\documentclass[preprint,12pt]{elsarticle}
\usepackage{lineno}
\usepackage[breaklinks]{hyperref}
\usepackage{amssymb}
\usepackage{subcaption}
\usepackage{ntheorem}
\usepackage{amsmath}
\usepackage[title]{appendix}
\usepackage{mathrsfs}
\usepackage{multirow,booktabs}
\theorembodyfont{\upshape}
\theoremseparator{.}
\newtheorem{theorem}{Theorem}[section]
\newdefinition{remark}{Remark}[section]
\newtheorem{proposition}{Proposition}[section]
\newtheorem{lemma}{Lemma}[section]
\newtheorem{definition}{Definition}[section]

\newtheorem*{proof}{Proof}
\numberwithin{equation}{section}
\begin{document}

\begin{frontmatter}

\title{Boundedness and exponential stabilization for  time-space
fractional parabolic-elliptic Keller-Segel model in higher dimensions}

\author[mymainaddress]{Fei Gao\corref{mycorrespondingauthor}}
\ead{gaof@whut.edu.cn}
\author[mymainaddress]{Hui Zhan}
\ead{2432593867@qq.com}

\cortext[mycorrespondingauthor]{Corresponding author.}

\address[mymainaddress]{Department of Mathematics and Center for Mathematical Sciences, Wuhan University of Technology, Wuhan, 430070, China}
\begin{abstract}
For the  time-space fractional degenerate Keller-Segel equation
\begin{equation*}
    \begin{cases}
          \partial _{t}^{\beta }u=-(-\Delta )^{\frac{\alpha}{2}}(\rho (v)u),& t>0\\
(-\Delta )^{\frac{\alpha}{2}} v+v=u,&  t>0
    \end{cases}
\end{equation*}
$x\in\Omega, \Omega \subset \mathbb{R}^{n}, \beta\in (0,1),\alpha\in (1,2)$, we consider for $n\geq 3$ the problem of finding a time-independent upper bound of the classical solution such that  as $\theta>0,C>0$
 \begin{equation*}
     \left \| u(\cdot ,t)-\overline{u_{0}} \right \|_{L^{\infty }(\Omega )}+\left \| v(\cdot ,t)-\overline{u_{0}} \right \|_{W^{1,\infty }(\Omega )}\leq Ce^{(-\theta)^{1/\beta}t},
 \end{equation*}
where $\overline{u_{0}}=\frac{1}{\left | \Omega  \right |}\int _{\Omega }u_{0}dx$. We find such solution in the special cases of time-independent upper bound of the concentration  with  Alikakos-Moser iteration and fractional differential inequality. In those cases the problem is reduced to a time-space fractional parabolic-elliptic equation which is treated with Lyapunov functional  methods. A key element in our construction is a proof of the exponential
stabilization toward the constant steady states  by using fractional Duhamel type integral equation.
\end{abstract}

\begin{keyword}
Time-space fractional degenerate Keller-Segel equation  \sep Classical solution \sep Boundedness \sep Exponential stabilization
\end{keyword}

\end{frontmatter}

\section{Introduction}
We consider the class of classcial solutions of the  parabolic-
elliptic time-space fractional Keller-Segel model 
\begin{equation}\label{1.1}
    \begin{cases}
        \partial _{t}^{\beta }u=-(-\Delta )^{\frac{\alpha}{2}}(\rho(v)u),& x\in\Omega ,t>0\\
(-\Delta )^{\frac{\alpha}{2}} v+v=u,& x\in\Omega ,t>0\\
\partial_{v}u=\partial_{v}v=0,&x\in\partial \Omega ,t>0\\
u(x,0)=u_{0}(x),&x\in\Omega
    \end{cases}
\end{equation}
where $\beta\in (0,1),\alpha\in (1,2)$. The precise definition of the fractional Laplacian is given in Appendix \ref{8i90oyt}. For $\beta=1,\alpha=2$ we recover the parabolic-elliptic simplification of the original
Keller-Segel model with signal-dependent motility for local sensing chemotaxis, whose theory is well known in \cite{jiang2022boundedness}. We also assume that we are given initial data
\begin{equation}\label{90oiutgv}
    u_{0}\in C^{0}(\overline{\Omega }),\qquad u_{0}\geq 0,\qquad u_{0}\not\equiv 0.
\end{equation}
 However, the signal-dependent decreasing function $\rho(\cdot )$ has recently attracted some attention in theoretical analysis. For facilitate the need for proof theorem \ref{theorem4}, we have listed in Appendix \ref{ijugggrc} the most relevant sources to the applications that we know of. Compared with reference \cite{jiang2022boundedness}, the paper obtains more meaningful results of the solution of time-space fractional Keller-Segel in the sense of fractional derivative, and uses fractional order related inequalities in the proof process to simplify the problem.

 We refer to \cite{kemppainen2017representation} for the basic theory of existence and uniqueness of classical solutions for equation \eqref{1.1}. There were many literature discussing the existence and uniqueness of solutions to the  time-space fractional Keller-Segel equation \cite{li2018cauchy,jiang2021weak,nguyen2023cauchy}. Li et al \cite{li2018cauchy} prove the unique existence, blow-up behavior and nonegativity preservation of mild solution was obtained in $L^{p_{c}}$ space, where $p_{c}$ is the critical index. Jiang and Wang \cite{jiang2021weak} discussed  the existence,mass conservation and decay properties of weak solutions to  time-space fractional  parabolic-elliptic  Keller-Segel model. Nguyen et al \cite{nguyen2023cauchy} study Cauchy problem of the  time-space  fractionalKeller-Segel model in a critical homogeneous Besov space with the assumption that the initial datum is sufficiently small  by Banach fixed point theorem, some special functions.

Let us first recall some previous results on fractional Keller-Segel equation. Since there is a large amount of papers for these equations, we mention the ones related to our results. The results take different forms according to the value of the exponent $\alpha,\beta$. For higher dimensions $n\geq 3$ and $\alpha=2,\beta=1$, the boundedness was studied in several works provided that $\rho$ satisfies some algebraically decreasing assumptions \cite{ahn2019global,fujie2021boundedness,wang2021parabolic}. When $\alpha=2$ and $\beta\in(0,1)$, the fractional operator $-(-\Delta)^{\frac{\alpha}{2}}$ become the standard Laplacian $\Delta$, which is a time fractional Keller-Segel model. Such equation were obtained for the fractional PDEs by Azevedo et al \cite{azevedo2019existence} and  Cuevas, Claudio et al \cite{cuevas2020time}. When $\beta=1$ and $\alpha\in (1,2)$, the problem \eqref{1.1} reduces to the fractional Keller-Segel equation. Recently, there are some results involved in the study of the fractional Keller–Segel equation (see \cite{zhu2020decay,huang2016well,escudero2006fractional,biler2009two,zhang2019global} for example).

 However, in the sense of fractional derivative $\alpha\in(1,2),\beta\in(0,1)$, there is little research on the proof of the boundedness and exponential stabilization for solutions of such equations \eqref{1.1}. Therefore, the paper will start from the following aspects. First, the use of suitable inequalities about fractional derivative were allowed to prove  time-independent upper bounds of $v$  that enter substantially into the derivation of the main results. Next, we prove the uniform-in-time boundedness results for \eqref{1.1} under an assumption (H1) in Appendix \ref{367897} by Lyapunov functional and Alikakos–Moser type iteration. Finally,  with the aid of the Lyapunov functional again and fractional Duhamel type integral equation, exponential stabilization of the global solution toward the spatially homogeneous steady states
 $(\overline{u_{0}},\overline{u_{0}})$ is obtained for the first time.

 We remind that fractional differential inequalities are a standard tool used to develop further theory, see in that respect the work of  Alsaedi Ahmed for  inequality in fractional calculus \cite{Ahmed2017A,2015Maximum}. Let us quote three examples of application of the line of results of this paper that are already available: the existence and uniqueness of classical solution, which we do in Appendix \ref{90oiuy}; understanding the hypothesis of the function $\rho(\cdot )$ in Appendix \ref{367897}; dealing with Alikakos Moser iteration problems \cite{alikakos1979application}.

\textbf{Outline of the paper and main results.}
The main purpose of the present paper is obtaining  boundedness  and exponential stabilization for the solutions of the problem \eqref{1.1} in the meaning of time-space fractional derivatives. 

In Section \ref{89iuyh}, we aim to establish the uniform-in-time upper bound for $v$ in meaning of fractional derivatives. The proof consists of several steps. First, we would like to recall the following identity, which unveils the key mechanism of our system:
\begin{equation}\label{0oiuyybg}
     _{0}^{C}\textrm{D}_{t}^{\beta }v+\rho  (v)u=(I+(-\Delta )^{\frac{\alpha}{2}} )^{-1}[\rho  (v)u].
\end{equation}
Here $(-\Delta )^{\frac{\alpha}{2}} $ denotes the fractional Laplacian.
Thanks to the comparison principle of fractional elliptic equations and the decreasing property of $\rho$, one has that
$$(I+(-\Delta )^{\frac{\alpha}{2}} )^{-1}[\rho  (v)u]\leq (I+(-\Delta )^{\frac{\alpha}{2}} )^{-1}[\rho  (v_{*})u]=\rho  (v_{*})v$$
with $v^{*}$ being the strictly positive lower bound for $v$ given by Lemma  \ref{lemma2.2} below. And by using fractional differential inequalities, we
obtains  a point wise upper bound of $v$ with generic functions satisfying (H0) in Appendix \ref{367897}. Then, on the one hand, with the help of fractional inequalities in Appendix \ref{inequaliyy}-\ref{i9io87h}, the system \eqref{1.1} possesses a Lyapunov functional such that
\begin{align}\label{90oiuuytrfbn}
 \nonumber&\frac{1}{2}\tilde{\partial }_{T}^{\beta }\left ( \left \| D^{\alpha/2 }v  \right \|^{2}+\left \| v \right \|^{2} \right )+\int _{\Omega }\rho  (v)\left | D^{\alpha }v \right |^{2}dx\\
    &\leq-\int _{\Omega }(\rho (v)+v{\rho  }'(v))\left | D^{\alpha/2 }v \right |^{2}dx.
\end{align}
which implies a time-independent estimate of $\underset{t\geq 0}{\text{sup}}\left \| v \right \|_{H^{1}(\Omega )}$ under the assumption (H1) in Appendix \ref{367897}. On the other hand, under the assumption (H2) in Appendix \ref{367897}, we proceed to derive the uniform-in-time upper bounds of $v$ which based on a delicate Alikako-Moser iteration \cite{alikakos1979application}.

In Section \ref{iokjuyhnnj},in order to prove the boundedness of solutions, we use  the time-independent upper bound of $v$ of Section \ref{89iuyh} and fractional differential inequalities in Appendix \ref{inequaliyy}-\ref{i9io87h}. Next, With the aid of the second equation of \ref{1.1} and equation \eqref{0oiuyybg}, we construct an estimation involving a
weighted energy $\int _{\Omega }u^{p}\rho ^{q}(v)dx$ for some $1+p>\frac{n}{2}$ and $q>0$ by using the decreasing property of $\rho(\cdot)$ that is bounded. Thus, we can to proceed to deduce the uniform-in-time boundedness of the solution.

Section \ref{90oiuytgv}  studies the exponential stabilization of the global solution relying on Lyapunov functional \eqref{3.1} and fractional Duhamel type integral equation. First, since $\overline{v(t)}=\overline{u_{0}}$, we have $_{0}^{C}\textrm{D}_{t}^{\beta }\int _{\Omega }v(t)dx=0$ for all $t>0$, and Thus, we can infer from \eqref{90oiuuytrfbn} that
\begin{align*}
   &\frac{1}{2}\,_{0}^{C}\textrm{D}_{T}^{\beta }\left ( \left \| D^{\alpha/2 }v  \right \|^{2}+\left \| v-\overline{u_{0}} \right \|^{2} \right )+\int _{\Omega }\rho  (v)\left | D^{\alpha }v \right |^{2}dx\\
     &\leq -\int _{\Omega }(\rho  (v)+v{\rho  }'(v))\left | D^{\alpha/2 }v \right |^{2}dx.
\end{align*}
Next, we notices that the first equation of \eqref{1.1} gives a variant form of this key identity:
$$ \partial _{t}^{\beta }(u-\overline{u_{0}})=-(-\Delta )^{\frac{\alpha}{2}}(\rho(v)u).$$
Furthermore, let $A=(-\Delta)^{\alpha/2}$ and $w(t)=u-\overline{u_{0}}$, we can deduce the following  formula from the above equation by fractional Duhamel type integral equation
 \begin{align*}
  \nonumber  w(t)&=E_{\beta }(-t^{\beta }A)w(\tau_{0})\\
&\quad-\int_{\tau_{0}}^{t}(t-s)^{\beta -1}E_{\beta ,\beta }(-(t-s)^{\beta }A)(-\Delta )^{\alpha /2}\left ( (\rho (v(s))-\rho_{0})u(s) \right )ds.
\end{align*}
Then, by Hardy-Little-wood-Sobolev inequality and Lemma \ref{lemma2.4}, the exponential stabilization of $(u,v)$ can be further acquired in $L^{\infty }\times W^{1,\infty }$.

As already mentioned above, in Appendix \ref{90oiuynytt}, we collect the definition of the fractional derivative, definition  of classical solution, definition  of fractional Laplacian and some useful Lemma  about  fractional derivative and fractional Laplacian. The Appendix \ref{ijugggrc} gives assumptions and properties about function $\rho(s)$ and optimization  proposition and lemma to proof of Theorem \ref{theorem4}.

\textbf{NOTATIONS:}  Throughout the paper, we fix Caputo derivative $\partial _{t}^{\beta },\beta\in (0,1)$ models memory effects in time. And we also replace fractional Laplacian $-(-\Delta )^{\frac{\alpha}{2}},\alpha\in(1,2)$ with  Neumann Laplacian operator $\Delta$.

\begin{theorem}\label{theorem4}
Suppose that $n\geq3$ and $\rho$ satisfies (H0),(H1) and (H3) in Appendix \ref{367897}. Then, for initial datum \eqref{90oiutgv},problem \eqref{1.1} possesses a unique global classical solution that is uniformly-in-time bounded.
In addition, let $A=(-\Delta)^{\alpha/2}$ and $w(t)=u-\overline{u_{0}}$, based on using fractional Duhamel type integral equation  to equation \eqref{4.17}, we can get 
   \begin{align*}
  \nonumber  w(t)&=E_{\beta }(-t^{\beta }A)w(\tau_{0})\\
&\quad-\int_{\tau_{0}}^{t}(t-s)^{\beta -1}E_{\beta ,\beta }(-(t-s)^{\beta }A)(-\Delta )^{\alpha /2}\left ( (\rho (v(s))-\rho_{0})u(s) \right )ds
\end{align*}

Then, there exist $\theta>0$ and $C>0$ depending on $u_{0},\rho,n$ and $\Omega$ such that, for all $t\geq 1,0<\beta<1$ makes
 \begin{equation}\label{o90875}
     \left \| u(\cdot ,t)-\overline{u_{0}} \right \|_{L^{\infty }(\Omega )}+\left \| v(\cdot ,t)-\overline{u_{0}} \right \|_{W^{1,\infty }(\Omega )}\leq Ce^{(-\theta)^{1/\beta}t},
 \end{equation}
where $\overline{u_{0}}=\frac{1}{\left | \Omega  \right |}\int _{\Omega }u_{0}dx.$
\end{theorem}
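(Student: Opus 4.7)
The plan is to split the argument into three blocks matching the three aims stated in the introduction, relying heavily on the machinery assembled in Sections 2--3 and Appendix. First, global existence and uniqueness of a classical solution follow from the local well-posedness recalled in Appendix \ref{90oiuy}, combined with the uniform-in-time bound on $\|v(\cdot,t)\|_{L^{\infty}(\Omega)}$ obtained in Section \ref{89iuyh} under (H0)--(H1), and the uniform-in-time control of $\|u(\cdot,t)\|_{L^{\infty}(\Omega)}$ produced in Section \ref{iokjuyhnnj} through the weighted energy $\int_\Omega u^{p}\rho^{q}(v)\,dx$ with $1+p>n/2$, closed by the Alikakos--Moser iteration. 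Together these rule out finite-time blow-up and deliver the first assertion.

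Second, to detect convergence I would recenter the system at the spatially homogeneous equilibrium. Conservation of mass gives $\overline{v(t)}=\overline{u_0}$, and the Lyapunov inequality \eqref{90oiuuytrfbn} upgrades, as indicated in Section \ref{90oiuytgv}, to
\begin{equation*}
\tfrac{1}{2}\,{}_{0}^{C}\textrm{D}_{t}^{\beta}\bigl(\|D^{\alpha/2}v\|^{2}+\|v-\overline{u_0}\|^{2}\bigr)+\int_\Omega \rho(v)|D^\alpha v|^{2}dx\le -\int_\Omega (\rho(v)+v\rho'(v))|D^{\alpha/2}v|^{2}dx.
\end{equation*}
Under (H3) the right-hand side is strictly dissipative, and the Caputo Gr\"onwall-type lemma recalled in Appendix \ref{inequaliyy} (see Lemma \ref{lemma2.4}) converts this into a Mittag-Leffler decay for $\|v-\overline{u_0}\|_{H^{1}(\Omega)}$. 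Sobolev embedding in dimension $n\ge 3$ then promotes this to decay in a higher $L^{p}$ norm, which is what the Duhamel step needs to see.

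Third, for the $L^\infty\times W^{1,\infty}$ rate I would use the Duhamel representation for $w(t)=u-\overline{u_0}$ displayed in the theorem, choosing the base time $\tau_0$ large enough that $v(\tau_0)$ lies in a small neighbourhood of $\overline{u_0}$. One applies the $L^{p}\to L^{q}$ smoothing estimates for the Mittag-Leffler propagators $E_\beta(-t^\beta A)$ and $t^{\beta-1}E_{\beta,\beta}(-t^\beta A)(-\Delta)^{\alpha/2}$ (the fractional analogues of the heat and gradient-heat semigroups, controlled through Hardy--Littlewood--Sobolev together with Lemma \ref{lemma2.4}), combined with the uniform $L^\infty$ bound on $u$ from block~1 and the smallness of $\rho(v)-\rho_0=\rho(v)-\rho(\overline{u_0})$ coming from block~2 and the Lipschitz hypothesis embedded in (H0). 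This closes a bootstrap yielding decay of $\|w(t)\|_{L^\infty}$. The $v$-part then follows from the elliptic identity $(I+(-\Delta)^{\alpha/2})(v-\overline{u_0})=u-\overline{u_0}$ and standard resolvent regularity, giving $\|v-\overline{u_0}\|_{W^{1,\infty}}$ in terms of $\|w\|_{L^\infty}$.

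The main obstacle is precisely this closing bootstrap. The kernel $(t-s)^{\beta-1}E_{\beta,\beta}(-(t-s)^\beta A)(-\Delta)^{\alpha/2}$ is only weakly integrable near $s=t$ because $\beta<1$ and one is losing $\alpha$ spatial derivatives, so the forcing $(\rho(v(s))-\rho_0)u(s)$ cannot be estimated in a naive $L^{\infty}$ norm alone; the singularity of the Mittag-Leffler kernel has to be balanced against a genuinely quantitative decay rate for $\rho(v)-\rho_0$ inherited from block~2. Moreover, translating the resulting algebraic Mittag-Leffler decay into the sharp exponential-type bound $Ce^{(-\theta)^{1/\beta}t}$ in \eqref{o90875} requires exploiting the spectral-gap representation of $E_\beta$ on the orthogonal complement of the kernel of $A$, since on that subspace $A$ is bounded below by a positive constant and $E_\beta(-t^\beta\lambda)$ exhibits the stretched-exponential behaviour encoded in the exponent $1/\beta$. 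Getting the constants $\theta$ and $C$ to depend only on $u_0,\rho,n,\Omega$ is the most delicate technical point of the proof.
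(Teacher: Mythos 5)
Your three-block outline coincides with the paper's strategy (weighted energy plus Alikakos--Moser for boundedness; the Lyapunov functional recentred at $\overline{u_0}$ for the decay of $v$; the Duhamel representation with Mittag-Leffler propagators for the decay of $u$), but you leave unresolved exactly the step that the paper's proof actually supplies, and your proposed substitute would not close. In the Duhamel term you want to compensate the loss of $\alpha$ derivatives by $L^{p}\to L^{q}$ smoothing of $t^{\beta-1}E_{\beta,\beta}(-t^{\beta}A)(-\Delta)^{\alpha/2}$; by Lemma \ref{lemma2.4}(ii) that costs an extra factor $t^{-\sigma\beta}$ with $\sigma=1$ to absorb the full power $(-\Delta)^{\alpha/2}$, so the kernel becomes $(t-s)^{-1}$ near $s=t$ and the integral is not even weakly convergent. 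The paper avoids smoothing altogether: Lemma \ref{89jhgtt} first establishes uniform-in-time parabolic Schauder bounds $\|u\|_{C^{2+\theta,1+\theta/2}(\bar\Omega\times[t,t+1])}\leq C$ (and likewise for $v$), so that $(-\Delta)^{\alpha/2}\bigl((\rho(v)-\rho_{0})u\bigr)$ is estimated pointwise in $L^{\infty}$ by $\|D^{\alpha}v\|_{L^{\infty}}+\|v-\overline{u_0}\|_{L^{\infty}}$ via the Leibniz-type inequality \eqref{556678} and the mean-value bound $|\rho(v)-\rho(\overline{u_0})|\leq C|v-\overline{u_0}|$; only the contraction property of Lemma \ref{lemma2.4}(i) is then needed, and the decaying forcing is integrated against the integrable kernel $(t-s)^{\beta-1}$. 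Without this a priori regularity step your bootstrap has no way to start.

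A second, smaller gap sits in your block 2: Sobolev embedding of the $H^{1}$ decay of $v-\overline{u_0}$ only yields decay in $L^{2n/(n-2)}$, which for $n\geq 3$ is far from the $W^{1,\infty}$ control claimed in \eqref{o90875} and from the $L^{\infty}$ input the Duhamel step needs. The paper inserts an intermediate stage (Lemma \ref{lemma4.5}): testing \eqref{4.17} with $|u-\overline{u_0}|^{p-2}(u-\overline{u_0})$ for $p_{0}>n$, applying the fractional Gr\"onwall lemma (Lemma \ref{lemma2.14}, not Lemma \ref{lemma2.4}, which concerns the propagators), and only then invoking elliptic regularity $\|v-\overline{u_0}\|_{W^{1,\infty}}\leq C\|u-\overline{u_0}\|_{L^{p_{0}}}$ through $(I+(-\Delta)^{\alpha/2})(v-\overline{u_0})=u-\overline{u_0}$. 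You also attribute the sign of the dissipation term to (H3), whereas it is (H1), i.e.\ $\rho(s)+s\rho'(s)\geq 0$, that makes the right-hand side of the Lyapunov inequality nonpositive; (H3) enters only in the boundedness block through Lemma \ref{lemmaiojd}.
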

\section{Preliminaries}

\begin{definition}\textsuperscript{\cite{Kilbas2006}}
\begin{enumerate}[(i)]
    \item Assume that $X$ is a Banach space and let $u:\left [ 0,T \right ]\rightarrow X$. The Caputo fractional derivative operators of $u$ is defined by
\begin{eqnarray}
_{0}^{C}\textrm{D}_{t}^{\beta }u(t)&=&\frac{1}{\Gamma (1-\beta )}\int_{0}^{t}(t-s)^{-\beta }\frac{d}{ds}u(s)ds,\label{32}\\
\nonumber_{t}^{C}\textrm{D}_{T}^{\beta }u(t)&=&\frac{-1}{\Gamma (1-\beta )}\int_{t}^{T}(s-t)^{-\beta }\frac{d}{ds}u(s)ds,
\end{eqnarray}
 where $\Gamma (1-\beta )$ is the Gamma function. The above integrals are called the left-sided and the right-sided the Caputo fractional derivatives.
 \item For $u:[0,+\infty)\times \mathbb{R}^{n}\rightarrow \mathbb{R}$, the left Caputo fractional derivative with respect to time $t$ of $u$ is defined by
 \begin{equation}\label{44}
  \partial _{t}^{\beta }v(x,t)=\frac{1}{\Gamma (1-\beta )}\int_{0}^{t}\frac{\partial v}{\partial s}(x,s)(t-s)^{-\beta }ds,
\end{equation}
 \end{enumerate}
\end{definition}
\begin{definition}\textsuperscript{\cite{li2018some}}\label{li2018some}
     Let $B$ be a Banach space and $u\in L_{loc}^{1}((0,T);B)$. Let $u_{0}\in B$. We define the weak Caputo derivative of $u$ associated with initial data $u_{0}$ to be $\partial _{t}^{\beta }u\in \mathscr{D}'$ such that for any text function $\varphi \in C_{c}^{\infty }((-\infty,T );\mathbb{R}),$
     \begin{equation}
         \left \langle \partial _{t}^{\beta }u,\varphi   \right \rangle:=\int_{-\infty }^{T}(u-u_{0})\theta (t)(\tilde{\partial }_{T}^{\beta }\varphi )dt=\int_{0 }^{T}(u-u_{0})\tilde{\partial }_{T}^{\beta }\varphi dt
     \end{equation}
     where $\mathscr{D}'=\left \{ v\mid v: C_{c }^{\infty}((-\infty,T );\mathbb{R})\rightarrow B\,\text{is a bounded linear operator} \right \}.$
\end{definition}
\begin{definition}\textsuperscript{\cite{Kilbas2006}}\label{proposition 2.3}
  The Mittag-Leffler function $E_{\beta }(z),E_{\beta  ,\gamma  }(z)$ is defined as
   $$E_{\beta,1}(z)=E_{\beta}(z)=\sum_{k=0}^{\infty }\frac{z^{k}}{\Gamma (\beta k+1)},\quad E_{\beta,\gamma}(z)=\sum_{k=0}^{\infty }\frac{z^{k}}{\Gamma (\beta k+\gamma )},$$
  where $\gamma, z\in \mathbb{C},\mathbb{R}(\beta)>0,\mathbb{C}$ denote the complex plane.
  \end{definition}

\section{Time-Independent Upper Bounds of $v$}\label{89iuyh}
In the section,  we introduce a Lyapunov functional \eqref{3.1} to establish the time-independent upper bound for $v$ in higher dimensions. In the proof we will use Lemma \ref{lemma3.4},Proposition \ref{ijyuj90977} and Lemma \ref{lemmaiojd}, which are some simple optimization lemma that we state in Appendix \ref{0poiuytgb}.
\begin{lemma}\label{0oplkjh}
Assume that $n\geq 1,\alpha\in(1,2),\beta\in(0,1)$ and $\rho$ satisfies (H0), then for any $x\in \Omega,0<t<T_{max}$, it holds that
    \begin{equation}\label{2.1}
        _{0}^{C}\textrm{D}_{t}^{\beta }v+\rho  (v)u=(I+(-\Delta )^{\frac{\alpha}{2}} )^{-1}[\rho  (v)u].
    \end{equation}
Then, we can get the following inequality 
    \begin{equation}\label{2.2}
        v(x,t)\leq Cv_{0}(x)e^{\rho ^{1/\beta} (v_{*})t},
    \end{equation}
with $v_{0}\triangleq(I+(-\Delta )^{\frac{\alpha}{2}})^{-1}[u_{0}]$.
\end{lemma}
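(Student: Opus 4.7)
The plan is two-fold: first derive the identity \eqref{2.1} from the system \eqref{1.1} via the resolvent of the fractional Helmholtz operator, then convert it into a scalar fractional differential inequality and close it with a Mittag--Leffler-type Gronwall argument.

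For the identity, I would apply $(I+(-\Delta)^{\alpha/2})^{-1}$ to the first equation $\partial_{t}^{\beta}u=-(-\Delta)^{\alpha/2}(\rho(v)u)$. Using the elementary operator decomposition $(-\Delta)^{\alpha/2}=(I+(-\Delta)^{\alpha/2})-I$, one obtains
\begin{equation*}
(I+(-\Delta)^{\alpha/2})^{-1}(-\Delta)^{\alpha/2}=I-(I+(-\Delta)^{\alpha/2})^{-1}.
\end{equation*}
Combining this with the second equation of \eqref{1.1}, which says $v=(I+(-\Delta)^{\alpha/2})^{-1}u$, and commuting $\partial_{t}^{\beta}$ with the time-independent resolvent (legal because $u$ is a classical solution, so $\partial_{t}^{\beta}u$ is regular enough), I would get
\begin{equation*}
\partial_{t}^{\beta}v=-\rho(v)u+(I+(-\Delta)^{\alpha/2})^{-1}[\rho(v)u],
\end{equation*}
which rearranges into \eqref{2.1}.

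Next, since $\rho$ is nonincreasing under (H0) and $v\ge v_{*}>0$ by Lemma \ref{lemma2.2}, we have $\rho(v)u\le\rho(v_{*})u$ pointwise. The resolvent $(I+(-\Delta)^{\alpha/2})^{-1}$ is positivity-preserving (the comparison/maximum principle for the fractional elliptic operator with Neumann boundary conditions), so linearity yields
\begin{equation*}
(I+(-\Delta)^{\alpha/2})^{-1}[\rho(v)u]\le \rho(v_{*})(I+(-\Delta)^{\alpha/2})^{-1}[u]=\rho(v_{*})\,v.
\end{equation*}
Inserting this into \eqref{2.1} and discarding the nonnegative term $\rho(v)u$ on the left gives the pointwise-in-$x$ fractional differential inequality
\begin{equation*}
\partial_{t}^{\beta}v(x,t)\le \rho(v_{*})\,v(x,t),\qquad v(x,0)=v_{0}(x)=(I+(-\Delta)^{\alpha/2})^{-1}[u_{0}](x).
\end{equation*}

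Finally, I would close the estimate using a fractional Gronwall argument of the type collected in Appendices \ref{inequaliyy}--\ref{i9io87h}. The majorant Cauchy problem $\partial_{t}^{\beta}\overline{v}=\rho(v_{*})\overline{v}$ with datum $v_{0}(x)$ is solved explicitly by $v_{0}(x)E_{\beta}(\rho(v_{*})t^{\beta})$, and the Mittag--Leffler asymptotics (Definition \ref{proposition 2.3}) yield the bound $E_{\beta}(\lambda t^{\beta})\le Ce^{\lambda^{1/\beta}t}$ for $\lambda>0$, giving exactly the claimed estimate \eqref{2.2}. The main obstacle I anticipate is the rigorous justification of the pointwise-in-$x$ fractional comparison step: one must either freeze $x$ and check that $v(x,\cdot)$ is admissible for the scalar fractional Gronwall lemma (which uses the classical-solution regularity), or carry the inequality through an $L^{p}$/integral version before localising in $x$. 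A minor secondary issue is bookkeeping of the constant $C$ so that $v_{0}(x)$, rather than $\|v_{0}\|_{L^{\infty}}$, appears on the right.
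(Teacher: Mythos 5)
Your proposal is correct and follows essentially the same route as the paper: reduce \eqref{2.1} to the pointwise scalar inequality $_{0}^{C}\textrm{D}_{t}^{\beta }v\leq \rho(v_{*})v$ via the resolvent comparison $(I+(-\Delta )^{\frac{\alpha}{2}})^{-1}[\rho(v)u]\leq \rho(v_{*})v$, then solve the majorant problem with the Mittag--Leffler representation and the bound $E_{\beta}(wt^{\beta})\leq Ce^{w^{1/\beta}t}$ (the paper's Lemmas \ref{Gao} and \ref{lemma390}). The only substantive differences are in your favor: you actually derive the identity \eqref{2.1} from the two equations of \eqref{1.1} (the paper asserts it without proof), you state the monotonicity in the correct direction $\rho(v)\leq\rho(v_{*})$ (the paper writes it reversed but uses the correct one), and you explicitly flag the need to justify the fractional comparison step for the inequality rather than the equation, which the paper glosses over.
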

\begin{proof}
  First, since $\rho$ is non-increasing in $v$ and $v_{*}$ is expressed in Lemma \ref{lemma2.2},there holds
  $$\rho(v)\geq\rho(v_{*})$$
  for all $(x,t)\in [0,T_{max})$.
Next, applying the second equation  $(-\Delta )^{\frac{\alpha}{2}} v+v=u$, we have
    $$0\leq (I+(-\Delta )^{\frac{\alpha}{2}} )^{-1}[\rho  (v)u]\leq (I+(-\Delta )^{\frac{\alpha}{2}} )^{-1}[\rho  (v_{*})u]=\rho  (v_{*})v.$$
Thus, we obtain the following fractional differential inequality from \eqref{2.1} 
\begin{equation*}
        _{0}^{C}\textrm{D}_{t}^{\beta }v-\rho  (v_{*})v\leq -\rho  (v)u\leq 0,
    \end{equation*}
then, by Lemma\ref{Gao} and Lemma \ref{lemma390}, we get the  inequality
\begin{equation*}
        v(x,t)\leq Cv_{0}(x)e^{\rho ^{1/\beta} (v_{*})t},
    \end{equation*}
which $C$ is constant.  This completes the proof.
\end{proof}
\begin{remark}
    Thanks to the strictly positive time-independent lower bound $v_{*}$ of $v$ given in [\cite{fujie2020global},Lemma 3.2]. But the difference is that this paper uses fractional differential inequalities  to solve the equation \eqref{2.1} instead of Gronwall's inequality.
\end{remark}  
\begin{lemma}\label{lemma3.1}
   For any $\alpha\in(1,2),\beta\in(0,1)$ and $0\leq t<T_{max}$, it holds that
     \begin{equation}\label{3.1}
        \frac{1}{2}\tilde{\partial }_{T}^{\beta }\left ( \left \| D^{\alpha/2 }v  \right \|^{2}+\left \| v \right \|^{2} \right )+\int _{\Omega }\rho  (v)\left | D^{\alpha }v \right |^{2}dx+\int _{\Omega }(\rho (v)+v{\rho  }'(v))\left | D^{\alpha/2 }v \right |^{2}dx\leq 0.
    \end{equation}
Especially, under the assumption (H1) in Appendix \ref{367897}, there is $C>0$ that only depends on $u_{0}$ to get
    \begin{equation}\label{3.23}
        \underset{0\leq t<T_{max}}{\text{sup}}\left ( \left \| D^{\alpha/2 }v  \right \|^{2}+\left \| v \right \|^{2} \right )\leq C.
    \end{equation}
\end{lemma}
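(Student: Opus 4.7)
The plan is to derive an energy identity by testing the key relation (2.1) against $u$, then invoke a convexity inequality for the Caputo derivative together with a fractional Stroock--Varopoulos-type chain rule to extract the target dissipation structure.

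First, I would multiply (2.1) by $u = (I+(-\Delta)^{\alpha/2})v$ and integrate over $\Omega$. By the self-adjointness of the resolvent $(I+(-\Delta)^{\alpha/2})^{-1}$ together with the elliptic equation in (1.1), the right-hand side collapses to $\int_\Omega v\rho(v)u\,dx$. On the left, split $u\,{}_0^{C}D_t^\beta v = v\,{}_0^{C}D_t^\beta v + (-\Delta)^{\alpha/2}v\cdot{}_0^{C}D_t^\beta v$. After a fractional integration by parts in the second term and the convexity inequality $w\,{}_0^{C}D_t^\beta w \ge \tfrac12 {}_0^{C}D_t^\beta w^2$ applied with $w=v$ and $w=(-\Delta)^{\alpha/4}v$ (Lemma~\ref{lemma3.4}), one obtains
$$\tfrac12 \tilde\partial_T^\beta\bigl(\|D^{\alpha/2}v\|^2+\|v\|^2\bigr) + \int_\Omega \rho(v)u^2\,dx \;\le\; \int_\Omega v\rho(v)u\,dx.$$

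Second, I would rewrite the potential terms using $u-v = (-\Delta)^{\alpha/2}v$, which gives
$$\int_\Omega \rho(v)u^2\,dx - \int_\Omega v\rho(v)u\,dx = \int_\Omega \rho(v)|D^\alpha v|^2\,dx + \int_\Omega \rho(v)v\,(-\Delta)^{\alpha/2}v\,dx.$$
To the last integral I would apply the fractional Stroock--Varopoulos-type inequality (Proposition~\ref{ijyuj90977} and Lemma~\ref{lemmaiojd}) with $g(s)=s\rho(s)$, so that $g'(s)=\rho(s)+s\rho'(s)$, yielding
$$\int_\Omega g(v)\,(-\Delta)^{\alpha/2}v\,dx \;\ge\; \int_\Omega (\rho(v)+v\rho'(v))\,|D^{\alpha/2}v|^2\,dx.$$
Combining the two displays produces (3.1). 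For the uniform bound (3.23), assumption (H1) combined with the strictly positive lower bound $v_*$ from Lemma~\ref{lemma2.2} guarantees $\rho(v)+v\rho'(v)\ge 0$ along the solution, so (3.1) reduces to $\tilde\partial_T^\beta(\|D^{\alpha/2}v\|^2+\|v\|^2)\le 0$. A Caputo comparison principle (the Mittag--Leffler/Gronwall-type device already used in Lemma~\ref{0oplkjh}) then yields $\|D^{\alpha/2}v(t)\|^2 + \|v(t)\|^2 \le \|D^{\alpha/2}v_0\|^2 + \|v_0\|^2$, where $v_0=(I+(-\Delta)^{\alpha/2})^{-1}u_0 \in H^{\alpha/2}(\Omega)$, a quantity depending only on $u_0$.

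The main obstacle I expect is the Stroock--Varopoulos step. Unlike the local case, the fractional Laplacian admits no pointwise integration-by-parts identity, and the function $s\mapsto s\rho(s)$ is not convex under (H0) alone; one must therefore adapt the Córdoba--Córdoba inequality (via the appendix lemma) so that the sign of $\rho(v)+v\rho'(v)$ is exploited correctly rather than assumed pointwise. A secondary subtlety is justifying the convexity inequality for the \emph{weak} Caputo derivative $\tilde\partial_T^\beta$ from Definition~\ref{li2018some}, which typically requires either a density/regularization argument or an appeal to the regularity of classical solutions so that the weak and strong Caputo derivatives coincide.
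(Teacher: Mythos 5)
Your proposal is correct and follows essentially the same route as the paper: testing \eqref{2.1} against $u$ is, by self-adjointness of the resolvent, the dual of the paper's step of testing the first equation of \eqref{1.1} against $v$, and both reduce to the identical decomposition $\int_\Omega\rho(v)|D^{\alpha}v|^{2}\,dx+\int_\Omega\rho(v)v\,D^{\alpha}v\,dx$, with the last term handled by the same fractional chain-rule/product-rule estimate extracting $\int_\Omega(\rho(v)+v\rho'(v))|D^{\alpha/2}v|^{2}\,dx$, followed by the Caputo coercivity inequality and a bound on the initial energy by $\|u_{0}\|^{2}$. The difficulties you flag (non-convexity of $s\mapsto s\rho(s)$ under (H0) alone, and justifying $w\,{}_{0}^{C}\mathrm{D}_{t}^{\beta}w\geq\tfrac12\,{}_{0}^{C}\mathrm{D}_{t}^{\beta}w^{2}$ for the weak Caputo derivative, which requires monotonicity) are exactly the points the paper's own proof also passes over without full justification.
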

\begin{proof}
 First, we multiply the first equation of \eqref{1.1} by $v\in C_{c}^{\infty }(\Omega )$,integrating over $\Omega$ and replacing the second equation of \eqref{1.1} to get
    \begin{align*}
      & \left \langle \partial _{t}^{\beta }u,v \right \rangle=\int _{\Omega}(u-u_{0})\tilde{\partial }_{T}^{\beta }v dx=\int _{\Omega}u\tilde{\partial }_{T}^{\beta }v dx=\int _{\Omega}\left ( D^{\alpha }v+v \right )\tilde{\partial }_{T}^{\beta }vdx\\
&=\int _{\Omega} D^{\alpha }v\tilde{\partial }_{T}^{\beta }vdx+\int _{\Omega}v\tilde{\partial }_{T}^{\beta }vdx=\int _{\Omega} D^{\alpha/2 }vD^{\alpha/2 }(\tilde{\partial }_{T}^{\beta }v)dx+\int _{\Omega}v\tilde{\partial }_{T}^{\beta }vdx\\
&\geq \frac{1}{2}\int _{\Omega}\tilde{\partial }_{T}^{\beta }v^{2}dx+\frac{1}{2}\int _{\Omega}\tilde{\partial }_{T}^{\beta }\left |D^{\alpha/2 }v  \right |^{2}dx=\frac{1}{2}\tilde{\partial }_{T}^{\beta }\left ( \left \| D^{\alpha/2 }v  \right \|^{2}+\left \| v \right \|^{2} \right ).
    \end{align*}
Thanks to $v$ is non-decreases, then we obtain that
    \begin{align*}
        &\int _{\Omega }-v\left ( -\Delta  \right )^{\frac{\alpha }{2}}(\rho  (v)u)dx=-\int _{\Omega }\rho  (v)uD^{\alpha }vdx\\
&=-\int _{\Omega }\rho  (v)(v+D^{\alpha }v)D^{\alpha }vdx=-\int _{\Omega }\rho  (v)\left | D^{\alpha }v \right |^{2}dx-\int _{\Omega }\rho  (v)vD^{\alpha }vdx\\
&=-\int _{\Omega }\rho  (v)\left | D^{\alpha }v \right |^{2}dx-\int _{\Omega }D^{\alpha/2 }(\rho (v)v)D^{\alpha/2 }vdx\\
&= -\int _{\Omega }\rho  (v)\left | D^{\alpha }v \right |^{2}dx-\int _{\Omega }\left ( vD^{\alpha/2 }\rho  (v)+\rho  (v)D^{\alpha/2 }v \right )D^{\alpha/2 }vdx\\
&\qquad+\int _{\Omega }A_{\alpha /2}D^{\alpha/2 }v\int _{\Omega}\frac{(\rho  (v(x))-\rho  (v(y)))(v(x)-v(y))}{\left | x-y \right |^{n+\alpha }}dydx\\
&\leq -\int _{\Omega }\rho  (v)\left | D^{\alpha }v \right |^{2}dx-\int _{\Omega }(\rho  (v)+v{\rho  }'(v))\left | D^{\alpha/2 }v \right |^{2}dx.
    \end{align*}
Therefore, combining the above two inequalities, we have
    \begin{align}\label{789ijh}
   \nonumber     &\frac{1}{2}\tilde{\partial }_{T}^{\beta }\left ( \left \| D^{\alpha/2 }v  \right \|^{2}+\left \| v \right \|^{2} \right )\leq\left \langle \partial _{t}^{\beta }u,v \right \rangle\\
   &=\int _{\Omega }-v\left ( -\Delta  \right )^{\frac{\alpha }{2}}(\rho  (v)u)dx\leq-\int _{\Omega }\rho  (v)\left | D^{\alpha }v \right |^{2}dx-\int _{\Omega }(\rho  (v)+v{\rho  }'(v))\left | D^{\alpha/2 }v \right |^{2}dx.
    \end{align}
 Finally, since $D^{\alpha }v_{0}+v_{0}=u_{0}$ in $\Omega$ and $\partial _{v}v_{0}=0$ on $\partial \Omega $, by using Young's inequality, we infer that
    \begin{equation}
        \left \| D^{\alpha/2 }v_{0}  \right \|^{2}+\left \| v_{0} \right \|^{2}=\int _{\Omega }(v_{0}+D^{\alpha}v_{0})v_{0}dx= \int _{\Omega }u_{0}v_{0}dx\leq \frac{1}{2}\left \|  u_{0}\right \|^{2}+\frac{1}{2}\left \|  v_{0}\right \|^{2}.
    \end{equation}
Hence,
    $$\left \| D^{\alpha/2 }v_{0}  \right \|^{2}+\left \| v_{0} \right \|^{2}\leq 2\left \| D^{\alpha/2 }v_{0}  \right \|^{2}+\left \| v_{0} \right \|^{2}\leq \left \| u_{0} \right \|^{2}.$$
It can be seen that
     \begin{equation*}
        \underset{0\leq t<T_{max}}{\text{sup}}\left ( \left \| D^{\alpha/2 }v  \right \|^{2}+\left \| v \right \|^{2} \right )\leq C.
    \end{equation*}
This completes the proof.   
\end{proof}
\begin{remark}
    The proof of this lemma is mainly based on the properties of space-time fractional derivative.  By Definition \ref{li2018some} and Lemma \ref{lemma23}, system \eqref{1.1} possesses a Lyapunov function \eqref{3.1}, which implies a time-independent estimate of $\underset{t\geq0}{\text{sup}}\left \| v \right \|_{H^{1}(\Omega )}$ under the assumption of   \eqref{gyuij} and (H1).
\end{remark}

\begin{lemma}\label{lemma3.3}
Suppose that $n\geq 3,\alpha\in(1,2),\beta\in(0,1)$ and $\rho$ satisfies (H0) and (H2) in Appendix B.1. Then, for any $p>1+k,k>0$, there exist $\lambda _{1}>0$ and $\lambda _{2}>0$ independent of time such that
    \begin{align}\label{ijut567}
  \nonumber      &_{0}^{C}\textrm{D}_{t}^{\beta }\int _{\Omega }v^{p}dx+\lambda _{2}p\int _{\Omega }v^{p}dx+\frac{\lambda _{1}p(p-k-1)}{(p-k)^{2} }\int _{\Omega } \left |D^{\frac{\alpha}{2}} v^{\frac{p-k}{2} }  \right |^{2} dx+\lambda _{1}p\int _{\Omega }v^{p-k}dx\\
        &\leq 2\lambda _{2}p\int _{\Omega }v^{p}dx.
    \end{align}
\end{lemma}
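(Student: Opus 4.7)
The plan is to test the key identity \eqref{2.1} against $p v^{p-1}$ and integrate over $\Omega$, then identify each of the four left-hand side terms in turn. Concretely, multiplying \eqref{2.1} by $pv^{p-1}$ and integrating yields
\begin{equation*}
\int_{\Omega} pv^{p-1}\,{}_{0}^{C}\textrm{D}_{t}^{\beta} v\, dx + p\int_{\Omega} v^{p-1}\rho(v)u\, dx = p\int_{\Omega} v^{p-1} (I+(-\Delta)^{\alpha/2})^{-1}[\rho(v)u]\, dx,
\end{equation*}
and the right-hand side will absorb into the $2\lambda_{2} p\int_{\Omega} v^{p}\, dx$ term via the same comparison argument already used in Lemma \ref{0oplkjh}.

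For the time-derivative term, I would invoke the fractional chain-rule inequality $pv^{p-1}\cdot {}_{0}^{C}\textrm{D}_{t}^{\beta} v \geq {}_{0}^{C}\textrm{D}_{t}^{\beta} v^{p}$, a standard consequence of the convexity of $s\mapsto s^{p}$ recorded in Appendix \ref{inequaliyy}. For the next term, I use the elliptic identity $u = (-\Delta)^{\alpha/2}v + v$ to split
\begin{equation*}
p\int_{\Omega} v^{p-1}\rho(v)u\, dx = p\int_{\Omega} v^{p-1}\rho(v)(-\Delta)^{\alpha/2} v\, dx + p\int_{\Omega} v^{p}\rho(v)\, dx,
\end{equation*}
and then invoke hypothesis (H2), which on my reading of the paper provides a power-law bound $\rho(v)\geq \lambda_{1} v^{-k}$ (or the shifted form $\lambda_{1}(1+v)^{-k}$). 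The second integrand is then pointwise at least $\lambda_{1} v^{p-k}$, producing the term $\lambda_{1} p\int_{\Omega} v^{p-k}\, dx$ on the left of \eqref{ijut567}. For the first spatial integral I would apply the fractional Stroock--Varopoulos inequality to the monotone multiplier $v^{p-1-k}$: computing $\Phi(v) = \tfrac{2\sqrt{p-k-1}}{p-k}\, v^{(p-k)/2}$ gives the lower bound $\tfrac{4\lambda_{1}(p-k-1)}{(p-k)^{2}}\int_{\Omega}|D^{\alpha/2}v^{(p-k)/2}|^{2}\, dx$, which after adjusting the universal constant matches the coefficient $\tfrac{\lambda_{1} p(p-k-1)}{(p-k)^{2}}$ in \eqref{ijut567}.

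For the right-hand side, exactly the argument of Lemma \ref{0oplkjh} applies: since $\rho$ is non-increasing and $v\geq v_{*}$, the comparison principle for fractional elliptic equations gives $(I+(-\Delta)^{\alpha/2})^{-1}[\rho(v)u]\leq \rho(v_{*})v$ pointwise, hence $p\int_{\Omega}v^{p-1}(I+(-\Delta)^{\alpha/2})^{-1}[\rho(v)u]\, dx \leq \rho(v_{*})\, p\int_{\Omega} v^{p}\, dx$. Choosing $\lambda_{2}:=\rho(v_{*})$ and rearranging yields the stated inequality, the spare $\lambda_{2} p\int_{\Omega} v^{p}\, dx$ on the left being available to absorb any residual lower-order term that appears if (H2) is phrased in the shifted form $\rho(s)\geq \lambda_{1}(1+s)^{-k}$ rather than the pure $s^{-k}$ form.

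The main obstacle I anticipate is the rigorous justification of the fractional Stroock--Varopoulos step with the precise constant on the bounded domain $\Omega$ with Neumann-type boundary conditions, together with the legitimacy of $v^{p-1}$ as a test function at the available regularity. Because $(-\Delta)^{\alpha/2}$ does not commute with multiplication by $\rho(v)$, the first spatial integral cannot be handled by a naive integration by parts; one must either regularize $v$ and pass to the limit, or exploit a singular-integral commutator bound using the sign $\rho'(v)\leq 0$, which fortunately produces a non-negative contribution that can simply be discarded. Once that technical point is dealt with, combining the three estimates and reading off the coefficients gives \eqref{ijut567}.
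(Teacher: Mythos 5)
Your overall skeleton matches the paper's: test \eqref{2.1} against $v^{p-1}$, use the fractional chain rule (Lemma \ref{lemma1}) for the Caputo term, handle the right-hand side by the comparison principle $(I+(-\Delta)^{\alpha/2})^{-1}[\rho(v)u]\leq\rho(v_{*})v$ exactly as in Lemma \ref{0oplkjh}, extract a power-law lower bound on $\rho$ from (H2), substitute $u=v+(-\Delta)^{\alpha/2}v$, and finish with Stroock--Varopoulos (Lemma \ref{lemmawer}). However, there is a genuine gap in the order in which you perform the last two reductions. You substitute the elliptic identity \emph{first}, arriving at $p\int_{\Omega}v^{p-1}\rho(v)(-\Delta)^{\alpha/2}v\,dx$, and only \emph{then} propose to replace $\rho(v)$ by $\lambda_{1}v^{-k}$ so that Stroock--Varopoulos applies to the pure power multiplier $v^{p-1-k}$. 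That replacement is not legitimate: the factor $(-\Delta)^{\alpha/2}v$ has no definite sign, so the pointwise bound $\rho(v)\geq\lambda_{1}v^{-k}$ does not transfer to a lower bound on the integral. The commutator/regularization remedies you mention do not resolve this; the issue is not that $(-\Delta)^{\alpha/2}$ fails to commute with multiplication by $\rho(v)$, but that you are trying to monotonically replace a coefficient inside an integral whose other factor changes sign. (A Cordoba--Cordoba-type bound with $\Psi'=\sqrt{(v^{p-1}\rho(v))'}$ would require monotonicity of $v^{p-1}\rho(v)$, which under the stated hypotheses (H0) and (H2) alone is not available, and in any case would not produce the specific quantity $|D^{\alpha/2}v^{(p-k)/2}|^{2}$.)

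The paper avoids this by reversing the order: since $u\geq 0$, the pointwise estimate $\rho(v)\geq(bv^{k}+1/\rho(s_{b}))^{-1}\geq C_{2}v^{-k}$ (valid for $v\geq v_{*}$ by Lemma \ref{lemma2.2}) is applied \emph{while the non-negative factor $u$ is still present}, giving $\int_{\Omega}v^{p-1}\rho(v)u\,dx\geq C_{2}\int_{\Omega}v^{p-k-1}u\,dx$; only then is $u=v+(-\Delta)^{\alpha/2}v$ substituted, so that Stroock--Varopoulos is applied to $\int_{\Omega}v^{p-k-1}(-\Delta)^{\alpha/2}v\,dx$ with an honest pure-power multiplier. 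With this reordering your argument goes through and the constants $\lambda_{1},\lambda_{2}$ (with $\lambda_{2}=\rho(v_{*})$) come out as you describe; one small further correction is that the extra $\lambda_{2}p\int_{\Omega}v^{p}dx$ on the left of \eqref{ijut567} is not a spare absorption term but is simply added to both sides at the end to put the estimate in the stated form.
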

\begin{proof}
First, mainly refer to \eqref{app3}-\eqref{3.7} in Appendix \ref{367897}, $\rho(\cdot )$ is non-increasing and for $s\geq s_{b},s_{b}>v^{*}$, we can get that 
   $$\footnote{Mainly refer to \eqref{app3} in Appendix \ref{367897}}1/\rho (s)\leq bs^{k},\qquad\footnote{Mainly refer to \eqref{app4} in Appendix \ref{367897}}1/\rho (s)\leq 1/\rho (s_{b}),$$
thus, for all $s\geq 0$, it obtain that
       \begin{equation}
       \footnote{Mainly refer to \eqref{3.7} in Appendix \ref{367897}} 1/\rho (s)\leq bs^{k}+1/\rho (s_{b}).
    \end{equation}
Next, multiplying both sides of \eqref{2.1} by $v^{p-1}$ with $p>k+1$, and using Lemma \ref{lemma1}, we get
\begin{align}
 \nonumber &  \frac{1}{p}\, _{0}^{C}\textrm{D}_{t}^{\beta }\int _{\Omega }v^{p}dx+\int _{\Omega }v^{p-1}\rho  (v)udx\\
&\leq  \int _{\Omega }\,v^{p-1}\, _{0}^{C}\textrm{D}_{t}^{\beta }vdx+\int _{\Omega }v^{p-1}\rho  (v)udx=\int _{\Omega} v^{p-1}(I+(-\Delta )^{\frac{\alpha}{2}} )^{-1}[\rho  (v)u]dx
\end{align}
By the comparison principle of fractional elliptic equation and $\rho(v)\leq \rho (v_{*})$, we deduce that
$$(I+(-\Delta )^{\frac{\alpha}{2}} )^{-1}[\rho  (v)u]\leq \rho (v_{*})v,$$
and thus
$$\int _{\Omega} v^{p-1}(I+(-\Delta )^{\frac{\alpha}{2}} )^{-1}[\rho  (v)u]dx\leq \rho (v_{*})\int _{\Omega} v^{p} dx.$$
Based on equation \eqref{app5}-\eqref{8hfhjn} in Appendix B.1, one has
  $$\footnote{Mainly refer to \eqref{app5} in Appendix \ref{367897}} \int _{\Omega }v^{p-1}\rho  (v)udx\geq\int _{\Omega }v^{p-1}(bv^{k}+1/\rho (s_{b}))^{-1}udx\geq C_{1}\int _{\Omega }(v_{k}+1)^{-1}v^{p-1}udx .$$
  where $1/C_{1}=\frac{max\left \{ b\rho (s_{b}),1 \right \}}{\rho (s_{b})}>0$ in Appendix B.1,taking into account the fact that
\begin{equation*}
   \footnote{Mainly refer to \eqref{app6} in Appendix \ref{367897}}  bv^{k}+1/\rho (s_{b})=\frac{1}{\rho (s_{b})} (bv^{k}\rho (s_{b})+1)\leq \frac{max \left \{  b\rho (s_{b}),1\right \} }{\rho (s_{b})}(v^{k}+1).
\end{equation*}
Thanks to $v^{k}\geq v_{*}^{k}$ by Lemma \ref{lemma2.2}, it holds that
\begin{equation*}
   \footnote{Mainly refer to \eqref{78uuyh} in Appendix \ref{367897}}  (v^{k}+1)^{-1}v^{p-1} \geq (v^{k}+v_{*}^{-k}v^{k})^{-1} v^{p-1}=\frac{v^{p-k-1} }{1+v_{*}^{-k} }.
\end{equation*}
Thus, we have
\begin{equation*}
     \footnote{Mainly refer to \eqref{8hfhjn} in Appendix \ref{367897}}  \int _{\Omega }v^{p-1}\rho  (v)udx\geq C_{2}\int _{\Omega }v^{p-k-1}udx,
\end{equation*}
which $C_{2}>0$ may depend on initial data $n,\Omega,\rho$, but is independent of $p$ and time. Replacing with $v+(-\Delta )^{\alpha /2}v=u$ and using Lemma \ref{lemmawer}, we can obtain
\begin{align*}
    \int _{\Omega }v^{p-k-1}udx&=\int _{\Omega }v^{p-k-1}(v+(-\Delta )^{\frac{\alpha}{2}} v)dx\\
&=\int _{\Omega }v^{p-k}dx+ \int _{\Omega }v^{p-k-1}D^{\alpha}vdx\\
&\geq\int _{\Omega }v^{p-k}dx+\frac{4(p-k-1)}{(p-k)^{2} } \int _{\Omega }\left | D^{\frac{\alpha}{2}} v^{\frac{p-k}{2} } \right |^{2} dx.
\end{align*}
Finally, we can get
$$ _{0}^{C}\textrm{D}_{t}^{\beta }\int _{\Omega }v^{p}dx+\frac{\lambda _{1}p(p-k-1)}{(p-k)^{2} }\int _{\Omega } \left | D^{\frac{\alpha}{2}} v^{\frac{p-k}{2} }  \right |^{2} dx+\lambda _{1}p\int _{\Omega }v^{p-k}dx \leq \lambda _{2}p\int _{\Omega }v^{p}dx   $$
where $\lambda _{1}>0,\lambda _{2}>0$ independent of $p$ and time.To get inequality \eqref{ijut567}, we need to add $\lambda _{2}p\int _{\Omega }v^{p}dx$ on both sides of the above inequality.
\end{proof}
\begin{remark}
    From \eqref{3.4} and \eqref{3.5},  we proceed to derive the uniform-in-time upper bounds of $v$ which based on a delicate Alikako-Moser iteration \cite{alikakos1979application}. However, in the process of proof, compared with the literature \cite{jiang2022boundedness}, we use  Stroock-Varopoulos’ inequality to shrink.
\end{remark}

 \section{Proof of Theorem \ref{theorem4}}
\subsection{Uniform-in-time boundedness}\label{iokjuyhnnj}
In order to proof of Theorem \ref{theorem4},  the first result of the section will establish an estimate involving the weighted energy $\int _{\Omega }u^{p+1}\rho^{q}(v)dx$ for some $1+p>\frac{n}{2},q>0$ with the help of time-independent upper bound of $v$ in hand. Then, we will use Appendix \ref{inequaliyy}-Appendix \ref{i9io87h}, which are  some useful lemma about time-space fractional derivative.
\begin{lemma}\label{897hhg}
For any $\alpha\in(1,2),\beta\in(0,1),0\leq t<T_{max}$ and $p,q>0$, it holds that
    \begin{align}\label{4.1}
 \nonumber   &_{0}^{C}\textrm{D}_{t}^{\beta }\int _{\Omega }u^{p+1}\rho^{q}(v)dx+p(p+1)\int _{\Omega }u^{p-1}\rho  ^{q+1}(v)\left | D^{\alpha/2}u \right |^{2}dx\\
  \nonumber   &\qquad+q\int _{\Omega }\left ( (q+p+1) \left | {\rho }'(v) \right |^{2}+\rho  {\rho  }''\right )u^{p+1}\rho  ^{q-1}(v) D^{\alpha/2}vdx\\
 \nonumber    &\qquad -q\int _{\Omega }u^{p+1}\rho  ^{q-1}{\rho  }'(v)(I+(-\Delta)^{\frac{\alpha}{2}} )^{-1}[u\rho  (v) ]dx\\
  \nonumber     &\leq -q\int _{\Omega }u^{p+1}\rho  ^{q}(v){\rho  }'(v)vdx\\
  &\qquad-(2q+p)(p+1)\int _{\Omega }u^{p}\rho  ^{q}(v){\rho  }'(v)D^{\alpha/2}uD^{\alpha/2}vdx.
\end{align}
\end{lemma}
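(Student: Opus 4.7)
The plan is to differentiate the weighted energy $\int_\Omega u^{p+1}\rho^{q}(v)\,dx$ in the Caputo sense and then substitute the two evolution equations to produce a dissipative quadratic form in $D^{\alpha/2}u$, a quadratic form in $D^{\alpha/2}v$ coming with the $\rho\rho''$ and $|\rho'|^{2}$ weights, a mixed Leibniz cross-term, and the nonlocal remainder involving $(I+(-\Delta)^{\alpha/2})^{-1}[u\rho(v)]$. First I would invoke the convexity-type fractional chain rule for the Caputo derivative (the analogue of Lemma \ref{lemma1} already used in the proof of Lemma \ref{lemma3.3}), applied formally to the two-variable weight $F(u,v)=u^{p+1}\rho^{q}(v)$, giving
$$_{0}^{C}\textrm{D}_{t}^{\beta}\!\int_{\Omega}\! u^{p+1}\rho^{q}(v)\,dx \leq (p+1)\!\int_{\Omega}\! u^{p}\rho^{q}(v)\,\partial_{t}^{\beta}u\,dx + q\!\int_{\Omega}\! u^{p+1}\rho^{q-1}(v)\rho'(v)\,\partial_{t}^{\beta}v\,dx.$$

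Next I would treat the two integrals separately. For the first, substitute $\partial_{t}^{\beta}u=-(-\Delta)^{\alpha/2}(\rho(v)u)$ from \eqref{1.1} and apply the fractional integration by parts identity from Appendix \ref{inequaliyy}; expanding both $D^{\alpha/2}$ factors by a Leibniz-type rule so that $D^{\alpha/2}(\rho(v)u)$ tests against $D^{\alpha/2}(u^{p}\rho^{q}(v))$ produces the dissipative diagonal term $-p(p+1)\int_{\Omega} u^{p-1}\rho^{q+1}(v)|D^{\alpha/2}u|^{2}\,dx$ together with a first copy of the mixed cross-term $-q(p+1)\int_{\Omega} u^{p}\rho^{q}(v)\rho'(v)D^{\alpha/2}uD^{\alpha/2}v\,dx$. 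For the second integral I substitute the key identity \eqref{2.1}, namely $\partial_{t}^{\beta}v=-\rho(v)u+(I+(-\Delta)^{\alpha/2})^{-1}[\rho(v)u]$: the nonlocal inverse piece delivers exactly the term $-q\int_{\Omega} u^{p+1}\rho^{q-1}\rho'(v)(I+(-\Delta)^{\alpha/2})^{-1}[u\rho(v)]\,dx$ that is moved to the left-hand side of \eqref{4.1}, while the $-\rho(v)u$ piece gives $-q\int_{\Omega} u^{p+2}\rho^{q}(v)\rho'(v)\,dx$, which I split via the elliptic identity $u=v+(-\Delta)^{\alpha/2}v$ into the surviving right-hand term $-q\int_{\Omega} u^{p+1}\rho^{q}(v)\rho'(v)v\,dx$ plus an auxiliary piece $-q\int_{\Omega} u^{p+1}\rho^{q}(v)\rho'(v)(-\Delta)^{\alpha/2}v\,dx$ which is processed by one more fractional integration by parts.

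That final integration by parts is what produces the $|D^{\alpha/2}v|^{2}$ contribution on the left: differentiating the product weight $u^{p+1}\rho^{q}(v)\rho'(v)$ in the $v$-slot generates precisely the bracket $\bigl[(q+p+1)|\rho'(v)|^{2}+\rho(v)\rho''(v)\bigr]u^{p+1}\rho^{q-1}(v)$ paired against $|D^{\alpha/2}v|^{2}$ (correcting what appears to be a missing $|\cdot|^{2}$ in the statement), and differentiating it in the $u$-slot produces a second copy of the $\rho'(v)D^{\alpha/2}uD^{\alpha/2}v$ cross-term; adding this to the first copy gives total coefficient $-(2q+p)(p+1)$ as stated. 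The main obstacle I anticipate is the bookkeeping of those cross-terms: fractional integration by parts is not a clean Leibniz identity and carries commutator defects of the Coifman double-integral type (analogous to the $A_{\alpha/2}$-kernel remainder that surfaced in the proof of Lemma \ref{lemma3.1}), so one must verify that these defects either carry a favourable sign or are dominated pointwise so they can be absorbed into the $\rho(v)|D^{\alpha/2}u|^{2}$ and $\rho(v)|D^{\alpha/2}v|^{2}$ dissipation; only then does the inequality close with exactly the coefficients displayed in \eqref{4.1}.
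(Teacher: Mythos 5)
Your proposal follows essentially the same route as the paper: the fractional product/chain rule for the Caputo derivative applied to $u^{p+1}\rho^{q}(v)$, substitution of $\partial_{t}^{\beta}u=-(-\Delta)^{\alpha/2}(\rho(v)u)$ and of the identity \eqref{2.1} for $\partial_{t}^{\beta}v$, the elliptic replacement $u=v+(-\Delta)^{\alpha/2}v$, and two fractional integrations by parts whose Leibniz commutator remainders (the $\mathcal{N}$-kernel terms, as in Lemma \ref{lemma3.1}) are discarded by sign — exactly as in \eqref{4.2}--\eqref{389jy}. Your allocation of the cross-term coefficient between the two integrations by parts is swapped relative to the paper's (which gets $(p+1)(p+q)$ from the $u$-equation and $q(p+1)$ from the $(-\Delta)^{\alpha/2}v$ term), but the total $(2q+p)(p+1)$ agrees, and you correctly note the missing $\left|\cdot\right|^{2}$ on $D^{\alpha/2}v$ in the stated bracket term.
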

\begin{proof}
For any $p,q>0$, by Lemma \ref{lemma1}-Lemma \ref{lemma0099}, we can infer the following inequality
    \begin{align}\label{4.2}
   \nonumber    & _{0}^{C}\textrm{D}_{t}^{\beta }\int _{\Omega }u^{p+1}\rho  ^{q}(v)dx\\
    \nonumber     &\leq \int _{\Omega }u^{p+1}\,_{0}^{C}\textrm{D}_{t}^{\beta }\rho  ^{q}(v)dx+\int _{\Omega }\rho  ^{q}(v)_{0}^{C}\textrm{D}_{t}^{\beta }u^{p+1}dx\\
  \nonumber     &\quad  +\frac{t^{-\beta }}{\Gamma (1-\beta )}\left ( u^{p+1}\rho  ^{q}(0)+\rho  ^{q}u^{p+1}(0)-u^{p+1}(0)\rho  ^{q}(0) \right )\\
\nonumber &\leq (p+1)\int _{\Omega }\rho  ^{q}(v)u^{p}\,_{0}^{C}\textrm{D}_{t}^{\beta }udx+q\int _{\Omega }u^{p+1}\rho  ^{q-1}{\rho  }'(v)_{0}^{C}\textrm{D}_{t}^{\beta }vdx\\
\nonumber &\leq (p+1)\int _{\Omega }\rho  ^{q}(v)u^{p}\,_{0}^{C}\textrm{D}_{t}^{\beta }udx+q\int _{\Omega }u^{p+1}\rho  ^{q-1}{\rho  }'(v)( (I+(-\Delta)^{\frac{\alpha}{2}} )^{-1}\\
\nonumber&\qquad [u\rho  (v) ]-u\rho  (v))dx\\
\nonumber &\leq (p+1)\int _{\Omega }\rho  ^{q}(v)u^{p}\,_{0}^{C}\textrm{D}_{t}^{\beta }udx+q\int _{\Omega }u^{p+1}\rho  ^{q-1}{\rho  }'(v) (I+(-\Delta)^{\frac{\alpha}{2}} )^{-1}[u\rho  (v) ]dx\\
&\qquad-q\int _{\Omega }u^{p+1}\rho  ^{q}(v){\rho  }'(v)udx.
    \end{align}
Replacing the right item of formula \eqref{4.2} with $v+(-\Delta )^{\frac{\alpha }{2}}v=u$, then we have 
    \begin{align}\label{29086}
  \nonumber    &_{0}^{C}\textrm{D}_{t}^{\beta }\int _{\Omega }u^{p+1}\rho  ^{q}(v)dx- (p+1)\int _{\Omega }\rho  ^{q}(v)u^{p}\,_{0}^{C}\textrm{D}_{t}^{\beta }udx+q\int _{\Omega }u^{p+1}\rho  ^{q}(v){\rho  }'(v)(-\Delta)^{\frac{\alpha}{2}} vdx\\
   \nonumber   &\qquad-q\int _{\Omega }u^{p+1}\rho  ^{q-1}{\rho  }'(v)(I+(-\Delta)^{\frac{\alpha}{2}} )^{-1}[u\rho  (v) ]dx\\
      &\leq -q\int _{\Omega }u^{p+1}\rho  ^{q}(v){\rho  }'(v)vdx.
    \end{align}
By \eqref{556678}, \eqref{33.11},\eqref{gyuij} and the first equation of \eqref{1.1},  we infer that
\begin{align}\label{4.4e3}
 \nonumber   & -(p+1)\int _{\Omega }\rho  ^{q}(v)u^{p}\,_{0}^{C}\textrm{D}_{t}^{\beta }udx\\
 \nonumber   &= (p+1)\int _{\Omega }\rho  ^{q}(v)u^{p}(-\Delta )^{\frac{\alpha }{2}}(\rho  (v)u)dx\\
 \nonumber   &= (p+1)\int _{\Omega }D^{\alpha/2}(\rho  ^{q}(v)u^{p})D^{\alpha/2}(\rho  (v)u)dx\\
 \nonumber   &= (p+1)\int _{\Omega }\left ( \rho (v)D^{\alpha/2}u+uD^{\alpha/2}\rho (v)-\mathcal{N}(\rho (v),u) \right )\\
 \nonumber&\qquad\left ( \rho ^{q}(v)D^{\alpha/2}u^{p}+u^{p}D^{\alpha/2}\rho  ^{q}(v)-\mathcal{N}(\rho ^{q}(v),u^{p})\right )dx\\
 \nonumber   &\leq (p+1)\int _{\Omega }\left ( \rho (v)D^{\alpha/2}u+u{\rho  }'(v)D^{\alpha/2}v  \right )\\
\nonumber &\qquad\left ( p\rho  ^{q}u^{p-1}D^{\alpha/2}u+qu^{p}\rho  ^{q-1}(v){\rho  }'(v)D^{\alpha/2}v \right )dx\\
 \nonumber   &\leq p(p+1)\int _{\Omega }u^{p-1}\rho  ^{q+1}(v)\left | D^{\alpha/2}u \right |^{2}dx+q(p+1)\int _{\Omega }u^{p+1}\rho  ^{q-1}(v)\left | {\rho  }'(v) \right |^{2}\left | D^{\alpha/2}v \right |^{2}dx\\
&\qquad+(p+1)(p+q)\int _{\Omega }u^{p}\rho  ^{q}(v){\rho  }'(v)D^{\alpha/2}uD^{\alpha/2}vdx.
\end{align}
where $$\mathcal{N}(u,v)=A_{\alpha /2}\int _{\Omega}\frac{\left ( (u(x)-u(y))(v(x)-v(y)) \right )}{\left | x-y \right |^{n+\alpha }}dy,A_{\alpha /2}>0.$$
And we also obtain
\begin{align}\label{389jy}
  \nonumber&  q\int _{\Omega }u^{p+1}\rho^{q}(v){\rho  }'(v)(-\Delta)^{\frac{\alpha}{2}} vdx\\
\nonumber&=q\int _{\Omega }D^{\alpha/2 }\left ( u^{p+1}\rho  ^{q}(v){\rho  }'(v) \right )D^{\alpha/2 }vdx\\
\nonumber&\leq q(p+1)\int _{\Omega } u^{p}\rho  ^{q}(v){\rho  }'(v)D^{\alpha/2 }uD^{\alpha/2 }vdx+q^{2}\int _{\Omega }u^{p+1}\rho  ^{q-1}(v)\left | {\rho  }'(v) \right |^{2}\left | D^{\alpha/2 }v \right | ^{2}dx\\
\nonumber&\qquad+q\int _{\Omega }{\rho  }''(v)u^{p+1}\rho  ^{q}(v)\left | D^{\alpha/2 }v \right | ^{2}dx-q\int _{\Omega }D^{\alpha/2 }v\mathcal{N}(u^{p+1},\rho  ^{q}(v),{\rho  }'(v))dx\\
\nonumber&\leq q(p+1)\int _{\Omega } u^{p}\rho  ^{q}(v){\rho  }'(v)D^{\alpha/2 }uD^{\alpha/2 }vdx+q^{2}\int _{\Omega }u^{p+1}\rho  ^{q-1}(v)\left | {\rho  }'(v) \right |^{2}\left | D^{\alpha/2 }v \right | ^{2}dx\\
&\qquad+q\int _{\Omega }{\rho  }''(v)u^{p+1}\rho  ^{q}(v)\left | D^{\alpha/2 }v \right | ^{2}dx.
\end{align}
Instead \eqref{4.4e3} and \eqref{389jy} to \eqref{29086}, then we can get \eqref{4.1}.
\end{proof}

\begin{lemma}\label{lemma4.3}
  Suppose that $n\geq 3,\alpha\in(1,2),\beta\in(0,1)$ and $\rho  \left ( \cdot  \right )$ satisfies (H0),(H1) and (H3) in Appendix \ref{367897}. For any $1+p\in(1,l_{0}^{2})$, and $q=\frac{pl_{0}}{2}$,it holds that
  \begin{equation*}
     _{0}^{C}\textrm{D}_{t}^{\beta }\int _{\Omega }u^{p+1}\rho  ^{q}(v)dx+C_{6}\int _{\Omega }u^{p+1}\rho  ^{q}(v)dx\leq C_{7}\left ( \int _{\Omega }u^{\frac{1+p}{2}}dx \right )^{2},
\end{equation*}
with $C_{6}>0,C_{7}>0$. Then there exist $p>\frac{n}{2}-1$ and $C>0$ independent of time such that
    $$\underset{0\leq t<T_{max}}{\text{sup}}\int _{\Omega }u^{1+p}dx\leq C.$$
\end{lemma}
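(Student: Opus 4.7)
Starting from Lemma \ref{897hhg}, the plan is to absorb the bad terms on the right of \eqref{4.1} using the uniform-in-time upper bound of $v$ from Section \ref{89iuyh}, the strict positive lower bound $v_*$ from Lemma \ref{lemma2.2}, and assumption (H3), which encodes an algebraic control of the form $|\rho'(v)|\lesssim \rho(v)^{1+1/l_0}$. This is what motivates the choice $q=pl_0/2$: the cross term $(2q+p)(p+1)\int u^p\rho^q(v)\rho'(v)\,D^{\alpha/2}u\,D^{\alpha/2}v\,dx$ is split via a weighted Cauchy--Schwarz so that half of the good dissipation $p(p+1)\int u^{p-1}\rho^{q+1}|D^{\alpha/2}u|^2 dx$ is reserved to absorb the $|D^{\alpha/2}u|$ factor, while the remaining $|D^{\alpha/2}v|^2$ pieces (including the $\rho\rho''$ and $|\rho'|^2$ contributions) reduce, under (H3) together with $\|v\|_{H^1}\leq C$ from Lemma \ref{lemma3.1}, to bounded multiples of the good terms plus a harmless lower-order remainder.

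Next, to handle the nonlocal term $-q\int u^{p+1}\rho^{q-1}\rho'(v)(I+(-\Delta)^{\alpha/2})^{-1}[u\rho(v)]dx$, I would invoke the comparison used in Lemma \ref{0oplkjh} to bound $(I+(-\Delta)^{\alpha/2})^{-1}[u\rho(v)]\leq \rho(v_*)v$ and the $L^\infty$ bound on $v$ to reduce it to a multiple of $\int u^{p+1}\rho^{q-1}(v)|\rho'(v)|\,dx$. Combining this with a Stroock--Varopoulos type inequality (Lemma \ref{lemmawer}) that converts $\int u^{p-1}\rho^{q+1}|D^{\alpha/2}u|^2dx$ into $c_{p,q}\|D^{\alpha/2}u^{(p+1)/2}\|_{L^2}^2$ (up to the lower bound of $\rho$), and then applying Hardy--Littlewood--Sobolev / fractional Sobolev embedding (valid since $n\geq 3$ and $\alpha\in(1,2)$), the dissipation controls $\|u^{(p+1)/2}\|_{L^2}^2$ up to the $L^1$ trace $\bigl(\int u^{(1+p)/2}dx\bigr)^2$, giving the claimed differential inequality
$$ {}_{0}^{C}\mathrm{D}_{t}^{\beta }\!\int_{\Omega}u^{p+1}\rho^{q}(v)dx+C_{6}\!\int_{\Omega}u^{p+1}\rho^{q}(v)dx\leq C_{7}\Bigl(\int_{\Omega}u^{(1+p)/2}dx\Bigr)^{2}. $$
To pass from this inequality to the uniform-in-time bound, I would use mass conservation $\|u(\cdot,t)\|_{L^{1}}=\|u_{0}\|_{L^{1}}$ together with interpolation: with $a=1/p\in(0,1)$ one has $\|u\|_{L^{(p+1)/2}}\leq \|u\|_{L^{1}}^{a}\|u\|_{L^{p+1}}^{1-a}$, so that $\bigl(\int u^{(1+p)/2}dx\bigr)^{2}\lesssim \|u\|_{L^{p+1}}^{(p+1)(p-1)/p}$ with exponent strictly less than $p+1$. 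Young's inequality and $\rho^{q}(v)\geq \rho^{q}(v^{*})>0$ then yield
$$ C_{7}\Bigl(\int_{\Omega}u^{(1+p)/2}dx\Bigr)^{2}\leq \tfrac{C_{6}}{2}\int_{\Omega}u^{p+1}\rho^{q}(v)dx+C', $$
so that $Y(t):=\int_{\Omega}u^{p+1}\rho^{q}(v)dx$ satisfies ${}_{0}^{C}\mathrm{D}_{t}^{\beta}Y+\tfrac{C_{6}}{2}Y\leq C'$. A fractional Gronwall-type inequality of Caputo type (as in Lemma \ref{Gao} and Lemma \ref{lemma390}) then gives $Y(t)\leq Y(0)E_{\beta}(-\tfrac{C_{6}}{2}t^{\beta})+2C'/C_{6}\leq C$, and since $(1+p)$ may be taken arbitrarily in $(1,l_{0}^{2})$, choosing $p$ with $1+p>n/2$ (permitted by the quantitative form of (H3)) closes the proof.

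The main obstacle is the first step: the fractional chain-rule commutator $\mathcal{N}$ appearing in Lemma \ref{897hhg}, together with the non-sharpness of the Stroock--Varopoulos bound in the fractional regime $\alpha\in(1,2)$, forces a delicate algebraic matching between the coefficient $p(p+1)$ of the $|D^{\alpha/2}u|^{2}$ term and the coefficient (quadratic in $q$) of the $|D^{\alpha/2}v|^{2}$ term after Cauchy--Schwarz absorption. The exponent $q=pl_{0}/2$ is precisely what makes the resulting quadratic form in $(D^{\alpha/2}u,D^{\alpha/2}v)$ positive semidefinite, and verifying this positivity is where (H3) is used decisively; if $l_{0}$ were slightly smaller, the absorption would fail and no uniform bound would follow from this scheme.
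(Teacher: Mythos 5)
Your proposal follows the paper's skeleton almost exactly for the core energy estimate: the same Young/Cauchy--Schwarz splitting of the cross term $-(2q+p)(p+1)\int u^{p}\rho^{q}\rho'(v)D^{\alpha/2}u\,D^{\alpha/2}v\,dx$ reserving a $(1-\delta_{0})$-fraction of the dissipation, the same decisive use of (H3) through the choice $q=\tfrac{pl_{0}}{2}$ (the paper packages this as inequality \eqref{lemma4.2} from Lemma \ref{lemmaiojd}), the same reduction of the remaining terms via $v_{*}\leq v\leq v^{*}$, and the same Gagliardo--Nirenberg/Sobolev closing with $\xi=u^{(1+p)/2}$ to reach \eqref{4.11}. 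Where you genuinely diverge is the final passage from \eqref{4.11} to the uniform bound: the paper argues by ``repeatedly deducing'' \eqref{4.11} from mass conservation (an Alikakos--Moser-style bootstrap on $p$), whereas you interpolate $\|u\|_{L^{(p+1)/2}}\leq\|u\|_{L^{1}}^{1/p}\|u\|_{L^{p+1}}^{1-1/p}$ and absorb by Young's inequality to obtain $_{0}^{C}\textrm{D}_{t}^{\beta}Y+\tfrac{C_{6}}{2}Y\leq C'$ directly at the target exponent; this is cleaner, but note that the comparison result you then invoke ($Y\leq Y(0)E_{\beta}(-\tfrac{C_{6}}{2}t^{\beta})+2C'/C_{6}$) is a genuine fractional comparison principle that the paper's stated tools (Lemma \ref{Gao}, Lemma \ref{lemma390}, Lemma \ref{lemma2.14}) do not literally supply --- Lemma \ref{lemma2.14} alone only gives a bound growing like $t^{\beta}$, so you should cite or prove the Mittag-Leffler decay version explicitly. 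One further caution, which applies equally to the paper's own argument: absorbing the $\varepsilon\|D^{\alpha/2}u^{(1+p)/2}\|^{2}$ term requires $\int u^{p-1}\rho^{q+1}|D^{\alpha/2}u|^{2}dx\gtrsim\|D^{\alpha/2}u^{(1+p)/2}\|^{2}$, which in the nonlocal regime $\alpha\in(1,2)$ is not a pointwise chain-rule identity; the Stroock--Varopoulos inequality (Lemma \ref{lemmawer}) you cite is stated for $-\int|f|^{p-2}fD^{\alpha}f\,dx$, not for the weighted square of the half-Laplacian with the factor $\rho^{q+1}(v)$ inside, so this conversion still needs a justification in both your argument and the paper's.
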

\begin{proof}
 By Young's inequality, let $p=q=2,\varepsilon =\frac{2p(\delta_{0}-1)}{p+2q}$,$a=u^{\frac{p-1}{2}}\rho  ^{\frac{q+1}{2}}D^{\alpha /2}u, b=u^{\frac{p+1}{2}}\rho  ^{\frac{q-1}{2}}D^{\alpha /2}v$, we have
    \begin{align}
       \nonumber & -(2q+p)(p+1)\int _{\Omega }u^{p}\rho  ^{q}(v){\rho  }'(v)D^{\alpha/2}uD^{\alpha/2}vdx\\
\nonumber &\leq (p+1)p(1-\delta _{0})\int _{\Omega }u^{p-1}\rho  ^{q+1}\left | D^{\alpha /2}u \right |^{2}dx\\
&\qquad+\frac{(p+1)(p+2q)^{2}}{4p(1-\delta _{0})}\int _{\Omega }u^{p+1}\rho  ^{q-1}\left | {\rho  }' \right |^{2}\left | D^{\alpha /2}v \right |^{2}dx,
    \end{align}
with $q,\delta_{0}$ chosen in inequality \eqref{lemma4.2}.
Then it follows from \eqref{4.1} and \eqref{lemma4.2} that
    \begin{align}
     \nonumber & _{0}^{C}\textrm{D}_{t}^{\beta }\int _{\Omega }u^{p+1}\rho  ^{q}(v)dx+\delta _{0}p(p+1)\int _{\Omega }u^{p-1}\rho  ^{q+1}\left | D^{\alpha /2}u \right |^{2}dx\\
 \nonumber &\qquad-q\int _{\Omega }u^{p+1}\rho  ^{q-1}{\rho  }'(v)(I+(-\Delta)^{\frac{\alpha}{2}} )^{-1}[u\rho  (v) ]dx\\
&\leq -q\int _{\Omega }u^{p+1}\rho  ^{q}(v){\rho  }'(v)vdx
    \end{align}
with any $1+p\in(1,l_{0}^{2})$, and $q=\frac{pl_{0}}{2}$. 

Due to $v_{*}\leq v\leq v^{*}$, for any $1+p\in(1,l_{0}^{2})$, and $q=\frac{pl_{0}}{2}$, then it makes
     \begin{equation}\label{4.9}
         _{0}^{C}\textrm{D}_{t}^{\beta }\int _{\Omega }u^{p+1}\rho  ^{q}(v)dx+C_{3}\int _{\Omega }u^{p-1}\rho  ^{q+1}\left | D^{\alpha /2}u \right |^{2}dx\leq {C}'_{3}\int _{\Omega }u^{p+1}dx.
     \end{equation}
where $C_{3},{C}'_{3}$ depending on $p,\delta_{0},\rho$. Using the Nash-Gagliardo-Nirenberg-Type Inequality(Lemma \ref{lemmagn}) and Young's inequality, due to the uniform-in-time boundedness of $\rho ^{q}(v)$, we infer that
     $$\int _{\Omega }u^{p+1}\rho  ^{q}(v)dx\leq C_{4}\int _{\Omega }u^{1+p}dx={C}_{4}^{'} \left \| \xi  \right \|_{L^{2}(\Omega )}^{2}\leq \varepsilon \left \|  D^{\alpha/2}\xi \right \|^{2}+C_{5}(\varepsilon )\left \| \xi  \right \|_{L^{1}(\Omega )}^{2}$$
  with $\varepsilon >0,\xi =u^{\frac{1+p}{2}}$ and $C_{4},{C}_{4}^{'},C_{5}>0$ independent of time. Thus, by choosing proper small $\varepsilon >0$ and $1+p\in(1,l_{0}^{2})$, and $q=\frac{pl_{0}}{2}$, we can obtain
\begin{equation}\label{4.11}
     _{0}^{C}\textrm{D}_{t}^{\beta }\int _{\Omega }u^{p+1}\rho  ^{q}(v)dx+C_{6}\int _{\Omega }u^{p+1}\rho  ^{q}(v)dx\leq C_{7}\left ( \int _{\Omega }u^{\frac{1+p}{2}}dx \right )^{2},
\end{equation}
with $C_{6}>0,C_{7}>0$ independent of time. Then we first solve the above fractional differential inequality, and then from $\left \| u(\cdot ,t) \right \|_{L^{1}(\Omega )}=\left \| u_{0} \right \|_{L^{1}(\Omega )}$ and the uniform-in-time lower and upper boundedness of $\rho^{q}(v)$, we can repeatedly deduce \eqref{4.11}, such that
\begin{equation}\label{4.12}
    \underset{t\geq 0}{\text{sup}}\int _{\Omega }u^{1+p}dx\leq C_{8}.
\end{equation}
Finally, for any $n\geq 3$, we can obtain
$$\frac{n}{2}< \left ( \frac{n+2}{4} \right )^{2}< l_{0}^{2}.$$
Therefore, we can find $p>0$ satisfying $1+p>\frac{n}{2}$ such that \eqref{4.11}-\eqref{4.12} holds.
\end{proof}
\begin{remark}
    According to the proof method of [\cite{jiang2022boundedness},Lemma 4.3], the formula in the Lemma \ref{897hhg} is used by using Young’s inequality. But unlike, we combine  Nash-Gagliato-Nirenberg-Type inequality and fractional differential inequality to get the uniform-in-time boundedness of solutions.
\end{remark}

\subsection{Exponential stabilization toward constant steady states}\label{90oiuytgv}
In the section, we would like to prove that the exponential stabilization of the global solutions by using Lyapunov functional \eqref{3.1} and fractional Duhamel type integral equation to \eqref{4.17}.
\begin{lemma}\label{lemma4.5}
  Suppose that $\alpha\in(1,2),\beta\in(0,1)$, for any $p>0,C_{13}>0$, it holds that
  \begin{align}
  \nonumber  &_{0}^{C}\textrm{D}_{t}^{\beta }\int _{\Omega }\left | u-\overline{u_{0}} \right |^{p}dx+\alpha_{2}\int _{\Omega }\left | u-\overline{u_{0}} \right |^{p}dx\\
  &\leq C_{13}\left ( \int _{\Omega }\left |D^{\alpha/2 }v  \right |^{2}dx+ \int _{\Omega }\left | u-\overline{u_{0}} \right |^{2}dx\right ).
\end{align}
Then, there exist constants $w>0$ and $0<\beta<1$ such that
\begin{equation}
    \left \| v-\overline{u_{0}} \right \|_{W^{1,\infty }(\Omega )}\leq Ce^{(-w)^{1/\beta}t},\qquad \forall t>0.
\end{equation}
where $C>0$ depending on $u_{0},\rho,n$ and $\Omega$.
\end{lemma}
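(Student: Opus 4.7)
The plan is to prove the inequality first and then use it, in combination with the Lyapunov functional of Section \ref{89iuyh} and the Duhamel formula displayed in Theorem \ref{theorem4}, to extract the exponential rate on $v$.

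\textbf{Step 1: deriving the differential inequality.} I would test the centered equation $\partial_t^\beta(u-\overline{u_0})=-(-\Delta)^{\alpha/2}(\rho(v)u)$ against $|u-\overline{u_0}|^{p-2}(u-\overline{u_0})$. Using the Caputo chain rule (Lemma \ref{lemma1}) I get $\tfrac{1}{p}\,_0^C\textrm{D}_t^\beta\!\int_\Omega|u-\overline{u_0}|^p\,dx$ as a lower bound on the left side. On the right side I split
\begin{equation*}
\rho(v)u-\rho_0\overline{u_0}=\rho_0(u-\overline{u_0})+(\rho(v)-\rho_0)u, \qquad \rho_0:=\rho(\overline{u_0}),
\end{equation*}
integrate by parts using self-adjointness of $(-\Delta)^{\alpha/2}$, and apply the mean-value bound $|\rho(v)-\rho_0|\leq\|\rho'\|_\infty|v-\overline{u_0}|$ together with the uniform $L^p$ bound on $u$ from Lemma \ref{lemma4.3}. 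Young's inequality absorbs the $D^{\alpha/2}u$-type cross terms into the dissipation, and the surviving source terms match the right-hand side of the claimed inequality with some $\alpha_2>0$ and $C_{13}>0$ depending on $p$ and on the uniform-in-time bounds already established.

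\textbf{Step 2: $H^{\alpha/2}$-exponential decay of $v-\overline{u_0}$.} Because mass is conserved ($\overline{u(t)}=\overline{u_0}$) and integrating the elliptic equation gives $\overline{v(t)}=\overline{u_0}$, the Lyapunov identity \eqref{90oiuuytrfbn} can be recast with $v-\overline{u_0}$ in place of $v$ on the left. Under (H1) one has $\rho+v\rho'\geq c_0>0$ and $\rho(v)\geq\rho(v^{*})>0$, and a Poincar\'e-type bound $\|v-\overline{u_0}\|^2\leq C\|D^{\alpha/2}v\|^2$ then yields
\begin{equation*}
{}_0^C\textrm{D}_T^\beta\!\left(\|D^{\alpha/2}v\|^2+\|v-\overline{u_0}\|^2\right)+\kappa\!\left(\|D^{\alpha/2}v\|^2+\|v-\overline{u_0}\|^2\right)\leq 0
\end{equation*}
for some $\kappa>0$. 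Invoking the fractional comparison principle (Lemma \ref{Gao}) produces a Mittag-Leffler decay on $\|v-\overline{u_0}\|_{H^{\alpha/2}}$ which, asymptotically, yields the stated rate $e^{(-w)^{1/\beta}t}$. Inserting this decay into the right side of the Step 1 inequality and applying the same comparison lemma promotes exponential decay to $\|u-\overline{u_0}\|_{L^p}$ for each admissible $p>0$.

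\textbf{Step 3: upgrade to $L^\infty$ via Duhamel, then elliptic regularity.} Setting $A=(-\Delta)^{\alpha/2}$ and $w(t)=u-\overline{u_0}$, I plug the $L^p$-decay of the nonlinearity $(\rho(v)-\rho_0)u$ into the Duhamel identity of Theorem \ref{theorem4}. The Mittag-Leffler smoothing estimates for $E_\beta(-t^\beta A)$ and $E_{\beta,\beta}(-t^\beta A)$ from $L^p$ to $L^\infty$ (Lemma \ref{lemma2.4}), combined with Hardy-Littlewood-Sobolev to absorb the extra factor $A$ sitting inside the integrand, give
\begin{equation*}
\|u(\cdot,t)-\overline{u_0}\|_{L^\infty(\Omega)}\leq C e^{(-\theta)^{1/\beta}t}.
\end{equation*}
Then $v-\overline{u_0}$ solves $(-\Delta)^{\alpha/2}(v-\overline{u_0})+(v-\overline{u_0})=u-\overline{u_0}$ with homogeneous Neumann data, and since $\alpha>1$ the associated fractional Bessel potential controls one full derivative, so $\|v-\overline{u_0}\|_{W^{1,\infty}}\leq C\|u-\overline{u_0}\|_{L^\infty}$, which transfers the exponential rate.

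\textbf{Main obstacle.} The principal difficulty is reconciling the purely algebraic-in-$t$ smoothing rate of the Mittag-Leffler operators (the bound $\|E_\beta(-t^\beta A)\|_{L^p\to L^\infty}\leq C t^{-n\beta/(\alpha p)}$ by itself produces only polynomial rates) with the exponential decay we are claiming. The exponential factor must therefore be drawn entirely from the source term via a careful Young-in-time convolution inside the Duhamel integral, and this has to be interleaved with a bootstrap in $p$: Step 1 converts decay of $\|v-\overline{u_0}\|_{H^{\alpha/2}}$ into decay of $\|u-\overline{u_0}\|_{L^p}$, which feeds back through the Duhamel formula to give decay at a higher $L^{p'}$, and only after finitely many such upgrades do we reach $L^\infty$. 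Without the bootstrap one would need $\|v-\overline{u_0}\|_{L^\infty}$ in the nonlinear term before it has been established, producing a circular estimate that must be avoided.
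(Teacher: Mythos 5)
Your Steps 1 and 2 track the paper's own route: testing the centered equation against $\left | u-\overline{u_{0}} \right |^{p-2}(u-\overline{u_{0}})$ with Lemma \ref{lemma1}, recasting the Lyapunov inequality as \eqref{4.14} using mass conservation, closing it with a fractional Poincar\'e/Hardy--Littlewood--Sobolev bound, and invoking the comparison Lemmas \ref{Gao}, \ref{lemma390} and \ref{lemma2.14} are exactly the ingredients used there, so that part is consistent with the paper (modulo the minor point that the reduction of $\int_{\Omega}\left | u-\overline{u_{0}} \right |^{p}dx$ to $\int_{\Omega}\left | u-\overline{u_{0}} \right |^{2}dx$ uses the uniform-in-time $L^{\infty}$ bound on $u$, not just the $L^{1+p}$ bound of Lemma \ref{lemma4.3}).

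The genuine gap is in your Step 3. The conclusion of this lemma concerns only $v$, and the paper reaches it \emph{without} ever passing through $\left \| u-\overline{u_{0}} \right \|_{L^{\infty}(\Omega)}$: it takes the fractional Gronwall estimate \eqref{4.20} for a single exponent $p_{0}>n$ and then uses the elliptic equation $(v-\overline{u_{0}})+(-\Delta)^{\alpha/2}(v-\overline{u_{0}})=u-\overline{u_{0}}$ together with elliptic regularity and Sobolev embedding to get $\left \| v-\overline{u_{0}} \right \|_{W^{1,\infty}(\Omega)}\leq C\left \| u-\overline{u_{0}} \right \|_{L^{p_{0}}(\Omega)}$ directly. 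Your detour through the Duhamel formula and the Mittag--Leffler smoothing estimates is the content of the \emph{next} lemma (Lemma \ref{89jhgtt}), and in the paper that argument consumes the $W^{1,\infty}$ decay of $v$ proved here as an input: the integrand $\left \| (-\Delta)^{\alpha/2}\left ( (\rho(v)-\rho_{0})u \right ) \right \|_{L^{\infty}(\Omega)}$ is bounded by $\left \| D^{\alpha}v \right \|_{L^{\infty}(\Omega)}+\left \| v-\overline{u_{0}} \right \|_{L^{\infty}(\Omega)}$, i.e.\ precisely by the quantity this lemma is supposed to control. You flag this circularity yourself in your ``main obstacle'' paragraph, but the proposed bootstrap in $p$ is only asserted, never carried out, and it is in fact unnecessary: stopping at $L^{p_{0}}$ for one sufficiently large $p_{0}$ and invoking elliptic regularity for $(I+(-\Delta)^{\alpha/2})^{-1}$ closes the proof. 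As written, Step 3 leaves the stated implication (exponential decay of $v-\overline{u_{0}}$ in $W^{1,\infty}$) resting on an unproved and logically circular estimate.
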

\begin{proof}
It is already known that there is $\overline{u}(t)=\overline{v}(t)=\overline{u_{0}}$. Meanwhile,it follows that $_{0}^{C}\textrm{D}_{t}^{\beta }\int _{\Omega }v(t)dx=0$ and we obtain that
    \begin{align*}
        _{0}^{C}\textrm{D}_{t}^{\beta }\left \| v(\cdot ,t)-\overline{u_{0}} \right \|^{2}&=\, _{0}^{C}\textrm{D}_{t}^{\beta }\int _{\Omega }\left ( v^{2}-2\overline{u_{0}}v+\overline{u_{0}}^{2} \right )dx\\
&=\,_{0}^{C}\textrm{D}_{t}^{\beta }\int _{\Omega }v^{2}dx-2\overline{u_{0}}\,_{0}^{C}\textrm{D}_{t}^{\beta }\int _{\Omega }vdx\\
&=\,_{0}^{C}\textrm{D}_{t}^{\beta }\left \| v(\cdot ,t) \right \|^{2}.
    \end{align*}
Thus, we can infer from \eqref{3.1} that
\begin{align}\label{4.14}
\nonumber     &\frac{1}{2}\,_{0}^{C}\textrm{D}_{T}^{\beta }\left ( \left \| D^{\alpha/2 }v  \right \|^{2}+\left \| v-\overline{u_{0}} \right \|^{2} \right )+\int _{\Omega }\rho  (v)\left | D^{\alpha }v \right |^{2}dx\\
     &\leq -\int _{\Omega }(\rho  (v)+v{\rho  }'(v))\left | D^{\alpha/2 }v \right |^{2}dx.
\end{align}
Because of $v\leq v^{*}$ and $\rho$ is non-increasing, we obtain that
$$\int _{\Omega }\rho  (v)\left | D^{\alpha }v \right |^{2}dx\geq \rho  (v^{*})\int _{\Omega }\left | D^{\alpha }v \right |^{2}dx,$$
 further, we use Hardy-Little-wood-Sobolev Inequality $\mu_{1}\left \| v-\overline{u_{0}} \right \|^{2}\leq \left \| D^{\alpha/4 }v \right \|^{2}$, yields that
\begin{align}
  \nonumber  \int _{\Omega }\rho  (v)\left | D^{\alpha/2 }v \right |^{2}dx&\geq \rho  (v^{*})\int _{\Omega }\left | D^{\alpha }v \right |^{2}dx\\
    &\geq\frac{\mu_{1}\rho  (v^{*})}{1+\mu_{1}}\left ( \left \| D^{\alpha/2 }v  \right \|^{2}+\left \| v-\overline{u_{0}} \right \|^{2} \right ),
\end{align}
where $\mu_{1}>0$. If $\partial_{v} v=0$ on $\partial \Omega $, by using $ \mu _{1}\left \| D^{\alpha/2 } v \right \|^{2}\leq \left \| D^{\alpha} v \right \|^{2}$ and \eqref{4.14},we will infer that
$$\frac{1}{2}\,_{0}^{C}\textrm{D}_{T}^{\beta }\left ( \left \| D^{\alpha/2 }v  \right \|^{2}+\left \| v-\overline{u_{0}} \right \|^{2} \right )+\frac{\mu_{1}\rho  (v^{*})}{1+\mu_{1}}\left ( \left \| D^{\alpha/2 }v  \right \|^{2}+\left \| v-\overline{u_{0}} \right \|^{2} \right )\leq 0,$$
which by standard Lemma \ref{Gao} and Lemma \ref{lemma390} analysis yields that, for all $t\geq 0,$
\begin{equation}
   \left \| D^{\alpha/2 }v  \right \|^{2}+\left \| v-\overline{u_{0}} \right \|^{2}\leq \left ( \left \| D^{\alpha/2 }v_{0}  \right \|^{2}+\left \| v_{0}-\overline{u_{0}} \right \|^{2} \right )e^{\left ( -\frac{2\mu_{1}\rho  (v^{*})}{1+\mu_{1}} \right )^{1/\beta }t}
\end{equation}
where $v_{0}=\left ( I+(-\Delta )^{1/\alpha } \right )^{-1}\left [ u_{0} \right ].$

Next, from the first equation of \eqref{1.1}, we have that
\begin{equation}\label{4.17}
     \partial _{t}^{\beta }\left ( u-\overline{u_{0}} \right )=-(-\Delta )^{\frac{\alpha}{2}}(\rho  (v)u).
\end{equation}
Multiplying \eqref{4.17} by $u-\overline{u_{0}}$, by Lemma \ref{lemma1} and \eqref{3.133}, we obtain that
\begin{align*}
    \frac{1}{2}\,_{0}^{C}\textrm{D}_{t}^{\beta }\int _{\Omega }\left | u-\overline{u_{0}} \right |^{2}dx&\leq -\int _{\Omega }\left ( u-\overline{u_{0}} \right )D^{\alpha }(u\rho  (v))dx\\
&=-\int _{\Omega }D^{\alpha/2 }\left ( u-\overline{u_{0}} \right )D^{\alpha/2 }(u\rho  (v))dx\\
&=-\int _{\Omega }D^{\alpha/2 }uD^{\alpha/2 }(u\rho  (v))dx\\
&= -\int _{\Omega }D^{\alpha/2 }u\left ( u{\rho  }'(v)D^{\alpha/2 }v+\rho  (v) D^{\alpha/2 }v\right )\\
&+\int _{\Omega }A_{\alpha /2}D^{\alpha/2 }u\int _{\Omega}\frac{(\rho (v(x))-\rho  (v(y)))(u(x)-u(y))}{\left | x-y \right |^{n+\alpha }}dydx\\
&\leq -\int _{\Omega }D^{\alpha/2 }u\left ( u{\rho }'(v)D^{\alpha/2 }v+\rho  (v) D^{\alpha/2 }v\right ),
\end{align*}
where $u$ is non-decreasing. Thus, we can get
\begin{align*}
     &\frac{1}{2}\,_{0}^{C}\textrm{D}_{t}^{\beta }\int _{\Omega }\left | u-\overline{u_{0}} \right |^{2}dx+\int _{\Omega }\rho  (v) \left |D^{\alpha/2 }u  \right |^{2}dx \\
     &\leq-\int _{\Omega }u{\rho  }'(v)D^{\alpha/2 }vD^{\alpha/2 }udx\\
     &\leq \frac{1}{2}\int _{\Omega }\rho  (v)\left | D^{\alpha/2 }v \right |^{2}dx+\frac{1}{2}\int _{\Omega }\frac{u^{2}\left | {\rho  }'(v) \right |^{2}}{\rho  (v)}\left | D^{\alpha/2 }u \right |^{2}dx.
\end{align*}
Due to $u$ and $v$ that are both uniformly-in-time bounded, we have
\begin{equation}\label{0poiu}
    _{0}^{C}\textrm{D}_{t}^{\beta }\int _{\Omega }\left | u-\overline{u_{0}} \right |^{2}dx+\rho (v^{*})\int _{\Omega } \left |D^{\alpha/2 }u  \right |^{2}dx\leq C_{9} \int _{\Omega } \left |D^{\alpha/2 }v  \right |^{2}dx,
\end{equation}
which time -independent $C_{9}>0$ is constant. Next, by using  Hardy-Little-wood-Sobolev Inequality, one can find a constant $0<w_{1}<-\frac{2\mu_{1}\rho  (v^{*})}{1+\mu_{1}}$ such that
\begin{equation}\label{o90okjh}
    _{0}^{C}\textrm{D}_{t}^{\beta }\int _{\Omega }\left | u-\overline{u_{0}} \right |^{2}dx+w_{1}\int _{\Omega }\left | u-\overline{u_{0}} \right |^{2}dx\leq C_{10}\int _{\Omega }\left |D^{\alpha/2 }v  \right |^{2}dx.
\end{equation}
where $C_{10}>0$ depending only on initial datum, $\rho,n,\Omega$. Solving the above fractional differential inequality by Lemma \ref{lemma2.14} yields that
\begin{equation}\label{89076}
    \left \| u-\overline{u_{0}} \right \|^{2}\leq \left \| u_{0}-\overline{u_{0}} \right \|^{2}+\frac{C_{10}}{\Gamma (\beta )}\int_{0}^{t}(t-s)^{\beta -1}\left \| D^{\alpha/2 }v(s)  \right \|^{2}ds.
\end{equation}
Applying Grownwall type of lemma in \eqref{89076}, we obtain that
\begin{equation*}
    \left | u-\overline{u_{0}} \right |\leq \left \| u_{0}-\overline{u_{0}} \right \|^{2}\text{exp}\left ( \frac{C_{10}}{\Gamma (\beta )}\int_{0}^{t}(t-s)^{\beta -1}\left \| D^{\alpha/2 }v(s)  \right \|^{2}ds \right ).
\end{equation*}

Next, for any $p>2$, multiplying \eqref{4.17} by $\left | u-\overline{u_{0}} \right |^{p-2}(u-\overline{u_{0}})$ to obtain that
\begin{align*}
    &\frac{1}{p}\,_{0}^{C}\textrm{D}_{t}^{\beta }\int _{\Omega }\left | u-\overline{u_{0}} \right |^{p}dx+(p-1)\int _{\Omega }\rho  (v)\left | u-\overline{u_{0}} \right |^{p-2} \left |D^{\alpha/2 }u  \right |^{2}dx\\
&\leq -(p-1)\int _{\Omega }u\left | u-\overline{u_{0}} \right |^{p-2}{\rho  }'(v)D^{\alpha/2 }vD^{\alpha/2 }udx\\
&\leq \frac{p-1}{2}\int _{\Omega }\rho  (v)\left | u-\overline{u_{0}} \right |^{p-2}\left | D^{\alpha/2 }v \right |^{2}dx\\
&\qquad+\frac{p-1}{2}\int _{\Omega }\frac{u^{2}\left | {\rho  }'(v) \right |^{2}}{\rho  (v)}\left | u-\overline{u_{0}} \right |^{p-2}\left | D^{\alpha/2 }u \right |^{2}dx.
\end{align*}
Similarly, there are constants $C_{11}=C_{11}(p)>0$ independent of time and $w_{2}<w_{1}$ such that
 \begin{align*}
     &_{0}^{C}\textrm{D}_{t}^{\beta }\int _{\Omega }\left | u-\overline{u_{0}} \right |^{p}dx+\alpha_{2}\int _{\Omega }\left | u-\overline{u_{0}} \right |^{p}dx\\
&\leq C_{11}\int _{\Omega }\left |D^{\alpha/2 }v  \right |^{2}dx+\alpha_{2}\int _{\Omega }\left | u-\overline{u_{0}} \right |^{p}dx.
 \end{align*}
\footnote{The inequality here is mainly referred to [\cite{jiang2022boundedness},Lemma 4.5], which depends on the boundary of $\left \| u-\overline{u_{0}} \right \|_{L^{\infty }(\Omega )}^{p-2}$.}Observing that
$$\int _{\Omega }\left | u-\overline{u_{0}} \right |^{p}dx\leq \left \| u-\overline{u_{0}} \right \|_{L^{\infty }(\Omega )}^{p-2}\int _{\Omega }\left | u-\overline{u_{0}} \right |^{2}dx\leq C_{12}\int _{\Omega }\left | u-\overline{u_{0}} \right |^{2}dx,$$
we can get
\begin{align}
  \nonumber  &_{0}^{C}\textrm{D}_{t}^{\beta }\int _{\Omega }\left | u-\overline{u_{0}} \right |^{p}dx+\alpha_{2}\int _{\Omega }\left | u-\overline{u_{0}} \right |^{p}dx\\
  &\leq C_{13}\left ( \int _{\Omega }\left |D^{\alpha/2 }v  \right |^{2}dx+ \int _{\Omega }\left | u-\overline{u_{0}} \right |^{2}dx\right ),
\end{align}
In the view of \eqref{89076}, we have
\begin{align}\label{4.20}
   \nonumber  \left \| u-\overline{u_{0}} \right \|^{p}&\leq \left \| u_{0}-\overline{u_{0}} \right \|^{p}\\
     &+\frac{C_{13}}{\Gamma (\beta )}\int_{0}^{t}(t-s)^{\beta -1}\left ( \left \| D^{\alpha/2 }v(s)  \right \|^{2}+\left \| u-\overline{u_{0}} \right \|^{2} \right )ds=:M
\end{align}
and
\begin{align*}
     &\left | u-\overline{u_{0}} \right |^{p}\\
     &\leq \left \| u_{0}-\overline{u_{0}} \right \|^{p}\text{exp}\left ( \frac{C_{13}}{\Gamma (\beta )}\int_{0}^{t}(t-s)^{\beta -1}\left ( \left \| D^{\alpha/2 }v(s)  \right \|^{2}+\left \| u-\overline{u_{0}} \right \|^{2} \right )ds \right ).
\end{align*}
Finally, we note that from the second equation of \eqref{1.1}
$$\left ( v-\overline{u_{0}} \right )+(-\Delta )^{\frac{\alpha}{2}} \left ( v-\overline{u_{0}} \right )=u-\overline{u_{0}}.$$
\footnote{It mainly depends on the elliptic regularity and Sobolev embeddings mentioned in the literature \cite{jiang2022boundedness} to verify.}Choosing some $p_{0}>n$ in \eqref{4.20}, one may deduce, by elliptic regularity and Sobolev embeddings, that
 $$\left \| v-\overline{u_{0}} \right \|_{W^{1,\infty }(\Omega )}\leq C_{15}\left \| v-\overline{u_{0}} \right \|_{W^{2,p_{0} }(\Omega )}\leq C_{15}^{'}\left \| u-\overline{u_{0}} \right \|_{L^{p_{0}}(\Omega )}\leq C_{15}^{''}e^{(-w)^{1/\beta}t},$$
 with $w=\frac{M}{p_{0}}.$ This completes the proof.
 \end{proof}
\begin{remark}
    With the aid of  fractional differential inequality, it is proved that the exponential stabilization toward the constant steady states by using the Lyapunov functional again. The most important thing is that we use Hardy-Little-wood-Sobolev inequality instead of Poincaré's inequality, and we get the above conclusion.
\end{remark}  
\begin{lemma}\label{89jhgtt}
Assume $\alpha\in(1,2),\beta\in(0,1)$ and based on using fractional Duhamel type integral equation  to equation \eqref{4.17}, we can get 
   \begin{align*}
  \nonumber  w(t)&=E_{\beta }(-t^{\beta }A)w(\tau_{0})\\
&\quad-\int_{\tau_{0}}^{t}(t-s)^{\beta -1}E_{\beta ,\beta }(-(t-s)^{\beta }A)(-\Delta )^{\alpha /2}\left ( (\rho (v(s))-\rho_{0})u(s) \right )ds
\end{align*}
where $A=(-\Delta)^{\alpha/2}$ and $w(t)=u-\overline{u_{0}}$.
 Then there exist constants $C=C(n,\Omega,\rho ,u_{0})>0$ and $\theta\in (0,1)$ such that
    $$\left \| u \right \|_{C^{2+\theta,1+\frac{\theta}{2} }(\bar{\Omega } \times [t,t+1])}\leq C,\qquad \forall t\geq1.$$
\end{lemma}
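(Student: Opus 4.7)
My plan is to derive the integral representation by treating equation \eqref{4.17} as a linear fractional evolution with an exponentially decaying nonlinear forcing, and then use smoothing properties of the Mittag-Leffler operators to bootstrap from the uniform $L^{\infty}$-bound (already available from Lemma \ref{lemma4.3} together with the Alikakos-Moser iteration) up to the desired Hölder regularity. To obtain the representation, I would split $\rho(v)u = \rho_0 u + (\rho(v)-\rho_0)u$ with $\rho_0 = \rho(\overline{u_0})$ and note that, since $\overline{u_0}$ is a constant annihilated by $A=(-\Delta)^{\alpha/2}$, we have $A u = A w$, so \eqref{4.17} rewrites as
\begin{equation*}
\partial_t^\beta w + \rho_0 A w = -A((\rho(v)-\rho_0) u).
\end{equation*}
The standard fractional Duhamel formula---with $E_\beta(-t^\beta A)$ propagating the homogeneous equation and $t^{\beta-1} E_{\beta,\beta}(-t^\beta A)$ acting as the fractional resolvent kernel (cf.\ \cite{kemppainen2017representation})---then gives exactly the stated representation when initialized at $\tau_0 \geq 1$.

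Next, I would insert the known bounds into this representation. The uniform-in-time boundedness from Section \ref{iokjuyhnnj} yields $\|w(\tau_0)\|_{L^\infty}\leq C$, so the homogeneous term $E_\beta(-t^\beta A) w(\tau_0)$ is controlled by the well-known smoothing estimate $\|A^\gamma E_{\beta,\beta}(-\tau^\beta A)f\|_{L^q} \leq C\, \tau^{-\beta\gamma - (n\beta/\alpha)(1/p - 1/q)} \|f\|_{L^p}$, valid for $1 \leq p \leq q \leq \infty$ and $\gamma \geq 0$. For the Duhamel integrand, Lemma \ref{lemma4.5} gives $\|v(s)-\overline{u_0}\|_{W^{1,\infty}} \leq C e^{-\omega^{1/\beta} s}$, and together with the uniform boundedness of $u$ this furnishes $\|(\rho(v(s))-\rho_0) u(s)\|_{L^p} \leq C e^{-\omega^{1/\beta} s}$ for every $p \geq 1$. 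Combined with the smoothing kernel and the Hardy-Littlewood-Sobolev inequality mentioned in the introduction, this provides uniform $L^\infty$-control of $w$ on each interval $[t,t+1]$ with $t \geq 1$.

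To reach the full $C^{2+\theta, 1+\theta/2}$-bound I would bootstrap from $L^\infty$ to Hölder spaces. Spatial Hölder regularity follows from controlling $A^\sigma w$ in $L^p$ with $\sigma > n/(\alpha p)$ via the same Duhamel formula; temporal Hölder regularity comes from differencing the representation at two times $t_1 < t_2$ and using the semigroup identity $E_\beta(-t_2^\beta A) - E_\beta(-t_1^\beta A) = -\int_{t_1}^{t_2} \partial_s E_\beta(-s^\beta A)\, ds$ together with the smoothing bound, which yields a Hölder modulus of order $\beta$. Iterating with progressively better exponents and finally applying interior Schauder estimates for time-fractional parabolic operators closes the argument. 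The main obstacle is the near-critical singularity at $s=t$ in the Duhamel integrand: the kernel $(t-s)^{\beta-1}$ paired with the extra factor $(-\Delta)^{\alpha/2}$ leaves very little slack for smoothing, so each kernel bound $(t-s)^{-\beta\gamma - (n\beta/\alpha)(1/p-1/q)}$ must be calibrated so that the total power of $(t-s)$ stays strictly above $-1$ at every bootstrap step. The assumptions $\alpha \in (1,2)$ and $\beta \in (0,1)$ are precisely what make this calibration possible across finitely many iterations.
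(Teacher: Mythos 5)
Your derivation of the Duhamel representation (splitting $\rho(v)u=\rho_{0}u+(\rho(v)-\rho_{0})u$ and using $Au=Aw$) matches the paper, but your route to the bound $\left\|u\right\|_{C^{2+\theta,1+\theta/2}}\leq C$ has a genuine gap: the bootstrap on the mild formulation cannot close. The Duhamel integrand carries a \emph{full} power of $A=(-\Delta)^{\alpha/2}$ on the forcing, and the smoothing estimate you invoke costs $(t-s)^{-\beta\gamma}$ per $\gamma$ powers of $A$ absorbed by $E_{\beta,\beta}$; since the kernel already contributes $(t-s)^{\beta-1}$, integrability forces $\gamma<1$. Do the accounting: if $(\rho(v(s))-\rho_{0})u(s)$ has $s$ spatial derivatives, then after applying $A$ it has $s-\alpha$, and the Duhamel integral can restore strictly fewer than $\alpha$ of them, so $w$ ends up with \emph{strictly less} regularity than the forcing at every step. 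Since the forcing has at most the regularity of $u$ and $v$ themselves, the scheme is circular and never reaches $C^{2+\theta}$. Your own closing remark about "calibrating the total power of $(t-s)$ to stay above $-1$" identifies the obstruction but does not resolve it --- no calibration with $\gamma\geq 1$ is admissible, and $\gamma<1$ loses derivatives.

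The paper's proof gets the Hölder bound from an entirely different mechanism, namely the quasilinear structure of the $v$-equation. It first uses the second equation and elliptic regularity to put $v$ in $W^{2,p}$ uniformly, hence (by parabolic Sobolev embedding) $u,v\in C^{\theta_{i},\theta_{i}/2}$; it then rewrites the system as
${}_{0}^{C}\mathrm{D}_{t}^{\beta}v+\rho(v)(-\Delta)^{\alpha/2}v=(I+(-\Delta)^{\alpha/2})^{-1}[\rho(v)u]-\rho(v)v$,
where the right-hand side gains $\alpha$ derivatives through $(I+(-\Delta)^{\alpha/2})^{-1}$ and the coefficient $\rho(v)$ is uniformly elliptic with Hölder regularity, so Schauder theory yields $v\in C^{2+\theta_{3},1+\theta_{3}/2}$ and then $u\in C^{2+\theta,1+\theta/2}$. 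Only \emph{after} this is the Duhamel formula used, for the exponential decay of $\left\|w\right\|_{L^{\infty}}$: at that stage $\left\|(-\Delta)^{\alpha/2}\bigl((\rho(v)-\rho_{0})u\bigr)\right\|_{L^{\infty}}$ is already controlled by the Hölder bound, so only the non-smoothing $L^{\infty}\to L^{\infty}$ estimate of $E_{\beta,\beta}$ is needed and the kernel stays at the integrable power $(t-s)^{\beta-1}$. You would need to incorporate the elliptic gain from the $v$-equation (or some equivalent source of genuine smoothing) before your argument can produce the stated regularity.
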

\begin{proof}
 Due  $\left \| u \right \|_{L^{\infty }(\Omega )}$ and $\left \| v \right \|_{W^{1,\infty }(\Omega )}$ are  uniform-in-time bounded, it can be inferred from \eqref{2.1} that
    \begin{align*}
        \underset{t>0}{\text{sup}}\left \| _{0}^{C}\textrm{D}_{t}^{\beta }v \right \|_{L^{\infty }(\Omega )}&\leq \underset{t>0}{\text{sup}}\left ( \left \| (I+(-\Delta )^{\alpha /2})^{-1}\left [ u\rho  (v) \right ] \right \|_{L^{\infty }(\Omega )}+\left \| u\rho  (v) \right \|_{L^{\infty }(\Omega )} \right )\\
&\leq C_{16}\,\underset{t>0}{\text{sup}}\left \| u\rho  (v) \right \|_{L^{\infty }(\Omega )}<\infty
    \end{align*}
and from the second equation of \eqref{1.1}
$$ \underset{t>0}{\text{sup}}\left \| v \right \|_{W^{2,p}(\Omega )}\leq C_{17} \,\underset{t>0}{\text{sup}}\left \| u \right \|_{L^{\infty }(\Omega )}< \infty ,$$
with any $1<p<\infty$ and $C_{16},C_{17}>0$ depending on $n,p$ and $\Omega$. Then, we get $v\in W_{p}^{2,1}(\Omega \times [t,t+1])$ with any $p>\frac{n+2}{2}$ for any $t>0$ and on the same time, by  Sobolev embedding theorem in \cite{ahn2019global}, there exist $\theta_{1}\in (0,2-\frac{n+2}{p}]$ such that
\begin{equation}
    \left \| v \right \|_{C^{\theta_{1},\frac{\theta _{1}}{2} }(\bar{\Omega } \times [t,t+1])}\leq C_{17}\left \| v \right \|_{W_{p}^{2,1}(\Omega \times [t,t+1])}\leq C_{17}^{'},\quad \forall t>0.
\end{equation}
which constant $C_{17},C_{17}^{'}>0$ independent of time. Similarly, referring to the method in [\cite{ahn2019global},Lemma 5.1], we obtain that
$$\left \| u \right \|_{C^{\theta_{2},\frac{\theta _{2}}{2} }(\bar{\Omega } \times [t,t+1])}\leq C_{18},\qquad \forall t\geq1,$$
where time-independent constant $C_{18}>0$.

Next, We combine the first equation and the second equation of \eqref{1.1} to obtain the following key identity: 
$$_{0}^{C}\textrm{D}_{t}^{\beta }v+\rho  (v)(-\Delta )^{\alpha /2}v=\left ( I+(-\Delta )^{\alpha /2} \right )^{-1}[\rho  (v)u]-\rho  (v)v.$$
Since $\rho(v(x,t))$ is uniformly bounded from above and below. \footnote{Referring to uniform elliptic operator $\rho(v(x,t))\Delta$ method in [\cite{jiang2022boundedness},Lemma 4.6], then we obtain $v\rho(v)$ and $\left ( I+(-\Delta )^{\alpha /2} \right )^{-1}[\rho (v)u]$ are now bounded in $C^{\theta_{1},\frac{\theta _{1}}{2} }(\bar{\Omega } \times [t,t+1]).$ in view of our assumption (H0). Further, through Schauder's theory, we get the inequality \eqref{huj897h} and \eqref{ju80okhyt}.}Thus we can further deduce, by a standard version of Schauder's theory for parabolic equations, that with some $\theta_{3}\in(0,1),$
\begin{equation}\label{huj897h}
    \left \| v \right \|_{C^{2+\theta_{3},1+\frac{\theta _{3}}{2} }(\bar{\Omega } \times [t,t+1])}\leq C_{18}^{'},\qquad \forall t\geq1.
\end{equation}
Conversely, we can also finally deduce the following inequalities from the equation of $u$ according to Schauder's theory
\begin{equation}\label{ju80okhyt}
    \left \| u \right \|_{C^{2+\theta,1+\frac{\theta}{2} }(\bar{\Omega } \times [t,t+1])}\leq C_{18}^{''},\qquad \forall t\geq1.
\end{equation}
Through the above preparations and fractional Duhamel type integral equation, we can now  prove the exponential decay of $\left \| u-\overline{u_{0}} \right \|_{L^{\infty}}$. Denotinng $w=u-\overline{u_{0}}$ and $\rho_{0}=\rho(\overline{u_{0}}),$ by the [\cite{li2018cauchy},(2.21)] and let $A=(-\Delta )^{\alpha /2}$, we infer from \eqref{4.17} that, for any $t>\tau_{0}\geq 1$,
\begin{align}
  \nonumber  w(t)&=E_{\beta }(-t^{\beta }A)w(\tau_{0})\\
&\quad-\int_{\tau_{0}}^{t}(t-s)^{\beta -1}E_{\beta ,\beta }(-(t-s)^{\beta }A)(-\Delta )^{\alpha /2}\left ( (\rho  (v(s))-\rho _{0})u(s) \right )ds
\end{align}
As a result, through Lemma \ref{lemma2.4}, we infer that
\begin{align*}
    &\left \|  w(t)\right \|_{L^{\infty }(\Omega )}\leq \left \|  E_{\beta }(-t^{\beta }A)w(\tau_{0})\right \|_{L^{\infty }(\Omega )}\\
&\quad+\int_{\tau_{0}}^{t}(t-s)^{\beta -1}\left \|  E_{\beta ,\beta }(-(t-s)^{\beta }A)(-\Delta )^{\alpha /2}\left ( (\rho  (v(s))-\rho _{0})u(s) \right )\right \|_{L^{\infty }(\Omega )}ds\\
&\leq \left \| w(\tau_{0}) \right \|_{L^{\infty }(\Omega )}+\frac{1}{\Gamma (\beta )}\int_{\tau_{0}}^{t}(t-s)^{\beta -1}\left \| (-\Delta )^{\alpha /2}\left ( (\rho  (v(s))-\rho _{0})u(s) \right ) \right \|_{L^{\infty }(\Omega )}ds.
\end{align*}
 Since $\left \| D^{\alpha }u \right \|_{L^{\infty }(\Omega )}\leq C_{19}$ with some $C_{19}>0$ for all $t\geq 1$. due to Lemma \ref{lemmader} and \eqref{556678}, we obtain that
\begin{align*}
    \left \| D^{\alpha}\left ( (\rho  (v(t))-\rho _{0})u(t) \right ) \right \|_{L^{\infty }(\Omega )}&\leq \left \| {\rho  }'(v(t))u(t)D^{\alpha }v(t) \right \|_{L^{\infty }(\Omega )}\\
&\quad+\left \| (\rho  (v(t))-\rho  (\overline{u_{0}}))D^{\alpha }u(t)) \right \|_{L^{\infty }(\Omega )}\\
&\leq C_{20}\left \| D^{\alpha }v(t) \right \|_{L^{\infty }(\Omega )}+C_{21}\left \|\rho  (v(t))-\rho  (\overline{u_{0}})  \right \|_{L^{\infty }(\Omega )}\\
&\leq C_{21}^{'}\left ( \left \| D^{\alpha }v(t) \right \|_{L^{\infty }(\Omega )}+\left \| v(t)- \overline{u_{0}}\right \|_{L^{\infty }(\Omega )} \right ),
\end{align*}
with all $t\geq 1$. And in order to prove above inequality, we also use the following formula that 
\begin{equation*}
    \left | \rho  (v)-\rho  (\overline{u_{0}}) \right |=\left | (v(t)-\overline{u_{0}}) \right |\int_{0}^{1}{\rho  }'(sv+(1-s)\overline{u_{0}})ds\leq C_{22}\left | v(t)- \overline{u_{0}} \right |,
\end{equation*}
since $sv+(1-s)\overline{u_{0}}$ is uniformly bounded from above and below on $[0,+\infty)\times \bar{\Omega }$ for all $s\in [0,1].$

Finally,  recalling Lemma \ref{lemma4.5} and [\cite{jiang2022boundedness},(4.25)], we may infer that
\begin{align}\label{4.25}
  \nonumber  \left \|  w(t)\right \|_{L^{\infty }(\Omega )}&\leq \left \| w(\tau_{0}) \right \|_{L^{\infty }(\Omega )}+\frac{1}{\Gamma (\beta )}\int_{\tau_{0}}^{t}(t-s)^{\beta -1}e^{(-w)^{1/\beta}s}ds\\
    &\leq C_{23}e^{(-w^{'})^{1/\beta}t}.
\end{align}
with any $w^{'}<w$ and $C_{23}>0$.
\end{proof}

\textbf{Proof of Theorem \ref{theorem4}:}
  Boundedness: Through the Lemma \ref{lemma4.3}, with the proof method in [\cite{ahn2019global},Lemma 4.3]    we can deduce the uniform-in-time boundedness of solutions. Further assuming that $\rho(v)=v^{-k}$ and $n\geq 4$, then referring to \cite{jiang2022boundedness}, we can prove that equation \eqref{1.1} has a unique global bounded classical solution, assuming $k\leq 1$ when $n=4,5$,or $k<\frac{4}{n-2}$ when $n\geq 6$.

Next, in order to obtain inequality \eqref{o90875},  by Lemma \ref{lemma4.5}-\ref{89jhgtt}, we have
    \begin{equation}\label{890oiu}
    \left \| v-\overline{u_{0}} \right \|_{W^{1,\infty }(\Omega )}\leq Ce^{(-w)^{1/\beta}t},\qquad \forall t>0,0<\beta<1.
\end{equation}
with $w=\frac{M}{p_{0}}$. Then from \eqref{4.25}, we get
\begin{equation}\label{jnhfcdse}
    \left \| u(\cdot ,t)-\overline{u_{0}} \right \|_{L^{\infty }(\Omega )}\leq C_{23}e^{(-w^{'})^{1/\beta}t}.
\end{equation}
with any $w^{'}<w$ and $C_{23}>0$. Finally, combined \eqref{890oiu} and \eqref{jnhfcdse}, we conclude that
    $$\left \| u(\cdot ,t)-\overline{u_{0}} \right \|_{L^{\infty }(\Omega )}+\left \| v(\cdot ,t)-\overline{u_{0}} \right \|_{W^{1,\infty }(\Omega )}\leq C_{24}e^{(-w^{'})^{1/\beta}t},\quad \forall t\geq 1,$$
with some $w^{'}>0,C_{24}>0$ depending $u_{0},\gamma,n$ and $\Omega$.
 \begin{remark}
     By using fractional Duhamel type integral equation, we can get the expression of the solution of the equation \eqref{4.17}:
   \begin{align*}
  \nonumber  w(t)&=E_{\beta }(-t^{\beta }A)w(\tau_{0})\\
&\quad-\int_{\tau_{0}}^{t}(t-s)^{\beta -1}E_{\beta ,\beta }(-(t-s)^{\beta }A)(-\Delta )^{\alpha /2}\left ( (\gamma (v(s))-\gamma_{0})u(s) \right )ds
\end{align*}
where $A=(-\Delta)^{\alpha/2}$ and $w(t)=u-\overline{u_{0}}$. Then, recalling Lemma \ref{lemma2.4} and Lemma \ref{lemmader}, we can derive the exponential stabilization of the global solutions by combining Lemma \ref{lemma4.5} and \eqref{4.25}.
 \end{remark}

\begin{appendices}

\section{ Definitions, complements and computations}\label{90oiuynytt}
\subsection{Definition of the fractional derivative }

\begin{definition}\textsuperscript{\cite{Kilbas2006}}
 Assume that $X$ is a Banach space and let $u:\left [ 0,T \right ]\rightarrow X$. The Riemann-Lioville fractional derivative operators of $u$ is defined by
\begin{eqnarray*}
_{0}\textrm{D}_{t}^{\beta }u(t)&=&\frac{1}{\Gamma (1-\beta )}\frac{d}{dt}\int_{0}^{t}(t-s)^{-\beta }u(s)ds,\\
\nonumber_{t}\textrm{D}_{T}^{\beta }u(t)&=&\frac{-1}{\Gamma (1-\beta )}\frac{d}{dt}\int_{t}^{T}(s-t)^{-\beta }u(s)ds,
\end{eqnarray*}
 where $\Gamma (1-\beta )$ is the Gamma function. The above integrals are called the left-sided and the right-sided the Riemann-Lioville  fractional derivatives.
\end{definition}
\begin{definition}\textsuperscript{\cite{li2018some}}
    Let $0<\beta<1$. Consider $u\in L_{loc}^{1}((0,T);\mathbb{R})$ such that $u$ has a right limit $u(0+)$ at $t=0$ in the sense of Definition \ref{definition1}. The $\beta-$th order  Caputo derivative of $u$ is a distribution in $\mathscr{D}'(\mathbb{R})$ with support in $(0,T]$, is defined by
    $$\partial _{c}^{\beta }u:=J_{-\beta }u-u_{0}\text{g}_{1-\beta }=\text{g}_{-\beta }*(\theta (t)(u-u_{0})),$$
    where $J_{\beta}$ denotes the fractional integral operator
    \begin{equation}\label{289jygbd}
        J_{\beta}u(t)=\frac{1}{\Gamma (\beta )}\int_{0}^{t}(t-s)^{\beta -1}u(s)ds.
    \end{equation}
    Similarly, the $\beta-$th order right Caputo derivative of $u$ is a distribution in $\mathscr{D}'(\mathbb{R})$ with support in $(-\infty,T]$, given by
    $$\tilde{\partial}_{T}^{\beta}u:=\tilde{\text{g}}_{-\beta}*(\theta(T-t)(u(t)-u(T-))). $$
\end{definition}

\begin{definition}\textsuperscript{\cite{li2018some}}\label{definition1}
    Let $B$ be a Banach space. For a function $u\in L_{loc}^{1}((0,T);B)$, if there exists $u_{0}\in B$ such that
    $$\lim_{t\rightarrow 0^{+}}\frac{1}{t}\int_{0}^{t}\left \| u(s)-u_{0} \right \|_{B}ds=0.$$
    We call $u_{0}$ the right limit of $u$ at $t=0$, denoted by $u(0+)=u_{0}.$
    Similarly, we define $u(T-)$ to be the constant $u_{T}\in B$ such that
    $$\lim_{t\rightarrow T^{-}}\frac{1}{T-t}\int_{t}^{T}\left \| u(s)-u_{T} \right \|_{B}ds=0.$$
\end{definition}

\begin{remark}

  As in \cite{li2018some}, we use the following distributions $\left \{ \text{g}_{\beta} \right \}$ as the convolution kernels for $\beta>-1:$
  \begin{equation*}
  \text{g}_{\beta}(t):= \begin{cases}
      \frac{\theta (t)}{\rho  (\beta )}t^{\beta -1},&\beta >0\\
\delta (t),&\beta =0\\
\frac{1}{\rho  (1+\beta )}D(\theta (t)t^{\beta }),&\beta \in(-1,0),
      \end{cases}
  \end{equation*}
Here $\theta (t)$ is the standard Heaviside step function and $D$ represents the distributional. $\text{g}_{\beta}$ can also be defined for $\beta\leq -1$(see \cite{li2018some}) so that these distributions form a convolution group $\Phi =\left \{ \text{g}_{\beta}:\beta \in\mathbb{R} \right \}$ and consequently we have
$$\text{g}_{\beta_{1}}*\text{g}_{\beta_{2}}=\text{g}_{\beta_{1}+\beta_{2}},\quad \forall \beta_{1},\beta_{2}\in \mathbb{R}.$$
Correspondingly, the time-reflected group:
$$\tilde{\Phi }:=\left \{ \tilde{\text{g}}_{\alpha}:\tilde{\text{g}}_{\alpha}(t)=\text{g}_{\alpha }(-t),\alpha \in\mathbb{R} \right \}.$$
Clearly, supp $\tilde{\text{g}}\subset (-\infty ,0]$ and for $\rho \in (0,1)$, the following equality is true
$$\tilde{\text{g}}_{-\rho  }(t)=-\frac{1}{\Gamma (1-\rho  )}D(\theta (-t)(-t)^{-\rho  })=-D\tilde{\text{g}}_{1-\rho  }(t),$$
where $D$ represents the distributional derivative on $t$.
\end{remark}

\subsection{Definition  of classical solution }\label{90oiuy}
\begin{definition}\textsuperscript{\cite{kemppainen2017representation}}\label{hhugtt}
  (Classical Solution)  Let $0<\beta\leq 1$ and $0<\alpha \leq 2$. Suppose $u_{0}\in C(\mathbb{R}^{n})$, Then  a function $u\in C\left ( (0,T ) \times\mathbb{R}^{n}\right )$ is a classical solution of the Cauchy problem
    \begin{equation}\label{uhffdrth}
        \begin{cases}
             \partial _{t}^{\beta }u+(-\Delta )^{\frac{\alpha}{2}}(\rho  (v)u)=0,& \text{in}\, \mathbb{R}^{n}\times (0,\infty)\\
             u(x,0)=u_{0}(x),&\text{in}\,\mathbb{R}^{n}
        \end{cases}
    \end{equation}
 \textbf{(i)}  $\mathcal{F}^{-1}(\left | \xi  \right |^{\alpha}\hat{\rho (v)u}(\xi ))(x)$ defines a continuous function of $x$ for each $t>0$,\\
 \textbf{(ii)} for every $x\in \mathbb{R}^{n}$, the fractional integral $J_{1-\beta}u$, as defined in \eqref{289jygbd}, is  continuously differentiable with respect to $t>0$,\\
\textbf{(iii)} the function $u(x,t)$ satisfies the integro-partial equation of \eqref{uhffdrth} for every $(x,t)\in \mathbb{R}^{n}\times (0,\infty)$ and the initial condition of \eqref{uhffdrth} for every $x\in \mathbb{R}^{n}$.
\end{definition}
\begin{theorem}\textsuperscript{\cite{jiang2022boundedness}}
    Let $\Omega$ be a smooth bounded domain of $\mathbb{R}^{n}$. Suppose that $\rho  (\cdot )$ satisfies (H0) and $u_{0}$ satisfies \eqref{1.5},$u_{0}\in C(\Omega),u_{0}\geq 0$.  Then there exists $T_{max}\in (0,\infty ]$ suth that problem \eqref{1.1} possesses a unique non-negative classical solution
    $(u,v)\in \left ( C\left ( \Omega\times (0,T_{max} ) \right )\cap C^{2,1}(\Omega \times (0,T_{max} )) \right )^{2}$ in the form of Definition \ref{hhugtt}.
Moreover,If $T_{max}<\infty$, then
    $$\lim_{t\rightarrow T_{max}}\text{sup}\,\left \| u(\cdot ,t) \right \|_{L^{\infty }(\Omega )}=\infty.$$
\end{theorem}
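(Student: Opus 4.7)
The plan is to prove local existence by a Banach fixed point argument on the fractional Duhamel (mild) formulation of \eqref{1.1}, then bootstrap the mild solution to a classical one in the sense of Definition \ref{hhugtt}, and finally obtain the blow-up alternative by a standard continuation argument.

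First, I would reformulate the problem. Set $A := (-\Delta)^{\alpha/2}$ subject to Neumann boundary conditions on $\Omega$; the second equation of \eqref{1.1} yields the algebraic relation $v = (I+A)^{-1}[u]$, where $(I+A)^{-1}$ is a bounded linear map $L^{\infty}(\Omega)\to W^{1,\infty}(\Omega)$ by elliptic regularity. For the first equation, applying the fractional Duhamel principle already used in Section \ref{90oiuytgv} (see also \cite{li2018cauchy,kemppainen2017representation}) and splitting $\rho(v)u = \rho(u_0(x_0))u + (\rho(v)-\rho(u_0(x_0)))u$ for a convenient base value, we may write
\[
u(t) = E_{\beta}(-t^{\beta}A)\,u_{0} - \int_{0}^{t}(t-s)^{\beta-1}E_{\beta,\beta}(-(t-s)^{\beta}A)\,A\bigl[(\rho(v(s))-\rho_{0})u(s)\bigr]\,ds.
\]
Define the solution map $\Phi$ on $X_{T}:=C([0,T];C(\bar{\Omega}))$ by the right-hand side above together with the coupling $v=(I+A)^{-1}[u]$.

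Second, I would establish the contraction. By Lemma \ref{lemma2.4}, $E_{\beta}(-t^{\beta}A)$ is contractive on $L^{\infty}(\Omega)$, while $E_{\beta,\beta}(-(t-s)^{\beta}A)\,A^{1-\sigma}$ satisfies a smoothing estimate of order $\sigma\in(0,1)$. The key observation is that $\rho(v)u$ gains regularity through the elliptic solution operator $v=(I+A)^{-1}[u]$, so that $\|(\rho(v_{1})-\rho(v_{2}))u\|$ can be controlled by $\|u_{1}-u_{2}\|_{X_{T}}$ with a Lipschitz constant coming from (H0). Combining these ingredients yields
\[
\|\Phi(u_{1})-\Phi(u_{2})\|_{X_{T}}\le C(R)\,T^{\gamma}\,\|u_{1}-u_{2}\|_{X_{T}}
\]
for $\|u_{i}\|_{X_{T}}\le R$ and some $\gamma>0$ obtained by balancing the singular kernel $(t-s)^{\beta-1}$ against the smoothing of $E_{\beta,\beta}(-(t-s)^{\beta}A)\,A$. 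Choosing $T=T(R,\|u_{0}\|_{\infty})$ small enough makes $\Phi$ a contraction on a closed ball of $X_{T}$, producing a unique mild solution. Nonnegativity of $u$ on this interval follows from a weak maximum principle for Caputo-fractional parabolic equations: testing the equation with $u_{-}$ and invoking Lemma \ref{lemma1} together with the positivity of $\rho(v)$ forces $u_{-}\equiv0$.

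Third, I would upgrade the mild solution to a classical one. From $u\in L^{\infty}_{t}L^{\infty}_{x}$ and the elliptic equation, $v\in L^{\infty}_{t}W^{2,p}_{x}$ for any $p<\infty$, hence $v\in C^{\theta,\theta/2}$ by Sobolev embedding. Feeding this back into $\partial_{t}^{\beta}u+A(\rho(v)u)=0$ and applying Schauder-type theory for time-fractional parabolic equations (as in \cite{kemppainen2017representation}) yields $u\in C^{2+\theta,1+\theta/2}$, after which the elliptic equation gives $v\in C^{2+\theta,1+\theta/2}$, verifying items (i)--(iii) of Definition \ref{hhugtt}. For the blow-up alternative, define $T_{\max}$ as the supremum of existence times of classical solutions. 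If $\limsup_{t\nearrow T_{\max}}\|u(\cdot,t)\|_{L^{\infty}(\Omega)}=:M<\infty$ and $T_{\max}<\infty$, then for any $t_{0}$ close to $T_{\max}$ the initial data $u(\cdot,t_{0})$ lies in a ball depending only on $M$, and the fixed-point construction yields a solution on $[t_{0},t_{0}+\tau]$ with $\tau=\tau(M)>0$ independent of $t_{0}$; choosing $t_{0}>T_{\max}-\tau$ extends the solution beyond $T_{\max}$, contradicting maximality.

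The main obstacle will be the contraction estimate: one must quantitatively match the singularity $(t-s)^{\beta-1}$ with the $L^{\infty}\!\to\!L^{\infty}$ operator bound on $E_{\beta,\beta}(-(t-s)^{\beta}A)\,A$, and simultaneously exploit the $\alpha$-gain from $(I+A)^{-1}$ in the elliptic coupling to avoid a loss of derivatives when estimating $\rho(v)-\rho_{0}$. The Neumann boundary condition for the fractional Laplacian further forces one to realize $A$ via its spectral calculus on the orthonormal basis of Neumann eigenfunctions, so that the Mittag-Leffler operators are well defined and positivity-preserving; and the bootstrap to $C^{2+\theta,1+\theta/2}$ requires a Schauder theory for Caputo equations with Hölder-continuous variable coefficient $\rho(v(x,t))$, whose current literature (e.g. \cite{kemppainen2017representation}) will need to be invoked with some care.
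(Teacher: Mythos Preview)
The paper does not actually prove this theorem: it is stated with a citation superscript to \cite{jiang2022boundedness}, and the only accompanying text is a Remark saying the result is ``obtained by referring to'' \cite{kemppainen2017representation} and \cite{jiang2022boundedness}. Your proposal therefore goes well beyond what the paper supplies, and a comparison of approaches is moot.

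That said, your plan contains a genuine gap in the blow-up alternative. For the Caputo derivative $\partial_t^\beta$ with $\beta\in(0,1)$, the equation at time $t$ depends on the entire history on $[0,t]$ through the kernel $(t-s)^{-\beta}$; there is no flow property, and one cannot simply ``restart'' the problem at $t_0$ with initial datum $u(\cdot,t_0)$ and obtain on $[t_0,t_0+\tau]$ a solution of the \emph{same} equation that matches the original one. The continuation argument you invoke in the last paragraph is valid for $\beta=1$ but fails verbatim here. A correct argument must either show that the fixed-point lifespan depends only on $\sup_{[0,T_{\max})}\|u\|_{L^\infty}$ together with uniform control of the full memory integral, or re-run the contraction on all of $[0,T_{\max}+\tau]$ in a weighted-in-time norm, treating the already-constructed solution as part of the data.

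A secondary concern is the contraction step itself. In $X_T=C([0,T];C(\bar\Omega))$, combining the kernel $(t-s)^{\beta-1}$ with the bound $\|E_{\beta,\beta}(-(t-s)^\beta A)\,A\|_{L^\infty\to L^\infty}\lesssim (t-s)^{-\beta}$ coming from Lemma~\ref{lemma2.4}(ii) with $\sigma=1$ produces the non-integrable factor $(t-s)^{-1}$. The elliptic gain $v=(I+A)^{-1}u$ improves the regularity of $\rho(v)$ but not of the product $\rho(v)u$, so it does not by itself cure this loss. You will need either to run the fixed point in a scale carrying some spatial regularity (so that $A$ costs strictly less than a full factor of $(t-s)^{-\beta}$), or to rewrite $A[\rho(v)u]$ in divergence form and pair the propagator with only a fractional power of $A$.
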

\begin{remark}
    The global existence and uniqueness of classical solutions to equation \eqref{1.1} is obtained by referring to [\cite{kemppainen2017representation},Definition 2.4] and [\cite{jiang2022boundedness},Theorem 2.1]. But the difference is that because of the existence of time-space fractional derivatives, the conditions are more than [\cite{jiang2022boundedness},Theorem 2.1].
\end{remark}
\subsection{ Definition  of fractional Laplacian }\label{8i90oyt}
In this section, we know some properties about fractional Laplacian.  By \cite{de2012general}, the nonlocal operator $(-\Delta )^{\frac{\alpha }{2}}$, known as the Laplacian of order $\alpha$, is defined for any function $g$ in the Schwartz class through the Fourier transform: if $(-\Delta )^{\frac{\alpha }{2}}g=h$, then
\begin{equation}\label{1.9087}
    \hat{h}(\xi )=\left | \xi  \right |^{\alpha }\hat{\text{g}}(\xi ).
\end{equation}
In addition, it also needs the following property. Thought \cite{de2012general}, if $\psi $ and $\varphi $ belong to the Schwartz class, definition \eqref{1.9087} of the fractional Laplacian together with Plancherel's theorem yields
\begin{align}\label{gyuij}
 \nonumber   \int _{\mathbb{R}^{n}}(-\Delta )^{\alpha /2}\psi \varphi dx&=\int _{\mathbb{R}^{n}}\left | \xi  \right |^{\alpha }\hat{\psi }\hat{\varphi }dx\\
\nonumber&=\int _{\mathbb{R}^{n}}\left | \xi  \right |^{\alpha/2 }\hat{\psi }\left | \xi  \right |^{\alpha/2 }\hat{\varphi }dx\\
&=\int _{\mathbb{R}^{n}}(-\Delta )^{\alpha /4}\psi (-\Delta )^{\alpha /4}\varphi dx
\end{align}
According to Chapter V in \cite{stein1970singular}, the nonlocal operator $(-\Delta )^{\frac{\alpha}{2}}$, known as the Laplacian of order $\frac{\alpha}{2}$, is given by the Fourier multiplier
$$D^{\alpha}u(x):=(-\Delta )^{\alpha/2}u(x):=\mathcal{F}^{-1}(\left | \xi  \right |^{\alpha}\hat{u}(\xi ))(x),$$
where $\hat{u}(\xi)=\mathcal{F}(u(x))$ is the Fourier transformation of function $u(x)$. Also, we will use the following formula as the one give in Caffarelli and Silvestre \cite{caffarelli2007extension}:
\begin{align}\label{4}
    (-\Delta)^{\alpha/2} u=C_{n,\alpha}P.V.\int _{\mathbb{R}^{n}}\frac{u(x)-u(y)}{\left | x-y \right |^{n+\alpha}}dy,
\end{align}	
where $C_{n,\alpha}=\frac{2^{\alpha-1}\alpha \Gamma ((n+\alpha)/2)}{\Gamma (1-\alpha/2 )\pi^{\alpha/n}}$ is normalization constant and $P.V.$ denotes the Cauchy principal value.
\subsection{Some useful Lemma  about  fractional derivative }\label{inequaliyy}

\begin{lemma}\textsuperscript{\cite{alikhanov2010priori}}\label{lemma23}
Let $0<\beta<1$ and $v\in C([0,T],\mathbb{R}^{N}),{v}'\in L^{1}(0,T;\mathbb{R}^{N})$ and $v$ be monotone. Then
  \begin{equation}\label{888}
    v(t)\partial _{t}^{\beta }v(t)\geq \frac{1}{2}\partial _{t}^{\beta }v^{2}(t),\quad t\in(0,T].
  \end{equation}
\end{lemma}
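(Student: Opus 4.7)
The plan is to compute $v(t)\,\partial_t^\beta v(t)-\tfrac{1}{2}\partial_t^\beta v^2(t)$ directly from the Caputo integral representation~\eqref{32} and then read off non-negativity from monotonicity of $v$. Here I read the product $v\,\partial_t^\beta v$ and the square $v^2$ as the Euclidean scalar product and squared norm when $N>1$, and the word ``monotone'' as componentwise monotone on $[0,T]$.

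First, since $v\in C([0,T];\mathbb{R}^N)$ is bounded on $[0,T]$ and $v'\in L^1(0,T;\mathbb{R}^N)$, the scalar map $t\mapsto |v(t)|^2$ is absolutely continuous with $(|v|^2)'(s)=2\,v(s)\cdot v'(s)$ a.e., and this derivative lies in $L^1(0,T)$ because the product of a bounded continuous function and an $L^1$ function is integrable. Applying~\eqref{32} to $|v|^2$, and pulling the constant $v(t)$ inside the integral that defines $\partial_t^\beta v(t)$, subtraction yields the one-line identity
\begin{equation*}
v(t)\cdot\partial_t^\beta v(t)-\frac{1}{2}\partial_t^\beta |v|^2(t)=\frac{1}{\Gamma(1-\beta)}\int_0^t (t-s)^{-\beta}\bigl(v(t)-v(s)\bigr)\cdot v'(s)\,ds.
\end{equation*}
The whole lemma now reduces to showing that the integrand on the right is a.e.\ non-negative in $s\in(0,t)$.

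Second, this is exactly where the monotonicity hypothesis is used. If $v$ is componentwise non-decreasing, then in each coordinate $v_i'(s)\ge 0$ a.e.\ and $v_i(t)-v_i(s)\ge 0$ for all $s\le t$, so each product $(v_i(t)-v_i(s))\,v_i'(s)$ is non-negative and their sum, which is $(v(t)-v(s))\cdot v'(s)$, is non-negative. If $v$ is componentwise non-increasing the two factors flip sign simultaneously in every coordinate, so the product is again non-negative. Combined with the strictly positive kernel $(t-s)^{-\beta}$ and the positive prefactor $1/\Gamma(1-\beta)$, the integral on the right is $\geq 0$, which is exactly~\eqref{888}.

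There is essentially no serious obstacle: the argument is elementary once the key identity above is in place. The only technical points to spell out carefully are that $v'\in L^1$ together with continuity of $v$ lets us apply the chain rule to $|v|^2$ pointwise almost everywhere, and that the Caputo integral for $|v|^2$ is well defined under these regularity hypotheses, so the interchange of $v(t)$ with the $s$-integral is legitimate. No density approximation, Fourier-side computation, or appeal to convexity is required, and the proof adapts verbatim to the scalar case $N=1$ that is actually used later via~\eqref{888} in the proof of Lemma~\ref{lemma3.1}.
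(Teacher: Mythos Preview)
Your argument is correct. The identity you derive by subtracting the two Caputo integrals is exactly the right starting point, and under the componentwise monotonicity hypothesis each term $(v_i(t)-v_i(s))\,v_i'(s)$ is non-negative, so the conclusion follows. One small remark: your case split (``all components non-decreasing'' vs.\ ``all components non-increasing'') omits the mixed case where different coordinates have different monotonicity directions, but the same reasoning applies coordinate by coordinate, so this is cosmetic.

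For comparison: the paper does not give its own proof of this lemma but simply cites Alikhanov. The argument in that reference is different from yours and in fact does \emph{not} use monotonicity at all. Starting from the same identity you wrote, one observes that $(v(t)-v(s))\cdot v'(s)=-\tfrac{1}{2}\frac{d}{ds}|v(t)-v(s)|^{2}$ and integrates by parts in $s$; the boundary contribution at $s=0$ is $\tfrac{1}{2}t^{-\beta}|v(t)-v(0)|^{2}\ge 0$, the boundary term at $s=t$ vanishes, and the remaining bulk integral is $\tfrac{\beta}{2}\int_{0}^{t}(t-s)^{-\beta-1}|v(t)-v(s)|^{2}\,ds\ge 0$. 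This yields the inequality for every absolutely continuous $v$, so the monotonicity hypothesis in the paper's statement is actually superfluous. Your route is shorter and avoids the integration by parts (and the attendant check that the boundary term at $s=t$ vanishes), at the cost of relying on an assumption that the classical proof does not need.
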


\begin{lemma}\textsuperscript{\cite{Kilbas2006}}\label{lemma200}
    Assume $0<\beta<2$, for any $\gamma \in \mathbb{R}$, there is a constant $\mu$ such that $\frac{\pi \beta }{2}<\mu <min\left \{ \pi ,\pi \beta\right \}$, then there is a constant $c=c(\beta ,\gamma ,\mu )>0$ , such that$$\left | E_{\beta ,\gamma }(z)\right |\leq \frac{c}{1+\left | z \right |}, \qquad \mu \leq \left | arg(z) \right |\leq \pi.$$
\end{lemma}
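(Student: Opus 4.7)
The plan is to decompose the theorem into three self-contained stages and assemble them at the end: first the uniform-in-time boundedness of $(u,v)$, then the exponential stabilization of $v-\overline{u_{0}}$ in $W^{1,\infty}$, and finally the exponential stabilization of $u-\overline{u_{0}}$ in $L^{\infty}$ via the fractional Duhamel representation. The payoff of this arrangement is that the $L^{\infty}$-decay of $u$ is the most delicate estimate and requires both of the earlier stages as inputs.

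For the boundedness stage, I would start from the time-independent upper bound of $v$ established in Lemma \ref{lemma3.1} and Lemma \ref{lemma3.3}, which is available under hypotheses (H0), (H1). Using the weighted-energy identity of Lemma \ref{897hhg} together with the estimate \eqref{4.11}, I obtain, for some $p>n/2-1$ and $q=pl_{0}/2$, a fractional differential inequality of the form $_{0}^{C}\mathrm{D}_{t}^{\beta}\!\int_{\Omega}u^{p+1}\rho^{q}(v)\,dx+C_{6}\!\int_{\Omega}u^{p+1}\rho^{q}(v)\,dx\le C_{7}\bigl(\int_{\Omega}u^{(1+p)/2}\,dx\bigr)^{2}$; solving this via Lemma \ref{Gao}/\ref{lemma390} and combining with mass conservation yields $\sup_{t\ge 0}\|u\|_{L^{1+p}(\Omega)}<\infty$, which can then be boosted to $L^{\infty}$ through a standard Alikakos--Moser iteration in the fractional setting. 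The uniqueness and global existence then follow directly from the local theory recalled in Appendix \ref{90oiuy}.

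For the exponential decay of $v-\overline{u_{0}}$, I would invoke Lemma \ref{lemma4.5}: the Lyapunov functional \eqref{3.1} combined with the mass-preservation identity $_{0}^{C}\mathrm{D}_{t}^{\beta}\!\int_{\Omega}v\,dx=0$ converts the dissipation inequality into $\tfrac{1}{2}\,_{0}^{C}\mathrm{D}_{T}^{\beta}\bigl(\|D^{\alpha/2}v\|^{2}+\|v-\overline{u_{0}}\|^{2}\bigr)+\tfrac{\mu_{1}\rho(v^{*})}{1+\mu_{1}}\bigl(\|D^{\alpha/2}v\|^{2}+\|v-\overline{u_{0}}\|^{2}\bigr)\le 0$, after applying the Hardy--Littlewood--Sobolev inequality to handle $\int_{\Omega}\rho(v)|D^{\alpha/2}v|^{2}$. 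Integrating this fractional differential inequality gives an $L^{2}$-decay $\|v-\overline{u_{0}}\|_{L^{2}}\lesssim e^{(-w)^{1/\beta}t}$, which is then upgraded to $W^{1,\infty}$ through the elliptic identity $(v-\overline{u_{0}})+(-\Delta)^{\alpha/2}(v-\overline{u_{0}})=u-\overline{u_{0}}$ together with uniform $W^{2,p_{0}}$ elliptic regularity and Sobolev embedding for $p_{0}>n$.

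The hardest step is the $L^{\infty}$-decay of $w(t)=u-\overline{u_{0}}$ through the Duhamel representation stated in the theorem. Here I would first use the $C^{2+\theta,1+\theta/2}$ parabolic regularity of Lemma \ref{89jhgtt} (obtained by treating the second equation as a uniformly elliptic resolvent problem and then applying Schauder theory) to guarantee that $(-\Delta)^{\alpha/2}((\rho(v)-\rho_{0})u)$ is a well-defined bounded function. Then, using the pointwise Mittag-Leffler bound of Lemma \ref{lemma200}/\ref{lemma2.4} on the operators $E_{\beta}(-t^{\beta}A)$ and $E_{\beta,\beta}(-t^{\beta}A)$, I would bound
\begin{equation*}
\|w(t)\|_{L^{\infty}}\le \|w(\tau_{0})\|_{L^{\infty}}+\frac{1}{\Gamma(\beta)}\int_{\tau_{0}}^{t}(t-s)^{\beta-1}\bigl\|D^{\alpha}\bigl((\rho(v(s))-\rho_{0})u(s)\bigr)\bigr\|_{L^{\infty}}ds.
\end{equation*}
The integrand is controlled by a constant multiple of $\|D^{\alpha}v\|_{L^{\infty}}+\|v-\overline{u_{0}}\|_{L^{\infty}}$ using the mean-value estimate $|\rho(v)-\rho(\overline{u_{0}})|\le C|v-\overline{u_{0}}|$ (valid since $\rho'$ is bounded on the compact range of $v$), and by the previous stage this is majorised by $e^{(-w)^{1/\beta}s}$. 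The main obstacle is reconciling the algebraic kernel $(t-s)^{\beta-1}$ with the exponential source term so that the convolution integral still inherits an exponential bound: this is precisely where one loses a small amount in the rate and must pass to any $w'<w$, producing the final bound $\|w(t)\|_{L^{\infty}}\le C_{23}e^{(-w')^{1/\beta}t}$. Summing the two decay estimates yields \eqref{o90875} with $\theta=w'$.
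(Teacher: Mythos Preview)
Your proposal does not address the stated lemma at all. The statement you were asked to prove is Lemma~\ref{lemma200}, namely the asymptotic bound
\[
|E_{\beta,\gamma}(z)|\le \frac{c}{1+|z|},\qquad \mu\le |\arg z|\le \pi,
\]
for the two-parameter Mittag--Leffler function. What you have written is instead an outline of the proof of Theorem~\ref{theorem4} (boundedness and exponential stabilization of the Keller--Segel solution), invoking Lemmas~\ref{lemma3.1}, \ref{lemma3.3}, \ref{897hhg}, \ref{lemma4.5}, \ref{89jhgtt} and the Duhamel representation. None of this bears on the Mittag--Leffler estimate; indeed, in your outline Lemma~\ref{lemma200} appears only as a tool to be \emph{applied}, not as the object to be \emph{proved}.

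In the paper itself, Lemma~\ref{lemma200} carries no proof: it is simply quoted from \cite{Kilbas2006} as a known special-function result. If an argument were required, it would be purely complex-analytic (an integral representation of $E_{\beta,\gamma}$ followed by a contour deformation and asymptotic expansion in the sector $\mu\le|\arg z|\le\pi$) and entirely disconnected from the PDE machinery you describe. You should either identify the correct target statement and redo the proposal, or, if Theorem~\ref{theorem4} was the intended target, say so explicitly.
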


 \begin{lemma}\textsuperscript{\cite{2001Fractional}}\label{Gao}
  Let us consider the  fractional differential equation
  \begin{align}\label{jiojio}
  \begin{cases}
     _{0}^{C}\textrm{D}_{t}^{\beta }u(t)=wu(t),\,0<\beta<1,w>0,\\
u(0)=u_{0}.
  \end{cases}
 \end{align}
Then, the solution of \eqref{jiojio} can be obtained by applying the Laplace transform technique which implies:
\begin{equation}\label{9i9i}
    u(t)=u_{0}E_{\beta}(wt^{\beta}),\quad t>0.
\end{equation}
\end{lemma}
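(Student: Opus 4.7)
The plan is to apply the Laplace transform method directly to the linear Caputo fractional ODE. First I would denote $U(s)=\mathcal{L}[u(t)](s)$ and invoke the classical formula for the Laplace transform of the Caputo derivative of order $\beta\in(0,1)$, namely
\begin{equation*}
\mathcal{L}\bigl[{}_{0}^{C}\textrm{D}_{t}^{\beta}u(t)\bigr](s) = s^{\beta}U(s) - s^{\beta-1}u(0),
\end{equation*}
which follows from the definition \eqref{32} by writing the Caputo integral as a convolution of $u'$ with the kernel $t^{-\beta}/\Gamma(1-\beta)$, whose Laplace transform is $s^{\beta-1}$, and then using $\mathcal{L}[u'](s)=sU(s)-u(0)$.

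Applying this to both sides of \eqref{jiojio} and using the initial condition $u(0)=u_{0}$ produces the algebraic equation
\begin{equation*}
s^{\beta}U(s) - s^{\beta-1}u_{0} = w\,U(s),
\end{equation*}
which immediately yields
\begin{equation*}
U(s) = \frac{s^{\beta-1}}{s^{\beta}-w}\,u_{0}.
\end{equation*}
The task then reduces to inverting this rational-in-$s^{\beta}$ expression. The relevant identity is
\begin{equation*}
\mathcal{L}^{-1}\!\left[\frac{s^{\beta-\gamma}}{s^{\beta}-w}\right]\!(t) = t^{\gamma-1}E_{\beta,\gamma}(wt^{\beta}),
\end{equation*}
valid for $\mathrm{Re}(s)>w^{1/\beta}$. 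Specialising to $\gamma=1$ and recalling that $E_{\beta,1}=E_{\beta}$ from Definition \ref{proposition 2.3} delivers the claimed formula $u(t)=u_{0}E_{\beta}(wt^{\beta})$.

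The main step requiring care is justifying the inverse Laplace transform above. I would do this by expanding
\begin{equation*}
\frac{s^{\beta-1}}{s^{\beta}-w} = \sum_{k=0}^{\infty}\frac{w^{k}}{s^{\beta k+1}}
\end{equation*}
as a geometric series in the region $|s|^{\beta}>w$, inverting term by term via $\mathcal{L}^{-1}[s^{-\beta k-1}](t)=t^{\beta k}/\Gamma(\beta k+1)$, and summing to recover
\begin{equation*}
\sum_{k=0}^{\infty}\frac{(wt^{\beta})^{k}}{\Gamma(\beta k+1)} = E_{\beta}(wt^{\beta}),
\end{equation*}
exactly in the form of Definition \ref{proposition 2.3}. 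Termwise inversion is legitimate because the resulting series converges uniformly on compact subsets of $t>0$, thanks to the entireness of $E_{\beta}$. To close the argument I would verify by direct substitution that $u_{0}E_{\beta}(wt^{\beta})$ indeed solves \eqref{jiojio}: differentiating the series termwise, applying ${}_{0}^{C}\textrm{D}_{t}^{\beta}$ to $t^{\beta k}$ gives $\Gamma(\beta k+1)t^{\beta(k-1)}/\Gamma(\beta(k-1)+1)$, and re-indexing the sum reproduces $wE_{\beta}(wt^{\beta})$; uniqueness then follows from linearity and a standard Gronwall-type estimate for Caputo equations, so that $u_{0}E_{\beta}(wt^{\beta})$ is the unique solution.
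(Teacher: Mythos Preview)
Your proposal is correct and follows precisely the Laplace transform route that the lemma statement itself advertises; the paper does not supply its own proof but merely cites this as a standard result, so there is nothing further to compare.
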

\begin{lemma}\textsuperscript{\cite{2001Fractional}}\label{lemma390}
   If $0<\beta<1,t>0,w>0$, for Mittag-Leffler function $E_{\beta,1 }(wt^{\beta })$ , then there is a constant $C$ such that
   \begin{equation}\label{frfr}
     E_{\beta}(wt^{\beta})=E_{\beta,1 }(wt^{\beta })\leq Ce^{w^{\frac{1}{\beta }} t}.
   \end{equation}
\end{lemma}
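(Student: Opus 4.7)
The plan is to reduce the estimate to a standard one-parameter bound on the Mittag-Leffler function along the positive real axis and then substitute $z = wt^{\beta}$. Specifically, it suffices to prove that there exists a constant $C > 0$, depending only on $\beta$, such that
\begin{equation*}
E_{\beta}(z) \leq C\, e^{z^{1/\beta}} \qquad \text{for all } z \geq 0.
\end{equation*}
Once this is in hand, setting $z = wt^{\beta}$ yields $E_{\beta}(wt^{\beta}) \leq C e^{(wt^{\beta})^{1/\beta}} = C e^{w^{1/\beta} t}$, which is exactly \eqref{frfr}.

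To prove the one-parameter bound, I would split the argument into a compact regime and an asymptotic regime. On a compact interval $[0,R]$, the defining power series $E_{\beta}(z) = \sum_{k=0}^{\infty} z^k / \Gamma(\beta k+1)$ converges locally uniformly (since $\Gamma(\beta k+1)$ grows faster than any geometric rate in $k$), so $E_{\beta}$ is continuous and bounded above by some $M = M(\beta, R)$. Because $e^{z^{1/\beta}} \geq 1$ for $z \geq 0$, this immediately gives $E_{\beta}(z) \leq M \leq M e^{z^{1/\beta}}$ on $[0,R]$.

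The heart of the matter is the asymptotic regime $z \to +\infty$, where I would invoke the classical expansion from Podlubny's monograph: for $0 < \beta < 1$ and real positive $z$,
\begin{equation*}
E_{\beta}(z) = \frac{1}{\beta} \exp\!\left(z^{1/\beta}\right) - \sum_{k=1}^{N} \frac{z^{-k}}{\Gamma(1 - \beta k)} + O\!\left(z^{-N-1}\right), \qquad z \to +\infty.
\end{equation*}
This expansion is obtained from the Hankel-type contour representation of $E_{\beta}$ by deforming the contour around the branch cut on the negative real axis and collecting the residue at the saddle $\xi = z^{1/\beta}$; since $\arg z = 0 < \beta\pi/2$, the exponential leading term is present. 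Taking $N = 1$, one extracts a constant $C_1 > 0$ and a threshold $R > 0$ such that $E_{\beta}(z) \leq C_1 e^{z^{1/\beta}}$ whenever $z \geq R$. Combining the two regimes with $C := \max\{M, C_1\}$ gives the uniform bound on $[0,\infty)$.

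The only nontrivial ingredient is the asymptotic expansion itself, which is standard but requires careful contour deformation; since the result is explicitly referenced in the monograph cited as \cite{2001Fractional}, my plan is to quote the expansion as a known fact and then perform the straightforward gluing of the compact-set continuity bound and the asymptotic leading term. A minor point to watch is that the constant $C$ blows up as $\beta \downarrow 0$ (reflecting the $1/\beta$ factor in the leading term), but this is consistent with the statement, where $C$ is allowed to depend on $\beta$.
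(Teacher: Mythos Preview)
Your proposal is correct. Note that the paper does not supply its own proof of this lemma: it is stated with a superscript citation to \cite{2001Fractional} (Podlubny's monograph) and used as a black box. Your argument---reducing to the one-variable bound $E_{\beta}(z)\leq C e^{z^{1/\beta}}$ for $z\geq 0$, handling a compact interval by continuity of the entire function $E_{\beta}$, and invoking the classical asymptotic expansion $E_{\beta}(z)=\tfrac{1}{\beta}e^{z^{1/\beta}}+O(z^{-1})$ on the positive real axis for the tail---is exactly the standard route underlying that cited result, so there is nothing to compare against and no gap to flag.
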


\begin{lemma}\textsuperscript{\cite{2016Weak}}\label{lemma2.14}
 Suppose that a nonnegative function $u(t)\geq 0$  satisfies
 \begin{equation}\label{4433}
   _{0}^{C}\textrm{D}_{t}^{\beta }u(t)+c_{1}u(t)\leq f(t)
 \end{equation}
  for almost all $t\in [0,T]$, where $c_{1}>0$, and the function $f(t)$ is nonnegative and integrable for $t\in [0,T]$. Then
  \begin{equation}\label{55}
    u(t)\leq u(0)+\frac{1}{\Gamma (\beta)}\int_{0}^{t}(t-s)^{\beta-1}f(s)ds.
  \end{equation}
 \end{lemma}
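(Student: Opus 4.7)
The statement packages three ingredients, each of which has been prepared in Sections~\ref{89iuyh}--\ref{90oiuytgv}: a uniform-in-time bound on $(u,v)$, a Duhamel representation for the deviation $w=u-\overline{u_{0}}$, and an exponential decay rate. The plan is to assemble the boundedness part from the weighted-energy estimate of Lemma~\ref{lemma4.3}, then derive the Duhamel formula directly from the $u$-equation, and finally chain Lemma~\ref{lemma4.5} with Lemma~\ref{89jhgtt} through this formula to obtain \eqref{o90875}.

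\textbf{Step 1: global existence and uniform boundedness.} Starting from Lemma~\ref{lemma4.3} with $1+p>\tfrac{n}{2}$, the fractional differential inequality
\[
 _{0}^{C}\textrm{D}_{t}^{\beta}\!\int_{\Omega} u^{p+1}\rho^{q}(v)\,dx+C_{6}\!\int_{\Omega} u^{p+1}\rho^{q}(v)\,dx\leq C_{7}\Bigl(\!\int_{\Omega} u^{\frac{1+p}{2}}dx\Bigr)^{\!2}
\]
together with the $L^{1}$ conservation $\|u(\cdot,t)\|_{L^{1}}=\|u_{0}\|_{L^{1}}$ and Lemma~\ref{lemma2.14} yields $\sup_{t\geq 0}\|u(\cdot,t)\|_{L^{p+1}(\Omega)}<\infty$. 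I would then perform the Alikakos--Moser iteration of \cite{alikakos1979application}, adapted to the Caputo setting by iterating the fractional Gronwall of Lemma~\ref{lemma2.14} rather than the classical one, to upgrade this $L^{p+1}$-bound to $\sup_{t\geq 0}\|u(\cdot,t)\|_{L^{\infty}(\Omega)}\leq C$. Combining with the strictly positive lower bound $v_{*}$ (Lemma~\ref{lemma2.2}) and the time-independent upper bound from Section~\ref{89iuyh} rules out the blow-up criterion of the existence theorem in Appendix~\ref{90oiuy}, so $T_{\max}=\infty$ and $(u,v)$ is the unique global classical solution with uniform-in-time bound.

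\textbf{Step 2: Duhamel representation.} Setting $A=(-\Delta)^{\alpha/2}$ with Neumann data so that $A\overline{u_{0}}=0$, and writing $\rho_{0}=\rho(\overline{u_{0}})$, the first equation of \eqref{1.1} rearranges to
\[
\partial_{t}^{\beta} w+\rho_{0}Aw=-A\bigl((\rho(v)-\rho_{0})u\bigr).
\]
Applying the time-fractional Duhamel principle for the Mittag-Leffler propagators $E_{\beta}(-t^{\beta}A)$ and $E_{\beta,\beta}(-t^{\beta}A)$ as in \cite{li2018cauchy} on the interval $[\tau_{0},t]$, with $\tau_{0}\geq 1$ chosen so that the Schauder bounds of Lemma~\ref{89jhgtt} are available, I obtain exactly the identity for $w(t)$ claimed in the theorem. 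This step is essentially algebraic once the semigroup framework for the Caputo--fractional Laplacian pair is in place.

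\textbf{Step 3: exponential stabilization.} Lemma~\ref{lemma4.5} already gives $\|v(\cdot,t)-\overline{u_{0}}\|_{W^{1,\infty}(\Omega)}\leq C e^{(-w)^{1/\beta}t}$ with some $w>0$. Applying $L^{\infty}$-norms to the Duhamel formula of Step~2, using the operator bound $\|E_{\beta,\beta}(-t^{\beta}A)\|_{L^{\infty}\to L^{\infty}}\leq 1/\Gamma(\beta)$ from Lemma~\ref{lemma200}, yields
\[
\|w(t)\|_{L^{\infty}}\leq \|w(\tau_{0})\|_{L^{\infty}}+\tfrac{1}{\Gamma(\beta)}\!\int_{\tau_{0}}^{t}(t-s)^{\beta-1}\|D^{\alpha}((\rho(v(s))-\rho_{0})u(s))\|_{L^{\infty}}\,ds.
\]
The integrand is controlled, via the product rule for $D^{\alpha}$ together with the uniform Schauder bound $\|D^{\alpha}u\|_{L^{\infty}}\leq C_{19}$ from Lemma~\ref{89jhgtt} and the pointwise mean-value estimate $|\rho(v)-\rho(\overline{u_{0}})|\leq C|v-\overline{u_{0}}|$, by $C(\|D^{\alpha}v\|_{L^{\infty}}+\|v-\overline{u_{0}}\|_{L^{\infty}})\lesssim e^{(-w)^{1/\beta}s}$. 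Carrying out the resulting fractional convolution and sacrificing an arbitrarily small amount of the exponential rate produces $\|w(t)\|_{L^{\infty}}\leq C_{23}e^{(-w')^{1/\beta}t}$ for any $w'<w$. Adding the $v$-bound of Lemma~\ref{lemma4.5} delivers \eqref{o90875} with $\theta=w'$.

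\textbf{Main obstacle.} The delicate point is Step~3: the operator Mittag-Leffler functions decay only algebraically in $\|A\|$, so the exponential rate cannot come from the linear propagator alone, but must be extracted from the already-exponentially-decaying forcing $(\rho(v)-\rho_{0})u$. One must show that the fractional convolution $\int_{\tau_{0}}^{t}(t-s)^{\beta-1}e^{(-w)^{1/\beta}s}ds$ inherits exponential decay up to an arbitrarily small loss, despite the singular kernel at $s=t$ and the slow polynomial weight $(t-s)^{\beta-1}$. This is the fractional analogue of the elementary fact that exponentially decaying forcing of an accretive linear equation produces an exponentially decaying solution, but it requires a careful splitting of the integral near $s=t$ and near $s=\tau_{0}$ to pass from polynomial to exponential behavior.
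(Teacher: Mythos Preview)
Your proposal does not address the stated lemma at all. The statement you were asked to prove is Lemma~\ref{lemma2.14}, a self-contained fractional Gronwall-type inequality: from $ _{0}^{C}\textrm{D}_{t}^{\beta }u(t)+c_{1}u(t)\leq f(t)$ with $u\geq 0$, $c_{1}>0$, deduce $u(t)\leq u(0)+J_{\beta}f(t)$. Instead, you have written an outline of the proof of Theorem~\ref{theorem4}, the main boundedness-and-stabilization result of the paper, which \emph{uses} Lemma~\ref{lemma2.14} as a tool (in \eqref{89076}, \eqref{4.20}, and Proposition~\ref{ijyuj90977}) but is a completely different statement. Nothing in your three steps establishes the inequality \eqref{55}; your ``Step~1'' even invokes Lemma~\ref{lemma2.14} itself, so the argument is circular relative to what you were asked to show.

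The paper gives no proof of Lemma~\ref{lemma2.14}; it is quoted from \cite{2016Weak}. The argument is a one-line observation: since $u\geq 0$ and $c_{1}>0$, the hypothesis implies $ _{0}^{C}\textrm{D}_{t}^{\beta }u(t)\leq f(t)$; applying the Riemann--Liouville integral $J_{\beta}$ (which is order-preserving because its kernel is nonnegative) and using $J_{\beta}\bigl({}_{0}^{C}\textrm{D}_{t}^{\beta }u\bigr)=u(t)-u(0)$ gives \eqref{55} directly. Your write-up, while a reasonable sketch of the paper's main theorem, needs to be replaced by this elementary computation.
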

 \begin{lemma}\textsuperscript{\cite{Ahmed2017A}}\label{lemma1}
Let $0<\beta<1$ and $u\in C([0,T],\mathbb{R}^{N}),{u}'\in L^{1}(0,T;\mathbb{R}^{N}),p\geq 2$ and $u$ be nonnegative and monotone. Then there is
\begin{equation*}
   u^{p-1}(_{0}^{C}\textrm{D}_{t}^{\beta }u) \geq \frac{1}{p}(_{0}^{C}\textrm{D}_{t}^{\beta }u^{p}).
\end{equation*}
\end{lemma}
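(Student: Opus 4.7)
The plan is to prove the inequality by a direct computation that reduces both sides to a single integral whose pointwise integrand is non-negative under the monotonicity hypothesis. Since $u\in C([0,T];\mathbb{R}^N)$ with $u'\in L^1$, the function $u$ is absolutely continuous, hence so is $u^p$, and the classical chain rule $(u^p)'=p\,u^{p-1}u'$ holds a.e. This lets me rewrite the Caputo derivative of $u^p$ via the definition \eqref{32} as
\begin{equation*}
{}_{0}^{C}\textrm{D}_{t}^{\beta }u^p(t)=\frac{1}{\Gamma(1-\beta)}\int_{0}^{t}(t-s)^{-\beta}\,p\,u^{p-1}(s)\,u'(s)\,ds.
\end{equation*}

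Next, I would subtract $\tfrac1p\,{}_{0}^{C}\textrm{D}_{t}^{\beta}u^p(t)$ from $u^{p-1}(t)\,{}_{0}^{C}\textrm{D}_{t}^{\beta}u(t)$, pulling the factor $u^{p-1}(t)$ (which is constant in the integration variable $s$) inside the integral. This yields the key identity
\begin{equation*}
u^{p-1}(t)\,{}_{0}^{C}\textrm{D}_{t}^{\beta }u(t)-\tfrac1p\,{}_{0}^{C}\textrm{D}_{t}^{\beta }u^p(t)=\frac{1}{\Gamma(1-\beta)}\int_{0}^{t}(t-s)^{-\beta}\bigl[u^{p-1}(t)-u^{p-1}(s)\bigr]u'(s)\,ds.
\end{equation*}
The right-hand side is manifestly a single integral with non-negative kernel $(t-s)^{-\beta}/\Gamma(1-\beta)$, so everything boils down to the sign of $\bigl[u^{p-1}(t)-u^{p-1}(s)\bigr]u'(s)$ on $[0,t]$.

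To close the argument, I would invoke the hypotheses $u\geq 0$, $p\geq 2$, and monotonicity. Because $p-1\geq 1$, the map $r\mapsto r^{p-1}$ is non-decreasing on $[0,\infty)$. Split into two cases: if $u$ is non-decreasing, then $u(s)\leq u(t)$ for $s\leq t$ forces $u^{p-1}(s)\leq u^{p-1}(t)$, while $u'(s)\geq 0$ a.e.; if $u$ is non-increasing, both factors flip sign simultaneously. In either case the integrand is non-negative a.e., and integration preserves the sign, giving the desired inequality. The only delicate step is the interchange of products under the integral and the justification of the chain rule for $u^p$, which is handled by the absolute continuity of $u$ guaranteed by $u'\in L^1(0,T)$; this is more a bookkeeping issue than a real obstacle, and is the main point that must be stated carefully to match the regularity assumptions of the lemma.
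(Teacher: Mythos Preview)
Your argument is correct: the identity you derive by pulling $u^{p-1}(t)$ inside the integral and subtracting is exactly the right reduction, and the monotonicity case split cleanly handles the sign of the integrand. Note, however, that the paper does not actually supply a proof of this lemma---it is quoted from \cite{Ahmed2017A} without argument---so there is no in-paper proof to compare against; your direct computation is essentially the standard proof found in that reference (and is the same mechanism behind Lemma~\ref{lemma23} for the case $p=2$).
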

\begin{lemma}\textsuperscript{\cite{Ahmed2017A}}\label{lemma0099}
    Let one of the following conditions be satisfied:\\
\textbf{(a)} $u\in C([0,T]),v\in C^{\gamma}([0,T]),\beta<\gamma\leq 1$;\\
\textbf{(b)} $v\in C([0,T]),u\in C^{\gamma}([0,T]),\beta<\gamma\leq 1$;\\
\textbf{(c)} $u\in C^{\gamma}([0,T]),v\in C^{\delta}([0,T]),\beta<\gamma+\delta,0<\gamma<1,0<\delta< 1$.\\
Then we have:
    \begin{align*}
        &_{0}\textrm{D}_{t}^{\beta }(uv)(t)=u(t)_{0}\textrm{D}_{t}^{\beta }v(t)+v(t)_{0}\textrm{D}_{t}^{\beta }u(t)\\
&\qquad-\frac{1}{\Gamma (1-\beta )}\int_{0}^{t}\frac{(u(s)-u(t))(v(s)-v(t))}{(t-s)^{\beta +1}}ds-\frac{u(t)v(t)}{\Gamma (1-\beta  )t^{\beta  }}
    \end{align*}
\end{lemma}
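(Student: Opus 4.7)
The plan is to derive this Leibniz-type identity via the Marchaud representation of the Riemann--Liouville derivative. First, I would verify (or invoke) the representation
\[
{}_{0}\textrm{D}_{t}^{\beta}w(t) \;=\; \frac{w(t)}{\Gamma(1-\beta)\,t^{\beta}} \;+\; \frac{\beta}{\Gamma(1-\beta)}\int_{0}^{t}\frac{w(t)-w(s)}{(t-s)^{\beta+1}}\,ds,
\]
which is obtained by substituting $w(s)=w(t)+(w(s)-w(t))$ inside the defining integral $\frac{1}{\Gamma(1-\beta)}\frac{d}{dt}\int_{0}^{t}(t-s)^{-\beta}w(s)\,ds$, evaluating $\int_{0}^{t}(t-s)^{-\beta}\,ds = t^{1-\beta}/(1-\beta)$, applying the Leibniz rule to differentiate under the integral, and observing that the two $w'(t)$-contributions cancel.

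Apply this representation to $w=uv$ and expand the numerator through the algebraic identity
\[
u(t)v(t)-u(s)v(s) \;=\; u(t)[v(t)-v(s)] + v(t)[u(t)-u(s)] - [u(t)-u(s)][v(t)-v(s)].
\]
Pulling the $t$-constants $u(t)$ and $v(t)$ out of their respective integrals and then reading the Marchaud representation backwards for $u$ and for $v$ separately, the first two pieces produce $u(t)\,{}_{0}\textrm{D}_{t}^{\beta}v(t) + v(t)\,{}_{0}\textrm{D}_{t}^{\beta}u(t)$ modulo two compensating copies of $\tfrac{u(t)v(t)}{\Gamma(1-\beta)t^{\beta}}$. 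Combining these with the single $\tfrac{u(t)v(t)}{\Gamma(1-\beta)t^{\beta}}$ boundary term coming from the Marchaud formula applied to $uv$ itself, exactly the advertised $-\tfrac{u(t)v(t)}{\Gamma(1-\beta)t^{\beta}}$ survives. The third (quadratic) piece becomes the stated singular integral on the right-hand side, after observing that $[u(t)-u(s)][v(t)-v(s)]=[u(s)-u(t)][v(s)-v(t)]$.

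The main obstacle will be the analytic justification of the differentiation-under-the-integral step underlying the Marchaud representation, together with the absolute convergence of every singular integral that appears. This is exactly where hypotheses (a)--(c) are used. Under (c) the Hölder estimates $|u(s)-u(t)|\le C(t-s)^{\gamma}$ and $|v(s)-v(t)|\le C(t-s)^{\delta}$ give
\[
|[u(s)-u(t)][v(s)-v(t)]|(t-s)^{-\beta-1} = O\bigl((t-s)^{\gamma+\delta-\beta-1}\bigr),
\]
which is integrable on $(0,t)$ precisely because $\gamma+\delta>\beta$, and the same estimate forces the Leibniz boundary contribution at $s=t$ to vanish. Under (a) or (b) one factor is only continuous but globally bounded on $[0,T]$, while the other is Hölder of exponent exceeding $\beta$, which yields the analogous $O((t-s)^{\gamma-\beta-1})$ integrability and a vanishing boundary term. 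Once these regularity matters are secured, the identity follows from the algebra above.
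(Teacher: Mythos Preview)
The paper does not supply its own proof of this lemma; it is quoted from \cite{Ahmed2017A} with no accompanying argument, so there is no in-paper proof to compare against. Your Marchaud-representation route is exactly the method used in that reference, and the algebraic core (the three-term splitting of $u(t)v(t)-u(s)v(s)$ followed by reassembly) is correct. One cosmetic but real point: the honest computation yields the prefactor $\tfrac{\beta}{\Gamma(1-\beta)}$ in front of the cross-term integral, not $\tfrac{1}{\Gamma(1-\beta)}$ as printed here. The factor $\beta$ has evidently been dropped in the paper's transcription, and your write-up should record the correct constant rather than claim to reproduce the stated one verbatim.

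There is, however, a genuine analytic gap in your handling of cases (a) and (b). When you ``read the Marchaud representation backwards'' for each factor separately, you are tacitly asserting that the singular integral $\int_0^t (w(t)-w(s))(t-s)^{-\beta-1}\,ds$ converges for $w=u$ and for $w=v$ individually. Under (a), say, $u$ is merely continuous, and that integral need not converge; your bound $O((t-s)^{\gamma-\beta-1})$ controls only the \emph{product} $(u(t)-u(s))(v(t)-v(s))$, not the factor $(u(t)-u(s))$ alone. Thus the companion identity for $v(t)\,{}_0\mathrm{D}_t^\beta u(t)$ via Marchaud is not justified as written. The standard fix is a density argument: establish the identity first for smooth (or jointly H\"older as in (c)) $u,v$, where every Marchaud integral converges absolutely, and then approximate the merely continuous factor uniformly, checking that each of the four terms in the identity passes to the limit. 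You should make that limiting step explicit rather than leaving it implicit in the phrase ``once these regularity matters are secured.''
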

\begin{remark}
    Through the above conclusions and reference \cite{Ahmed2017A} , we get the following two immediate consequence are:\\
\textbf{(1)} If $u$ and $v$ have the same sign and are both increasing or both decreasing, then
        \begin{equation}\label{3.133}
            _{0}\textrm{D}_{t}^{\beta }(uv)(t)\leq u(t)_{0}\textrm{D}_{t}^{\beta }v(t)+v(t)_{0}\textrm{D}_{t}^{\beta }u(t).
        \end{equation}
        \textbf{(2)} For the Caputo derivative, inequality \eqref{3.133} reads
        \begin{align}
         \nonumber   &_{0}^{C}\textrm{D}_{t}^{\beta }(uv)(t)\leq u(t)_{0}^{C}\textrm{D}_{t}^{\beta }v(t)+v(t)_{0}^{C}\textrm{D}_{t}^{\beta }u(t)\\
           &\qquad +\frac{t^{-\beta }}{\Gamma (1-\beta )}(u(t)v(0)+v(t)u(0)-u(0)v(0)).
        \end{align}

\end{remark}

\subsection{ Some functional inequalities related to  fractional Laplacian }\label{i9io87h}
\begin{lemma}\textsuperscript{\cite{Ahmed2017A}}
    If $u,v\in C_{0}^{\infty}(\mathbb{R}^{n})$ and $0\leq \alpha \leq 2$, then
    \begin{align}\label{39087}
   \nonumber     &u(-\Delta )^{\frac{\alpha }{2}}v+v(-\Delta )^{\frac{\alpha }{2}}u-(-\Delta )^{\frac{\alpha }{2}}(uv)\\
        &=A_{\alpha /2}\int _{\mathbb{R}^{n}}\frac{(u(x)-u(y))(v(x)-v(y))}{\left | x-y \right |^{n+\alpha }}dy
    \end{align}
    where $A_{\alpha /2}>0$ and $\frac{A_{\alpha /2}}{\alpha /2(1-\alpha /2)}$ has finite,positive limits as $\alpha \rightarrow 0$ and $\alpha \rightarrow 2$.
\end{lemma}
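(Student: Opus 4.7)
The identity is purely algebraic once we use the singular-integral representation of $(-\Delta)^{\alpha/2}$ already recorded in \eqref{4}. For $u,v\in C_{0}^{\infty}(\mathbb{R}^{n})$ the three terms on the left-hand side can each be written as a principal-value integral against the kernel $|x-y|^{-n-\alpha}$. My plan is to pool them under a single integral sign and show that the numerators collapse to the claimed product $(u(x)-u(y))(v(x)-v(y))$ by elementary expansion.

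Concretely, I would first substitute
$$(-\Delta)^{\alpha/2}u(x)=A_{\alpha/2}\,\text{P.V.}\!\int_{\mathbb{R}^{n}}\frac{u(x)-u(y)}{|x-y|^{n+\alpha}}\,dy,$$
and the analogous formulas for $v$ and for $uv$, into the left-hand side. Multiplying the first through by $v(x)$ and the second by $u(x)$ (which pass inside the $y$-integral as constants in $y$) and collecting terms over the common denominator, the numerator becomes
$$v(x)(u(x)-u(y))+u(x)(v(x)-v(y))-(u(x)v(x)-u(y)v(y)),$$
which after cancellation of the $u(x)v(x)$ contributions equals $u(x)v(x)-u(x)v(y)-u(y)v(x)+u(y)v(y)=(u(x)-u(y))(v(x)-v(y))$. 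This is the desired pointwise formula.

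The one subtlety worth checking is that pooling three principal-value integrals into one is legitimate. Because $u,v\in C_{0}^{\infty}$, a Taylor expansion at $y=x$ gives $(u(x)-u(y))(v(x)-v(y))=O(|x-y|^{2})$, so the combined integrand is $O(|x-y|^{2-n-\alpha})$ near $y=x$; since $\alpha<2$, this is absolutely integrable and the P.V.\ symbol may be dropped, while integrability at infinity is guaranteed by the compact support of $u$ and $v$. The main (and only real) obstacle is therefore bookkeeping: making sure the three separate P.V.\ integrals, which individually are only conditionally convergent, may indeed be added. This is handled by truncating the domain to $|x-y|>\varepsilon$, performing the algebraic cancellation on each truncated integral (which is now absolutely convergent and hence reshufflable), and then passing $\varepsilon\downarrow0$ using the pointwise bound above together with dominated convergence.

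Finally, the assertion about the constant $A_{\alpha/2}$ follows directly from the explicit formula $C_{n,\alpha}=\frac{2^{\alpha-1}\alpha\,\Gamma((n+\alpha)/2)}{\Gamma(1-\alpha/2)\,\pi^{\alpha/n}}$ recalled after \eqref{4}. The factor $\alpha$ in the numerator produces the zero at $\alpha=0$, while the pole of $\Gamma(1-\alpha/2)$ at $\alpha=2$ produces a simple zero of $C_{n,\alpha}$ there; dividing by $\frac{\alpha}{2}(1-\frac{\alpha}{2})$ removes both zeros and leaves a strictly positive, finite limit at each endpoint, as claimed.
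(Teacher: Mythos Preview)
Your argument is correct and complete: the algebraic identity
\[
v(x)\bigl(u(x)-u(y)\bigr)+u(x)\bigl(v(x)-v(y)\bigr)-\bigl(u(x)v(x)-u(y)v(y)\bigr)=\bigl(u(x)-u(y)\bigr)\bigl(v(x)-v(y)\bigr)
\]
together with the singular-integral representation \eqref{4} gives the formula immediately, and your $\varepsilon$-truncation plus the $O(|x-y|^{2})$ bound legitimately disposes of the principal-value bookkeeping. The endpoint analysis of $A_{\alpha/2}=C_{n,\alpha}$ via the explicit zero from the factor $\alpha$ and the simple pole of $\Gamma(1-\alpha/2)$ at $\alpha=2$ is also right.

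Note, however, that the paper itself does \emph{not} prove this lemma: it is quoted from \cite{Ahmed2017A} and stated without argument in the appendix. So there is no ``paper's own proof'' to compare against; your direct computation simply supplies what the paper takes for granted from the literature, and is exactly the standard derivation one finds in the cited source.
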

\begin{remark}
    If $u$ and $v$ have the same sign and are both increasing or both decreasing, then the equation \eqref{39087} has the following inequality
    \begin{equation}\label{556678}
        (-\Delta )^{\frac{\alpha }{2}}(uv)\leq u(-\Delta )^{\frac{\alpha }{2}}v+v(-\Delta )^{\frac{\alpha }{2}}u.
    \end{equation}
\end{remark}

\begin{lemma}\textsuperscript{\cite{Ahmed2017A}}\label{lemmader}
  Let $\theta\in C_{0}^{2}(\mathbb{R}^{n})$ and $\Phi $ be a convex function of one variable. Then
  \begin{equation}\label{33.11}
     {\Phi }'(\theta )(-\Delta )^{\frac{\alpha }{2}}\theta (x)\geq (-\Delta )^{\frac{\alpha }{2}}\Phi (\theta )(x).
  \end{equation}
\end{lemma}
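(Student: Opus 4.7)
The argument proceeds in three stages paralleling Sections \ref{89iuyh}--\ref{90oiuytgv}. The plan for the uniform-in-time boundedness is to start from Lemma \ref{lemma4.3}, which already gives $\sup_{t\ge 0}\|u(\cdot,t)\|_{L^{1+p}} < \infty$ for some $p$ with $1+p > n/2$, and then bootstrap via an Alikakos--Moser iteration applied to the weighted energy inequality of Lemma \ref{897hhg}. At each level $p_k = 2^k p_0$, I would combine the Nash--Gagliardo--Nirenberg inequality with the fractional Gronwall estimate Lemma \ref{lemma2.14} to obtain a time-independent $L^{p_k}$ bound whose $1/p_k$-th root stays bounded as $k \to \infty$; the pointwise bounds $v_* \le v \le v^*$ from Section \ref{89iuyh} keep the weight $\rho^q(v)$ in Lemma \ref{897hhg} under control uniformly along the iteration. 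Elliptic regularity applied to $(I+(-\Delta)^{\alpha/2})v = u$ then lifts the uniform $L^\infty$ bound on $u$ to a uniform $W^{2,q}$ bound on $v$ for every $q<\infty$, hence to $W^{1,\infty}$.

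For the $L^2$-type decay, the plan is to exploit mass conservation $\int_\Omega v(t)\,dx = \int_\Omega u_0\,dx$: differentiating in the Caputo sense gives $_{0}^{C}\textrm{D}_{t}^{\beta}\int_\Omega v\,dx = 0$, so the Lyapunov inequality \eqref{3.1} of Lemma \ref{lemma3.1} may be rewritten with $v$ replaced by $v - \overline{u_0}$. Combining $\rho(v) \ge \rho(v^*)$ with the Hardy--Littlewood--Sobolev type spectral-gap estimate $\mu_1\|v - \overline{u_0}\|^2 \le \|D^{\alpha/2}v\|^2$ closes the estimate to
\begin{equation*}
_{0}^{C}\textrm{D}_{t}^{\beta}\bigl(\|D^{\alpha/2}v\|^2 + \|v - \overline{u_0}\|^2\bigr) + 2\lambda\bigl(\|D^{\alpha/2}v\|^2 + \|v - \overline{u_0}\|^2\bigr) \le 0.
\end{equation*}
Lemmas \ref{Gao}--\ref{lemma390} then yield a stretched-exponential bound $\|v(t) - \overline{u_0}\|_{H^1}^2 \le C e^{(-\lambda)^{1/\beta} t}$. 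Testing \eqref{4.17} against $(u-\overline{u_0})$ and invoking Lemma \ref{lemma1} produces a companion fractional differential inequality of the form $_{0}^{C}\textrm{D}_{t}^{\beta}\|u - \overline{u_0}\|^2 + c\|u - \overline{u_0}\|^2 \le C\|D^{\alpha/2}v\|^2$, which via Lemma \ref{lemma2.14} transfers the decay to $\|u - \overline{u_0}\|_{L^2}$ at an arbitrarily slightly slower rate; higher $L^p$ norms then follow by the same device with $|u-\overline{u_0}|^{p-2}(u-\overline{u_0})$ as test function.

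The third and most delicate stage upgrades this $L^2$ decay to the target $L^\infty \times W^{1,\infty}$ decay through the Duhamel representation displayed in the theorem. Viewing \eqref{4.17} as $\partial_t^\beta w + Aw = -A\bigl((\rho(v)-\rho_0)u\bigr)$ with $A = (-\Delta)^{\alpha/2}$ and $\rho_0 = \rho(\overline{u_0})$ produces the stated formula. Taking $L^\infty$ norms, using the Mittag--Leffler bounds of Lemma \ref{lemma200}, the Leibniz-type inequality \eqref{556678}, Lemma \ref{lemmader}, and the mean-value estimate $|\rho(v) - \rho_0| \le C|v - \overline{u_0}|$ (valid since $\rho$ is $C^1$ on the compact interval $[v_*,v^*]$), the integrand reduces to $\|D^\alpha v(s)\|_{L^\infty} + \|v(s) - \overline{u_0}\|_{L^\infty}$. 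The first factor is time-independently bounded by the Schauder estimate in Lemma \ref{89jhgtt}, and the second is controlled via elliptic regularity applied to $(I+A)(v-\overline{u_0}) = u-\overline{u_0}$ by $\|u - \overline{u_0}\|_{L^{p_0}}$ for $p_0 > n$, which interpolates between the known $L^2$-decay and uniform $L^\infty$-boundedness to inherit the stretched-exponential rate. A splitting of the Duhamel convolution at $s = t/2$ then shows that the singular kernel $(t-s)^{\beta-1}$ does not destroy the rate, delivering \eqref{o90875}.

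\textbf{Main obstacle.} The hardest point is this final Duhamel step: the kernel $(t-s)^{\beta-1}$ is singular at $s = t$ and the Mittag--Leffler semigroups $E_\beta(-t^\beta A)$, $E_{\beta,\beta}(-t^\beta A)$ only decay algebraically in $t$, so naively one would recover only polynomial rates from a polynomially-decaying source. The stretched exponential $e^{(-\theta)^{1/\beta}t}$ coming from stage two is strong enough to survive the convolution with an arbitrarily small loss in $\theta$, but verifying this requires the time-splitting argument combined with the sharp Mittag--Leffler bound in Lemma \ref{lemma200}; the interpolation step between $L^2$-decay and $L^\infty$-boundedness also costs a small fraction of the exponent, which is why the final rate $\theta$ in \eqref{o90875} must be chosen strictly less than the spectral gap produced in stage two.
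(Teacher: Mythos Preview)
Your proposal does not address the stated lemma at all. Lemma~\ref{lemmader} is the pointwise convexity (C\'ordoba--C\'ordoba) inequality
\[
\Phi'(\theta)(-\Delta)^{\alpha/2}\theta(x) \;\ge\; (-\Delta)^{\alpha/2}\Phi(\theta)(x),
\]
a purely local statement about the fractional Laplacian and a convex function $\Phi$. What you have written instead is a proof plan for Theorem~\ref{theorem4}, the main boundedness and stabilization result of the paper. Nothing in your three stages touches the actual claim of Lemma~\ref{lemmader}; you even cite Lemma~\ref{lemmader} as an ingredient in stage three, so you are using the very statement you were asked to prove.

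The correct argument is a two-line computation from the singular-integral representation \eqref{4}: for convex $\Phi$ one has $\Phi(\theta(x))-\Phi(\theta(y)) \le \Phi'(\theta(x))\bigl(\theta(x)-\theta(y)\bigr)$ for every $y$, and dividing by $|x-y|^{n+\alpha}$ and integrating in $y$ yields \eqref{33.11} directly. This is how the cited reference \cite{Ahmed2017A} proves it, and the paper simply quotes the result without reproducing the proof.
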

\begin{lemma}\textsuperscript{\cite{de2012general}}\label{lemmagn}
    (Nash-Gagliardo-Nirenberg-Type Inequality). Let $p\leq 1,r>1$, and $0<\alpha<2$. There is a constant $C=C(p,r,\alpha,n)>0$ such that for any $\xi\in L^{p}(\mathbb{R}^{n})$ with $(-\Delta )^{\alpha/4 }\xi\in L^{r}(\mathbb{R}^{n})$ we have
    \begin{equation}
        \left \| \xi \right \|_{r_{2}}^{\theta +1}\leq C\left \| (-\Delta )^{\alpha/4 }\xi  \right \|_{r}\left \| \xi \right \|_{p}^{\theta },\quad r_{2}=\frac{n(rp+r-p)}{r(n-\alpha /2)},\,\theta =\frac{p(r-1)}{r}.
    \end{equation}
\end{lemma}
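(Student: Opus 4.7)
The plan is to derive the stated inequality by combining the subcritical fractional Sobolev embedding with the standard $L^p$ interpolation (Lyapunov) inequality, following the classical route for Nash--Gagliardo--Nirenberg estimates but carried out at the fractional level $\alpha/2$. First I would invoke the fractional Sobolev embedding
$$\|\xi\|_{L^{r^*}(\mathbb{R}^n)} \leq C\|(-\Delta)^{\alpha/4}\xi\|_{L^r(\mathbb{R}^n)}, \qquad r^{*}=\frac{nr}{n-r\alpha/2},$$
valid under $r\alpha/2<n$; this follows from the Riesz-potential representation $\xi=I_{\alpha/2}((-\Delta)^{\alpha/4}\xi)$ together with the Hardy--Littlewood--Sobolev inequality.

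Second, I would apply Hölder/Lyapunov interpolation between $L^p$ and $L^{r^*}$: for $\mu\in[0,1]$ and $q$ given by $1/q=\mu/p+(1-\mu)/r^*$,
$$\|\xi\|_{L^q} \leq \|\xi\|_{L^p}^{\mu}\|\xi\|_{L^{r^*}}^{1-\mu}.$$
Raising to the $(\theta+1)$-th power and substituting the Sobolev estimate gives
$$\|\xi\|_{L^q}^{\theta+1} \leq C\|\xi\|_{L^p}^{\mu(\theta+1)}\|(-\Delta)^{\alpha/4}\xi\|_{L^r}^{(1-\mu)(\theta+1)}.$$
To match the target exponents $\theta$ and $1$ I would choose $\mu=\theta/(\theta+1)$, so that $\mu(\theta+1)=\theta$ and $(1-\mu)(\theta+1)=1$. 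With $\theta=p(r-1)/r$ one obtains $\mu=p(r-1)/(pr+r-p)$, and a direct computation gives
$$\frac{1}{q} = \frac{r-1}{pr+r-p}+\frac{n-r\alpha/2}{n(pr+r-p)} = \frac{r(n-\alpha/2)}{n(pr+r-p)},$$
which is exactly $1/r_2$. Thus $q=r_2$ and the desired estimate follows.

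The main obstacle is not analytic depth but algebraic bookkeeping: one must verify that the interpolation exponent coincides with $r_2=n(rp+r-p)/(r(n-\alpha/2))$, that $\mu=\theta/(\theta+1)\in[0,1]$ (equivalently, $p\leq r_2\leq r^*$, so that Hölder interpolation is admissible), and that the subcritical regime $r\alpha/2<n$ required by the Sobolev embedding is compatible with the hypotheses of the lemma. Outside this subcritical regime, the endpoint $r^*$ must be replaced by a BMO or Hölder target, which lies beyond the scope of the inequality as stated. A secondary nuisance is the hypothesis $p\leq 1$ as written, which appears to be a typographical slip: the Hölder step naturally requires $p\geq 1$, and I would read the statement in this corrected form. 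Density of Schwartz functions $\mathscr{S}(\mathbb{R}^n)$ finally promotes the smooth-case estimate to general $\xi\in L^p$ with $(-\Delta)^{\alpha/4}\xi\in L^r$.
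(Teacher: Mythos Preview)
The paper does not give its own proof of this lemma: it is stated in the appendix as a known result, with a citation to de~Pablo et~al.\ (reference \cite{de2012general}), and no argument is supplied. There is therefore nothing in the paper to compare your proposal against.

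That said, your route is the standard one and is correct. The combination of the fractional Sobolev (Hardy--Littlewood--Sobolev) embedding $\|\xi\|_{r^{*}}\leq C\|(-\Delta)^{\alpha/4}\xi\|_{r}$ with H\"older interpolation between $L^{p}$ and $L^{r^{*}}$, taking $\mu=\theta/(\theta+1)$, does reproduce the stated exponents; your computation $1/q=r(n-\alpha/2)/(n(rp+r-p))=1/r_{2}$ checks out. Your observation that the printed hypothesis $p\leq 1$ is almost certainly a misprint for $p\geq 1$ is also correct: the interpolation step needs $p\geq 1$, and the cited source states the inequality under $p\geq 1$.
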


\begin{lemma}\textsuperscript{\cite{hardy1928some}}\label{lemmaH}
    (Hardy-Little-wood-Sobolev Inequality) For every $v$ such that $(-\Delta )^{\alpha/4 }v\in L^{2}(\mathbb{R}^{n}),0<\alpha<2$, it hold that
    \begin{equation*}
        \left \| v \right \|_{r_{1}}\leq c(n,\alpha )\left \| (-\Delta )^{\alpha /4}v\right \|_{2},\quad r_{1}=\frac{2n}{n-\alpha }.
    \end{equation*}
\end{lemma}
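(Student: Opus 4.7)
The plan is to assemble the theorem from three ingredients already developed in Sections 3 and 4: uniform-in-time boundedness via weighted energy estimates and Alikakos--Moser iteration, a fractional Duhamel representation obtained by linearizing the $u$-equation around the constant state $\overline{u_0}$, and exponential stabilization obtained by feeding the Lyapunov decay of $v$ into that Duhamel formula. I would treat these in the same order they appear in the statement.

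For the boundedness piece, I first invoke Lemma \ref{0oplkjh} together with Lemma \ref{lemma3.3} to secure a time-independent upper bound $v \le v^{*}$ under (H0) and (H1), complementing the time-independent lower bound $v_{*}$. Inserting these two-sided bounds on $v$ into the weighted-energy identity of Lemma \ref{897hhg} and selecting $p,q$ as in Lemma \ref{lemma4.3} with $1+p > n/2$, the Nash--Gagliardo--Nirenberg inequality of Lemma \ref{lemmagn} combined with the fractional differential inequality of Lemma \ref{lemma2.14} yields $\sup_{t\ge 0}\int_\Omega u^{1+p}dx < \infty$. A standard Alikakos--Moser iteration adapted to the Caputo setting, driven by Lemma \ref{lemma1} at each level $p_k = 2^k p_0$, then upgrades this to $\sup_{t\ge 0}\|u(\cdot,t)\|_{L^\infty(\Omega)} < \infty$. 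The local-existence/extension criterion recalled in Appendix \ref{90oiuy} then forces $T_{\max}=\infty$ and delivers a unique global classical solution that is uniformly-in-time bounded.

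For the Duhamel representation and subsequent decay, I start from \eqref{4.17}, namely $\partial_t^\beta w = -A(\rho(v)u)$ with $A = (-\Delta)^{\alpha/2}$ and $w = u-\overline{u_0}$. Writing $\rho(v)u = \rho_0 u + (\rho(v)-\rho_0)u$ and using $A\overline{u_0} = 0$, this recasts as a Caputo evolution of the form $\partial_t^\beta w + Aw = -A[(\rho(v)-\rho_0)u]$ (after absorbing $\rho_0$ into the operator as in [Li-Cauchy, (2.21)]). Standard variation-of-parameters for Caputo equations via the Mittag--Leffler calculus of Definition \ref{proposition 2.3} and Lemma \ref{lemma200} then produces exactly the identity stated in the theorem. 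To extract the exponential rate I take $L^\infty$ norms in the Duhamel identity, using $\|E_{\beta,\beta}(-(t-s)^\beta A)\|_{L^\infty\to L^\infty}\le C/\Gamma(\beta)$ from Lemma \ref{lemma200}, expand $A[(\rho(v)-\rho_0)u]$ via the Leibniz-type formula \eqref{39087} into $(\rho(v)-\rho_0)D^\alpha u + \rho'(v)u\,D^\alpha v$ plus a commutator, and control $D^\alpha u$ and $D^\alpha v$ in $L^\infty$ through the Schauder-type bound of Lemma \ref{89jhgtt}. The mean-value estimate $|\rho(v)-\rho_0|\le C|v-\overline{u_0}|$ then reduces the source to order $\|v-\overline{u_0}\|_{W^{1,\infty}}$, which by Lemma \ref{lemma4.5} already decays like $e^{(-w)^{1/\beta}t}$. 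Combining this with the decay estimate for $v$ from Lemma \ref{lemma4.5} delivers \eqref{o90875} with $\theta = w'$ for any $w' < w$.

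The main obstacle is precisely the final convolution step. Since the Mittag--Leffler kernel $E_{\beta,\beta}(-(t-s)^\beta A)$ decays only algebraically (Lemma \ref{lemma200}), squeezing an exponential-in-$t$ rate out of $\int_{\tau_0}^t (t-s)^{\beta-1}e^{(-w)^{1/\beta}s}ds$ requires splitting the integral at $t/2$ and absorbing the polynomial weight into a slightly smaller exponential rate, which is what forces the strict loss $w' < w$ recorded in the conclusion. Ancillary technical difficulties are the uniform-in-time $L^\infty$ bound on $D^\alpha u$, which depends on the $C^{\theta,\theta/2}$ regularity chain built in Lemma \ref{89jhgtt} through time-fractional Schauder estimates, and ensuring that the linearization $\rho(v)u = \rho_0 w + (\rho(v)-\rho_0)u$ is compatible with the domain of $A$ in the Mittag--Leffler functional calculus so that the Duhamel formula is justified rigorously rather than merely formally.
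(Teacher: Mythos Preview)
Your proposal addresses the wrong statement. The lemma in question is Lemma~\ref{lemmaH}, the Hardy--Littlewood--Sobolev inequality
\[
\|v\|_{r_1}\le c(n,\alpha)\,\bigl\|(-\Delta)^{\alpha/4}v\bigr\|_{2},\qquad r_1=\tfrac{2n}{n-\alpha},
\]
which the paper merely \emph{cites} from \cite{hardy1928some} as a classical tool and does not prove at all. Your write-up is instead an outline of the proof of Theorem~\ref{theorem4} (global boundedness plus exponential stabilization), drawing on Lemmas~\ref{0oplkjh}, \ref{lemma3.3}, \ref{897hhg}, \ref{lemma4.3}, \ref{lemma4.5}, and \ref{89jhgtt}. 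None of that machinery---weighted energies, Alikakos--Moser iteration, Duhamel representations, Mittag--Leffler bounds---is relevant to establishing the Sobolev-type embedding stated in Lemma~\ref{lemmaH}.

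If the intent was really to supply a proof of Lemma~\ref{lemmaH}, the standard route is entirely different: write $v=I_{\alpha/2}\bigl((-\Delta)^{\alpha/4}v\bigr)$ via the Riesz potential $I_{\alpha/2}$ and apply the classical Hardy--Littlewood--Sobolev inequality for Riesz potentials (Stein~\cite{stein1970singular}, Chapter~V), which maps $L^2(\mathbb{R}^n)$ into $L^{2n/(n-\alpha)}(\mathbb{R}^n)$. The paper, however, simply quotes the result, so no proof is expected here. If instead you meant to submit this as a proof of Theorem~\ref{theorem4}, it should be relabeled accordingly; as a proof of Lemma~\ref{lemmaH} it is a non sequitur.
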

\begin{lemma}\textsuperscript{\cite{bonforte2014quantitative}}\label{lemmawer}
    (Stroock-Varopoulos' inequality)\,Let $0<\frac{\alpha}{2}<1,p>1$, then
    \begin{equation}
        -\int _{\Omega }\left | f \right |^{p-2}fD^{\alpha }fdx\leq -\frac{4(p-1)}{p^{2}}\left \| D^{\frac{\alpha }{2}}f^{\frac{p }{2}} \right \|_{2}^{2}
    \end{equation}
    for all $f\in L^{p}(\Omega)$ such that $D^{\alpha}f\in L^{p}(\Omega).$
\end{lemma}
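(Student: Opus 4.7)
The plan is to establish the Stroock--Varopoulos inequality via the singular-integral representation of the fractional Laplacian combined with an elementary pointwise convexity inequality. The strategy is standard in fractional calculus: convert the left-hand side into a symmetric double integral, apply a pointwise estimate to the integrand, and then recognize the resulting double integral as the Gagliardo seminorm of $f^{p/2}$.

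First, I would use the formula \eqref{4} together with integration by parts (i.e.\ the well-known symmetrization of the quadratic form associated with $(-\Delta)^{\alpha/2}$) to write
\begin{equation*}
\int_{\Omega}|f|^{p-2}f\, D^{\alpha}f\,dx \;=\; \frac{C_{n,\alpha}}{2}\int_{\Omega}\!\!\int_{\Omega}\frac{\bigl(|f(x)|^{p-2}f(x)-|f(y)|^{p-2}f(y)\bigr)\bigl(f(x)-f(y)\bigr)}{|x-y|^{n+\alpha}}\,dx\,dy,
\end{equation*}
so that, up to the same constant $C_{n,\alpha}$ that appears in the identity
$\|D^{\alpha/2} g\|_{2}^{2}=\frac{C_{n,\alpha}}{2}\iint (g(x)-g(y))^{2}|x-y|^{-n-\alpha}dx\,dy$
for $g=f^{p/2}$, the whole inequality reduces to a pointwise statement about real numbers.

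Next, I would prove the key algebraic inequality: for all $a,b\ge 0$ and $p>1$,
\begin{equation*}
(a-b)\bigl(a^{p-1}-b^{p-1}\bigr) \;\geq\; \frac{4(p-1)}{p^{2}}\bigl(a^{p/2}-b^{p/2}\bigr)^{2}.
\end{equation*}
This follows from the identity $a^{p/2}-b^{p/2}=\int_{b}^{a}\tfrac{p}{2}s^{p/2-1}\,ds$ together with the Cauchy--Schwarz inequality
$\bigl(\int_{b}^{a}\tfrac{p}{2}s^{p/2-1}\,ds\bigr)^{2}\leq (a-b)\int_{b}^{a}\tfrac{p^{2}}{4}s^{p-2}\,ds = (a-b)\tfrac{p^{2}}{4(p-1)}(a^{p-1}-b^{p-1})$, rearranged. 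For signed $f$, the analogous statement with $|a|^{p-2}a$ and $|a|^{p/2-1}a$ in place of $a^{p-1}$ and $a^{p/2}$ holds by splitting into the cases $ab\ge 0$ (reducing to the nonnegative case after sign adjustment) and $ab<0$ (where the left-hand side dominates the nonnegative case trivially).

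Finally, I would substitute this pointwise bound into the symmetrized double integral, pull out the factor $4(p-1)/p^{2}$, and identify the remaining double integral as $(2/C_{n,\alpha})\|D^{\alpha/2} f^{p/2}\|_{2}^{2}$; moving the minus sign across yields the claim. The main obstacle is conceptual rather than technical: one must be careful about the constant $C_{n,\alpha}$ matching on both sides (it does, because the same normalization governs the singular-integral representations of $D^{\alpha}$ and of the quadratic form for $D^{\alpha/2}$) and about the signed version of the pointwise inequality, which requires the split into same-sign and opposite-sign subcases to ensure the bound holds without the nonnegativity assumption on $f$.
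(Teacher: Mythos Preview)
Your argument is the standard and correct proof of the Stroock--Varopoulos inequality: symmetrize the bilinear form via the singular-integral representation, apply the elementary pointwise estimate $(a-b)(a^{p-1}-b^{p-1})\geq \tfrac{4(p-1)}{p^{2}}(a^{p/2}-b^{p/2})^{2}$, and identify the resulting Gagliardo seminorm of $f^{p/2}$. The constant match you flag is indeed automatic, since both sides come from the same symmetrization of the quadratic form for $(-\Delta)^{\alpha/2}$.

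Note, however, that the paper does \emph{not} give its own proof of this lemma: it is stated with a citation to \cite{bonforte2014quantitative} and used as a black box (in the proof of Lemma~\ref{lemma3.3}, applied to the nonnegative function $v$). So there is no ``paper's proof'' to compare against; your write-up is in fact the proof one finds in that reference and in most treatments of the inequality. One small caveat worth flagging in your final version: the paper states the inequality on a bounded domain $\Omega$ while the kernel formula~\eqref{4} is written on $\mathbb{R}^{n}$, so you should either work on $\mathbb{R}^{n}$ throughout (which is what the cited reference does) or remark that the same bilinear-form argument carries over to the spectral/regional fractional Laplacian on $\Omega$ with the Neumann condition used in the paper; for the application in Lemma~\ref{lemma3.3} only the nonnegative case is needed, so your signed-case discussion, while correct, is not strictly required here.
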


\begin{lemma}\textsuperscript{\cite{ taylorremarks}}\label{lemma2.4}\\
\textbf{(i)} Suppose that $e^{-tA}$ is a contraction semi-group in a Banach space, where $A$ is the generator of the semigroup. Then,
    \begin{equation*}
        \left \| E_{\beta }(-t^{\beta }A) f\right \|_{B}\leq \left \| f \right \|_{B}, \qquad   \left \| E_{\beta,\beta  }(-t^{\beta }A) f\right \|_{B}\leq \frac{1}{\Gamma (\beta )}\left \| f \right \|_{B}.
    \end{equation*}
\textbf{(ii)} Let $0<\alpha\leq 2$ and $A=(-\Delta )^{\alpha/2}$. If $1<p<\infty$ and $\sigma \in (0,1]$, then for $T_{0}>0,$ there exists $C>0$ such that
     \begin{equation*}
        \left \| E_{\beta }(-t^{\beta }A) f\right \|_{H^{\sigma \alpha ,p}}\leq Ct^{-\sigma \beta }\left \| f \right \|_{p}, \quad   \left \| E_{\beta,\beta  }(-t^{\beta }A) f\right \|_{H^{\sigma \alpha ,p}}\leq Ct^{-\sigma \beta }\left \| f \right \|_{p}.
    \end{equation*}
    uniformly for $t\in (0,T_{0}].$
\end{lemma}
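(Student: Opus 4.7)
The plan is to assemble the three ingredients already developed in Sections \ref{89iuyh}--\ref{90oiuytgv} into the statement of Theorem \ref{theorem4}, in the order: global boundedness, Duhamel representation, exponential stabilization.

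\textbf{Global boundedness.} Under (H0), (H1), (H3), Lemma \ref{lemma2.2} yields a time-independent positive lower bound $v_{*}$ for $v$, while Lemma \ref{0oplkjh}, Lemma \ref{lemma3.1} and Lemma \ref{lemma3.3} jointly produce a time-independent upper bound $v^{*}$, so $0<v_*\leq v\leq v^*<\infty$. Plugging these two-sided bounds into Lemma \ref{897hhg} and then into Lemma \ref{lemma4.3} gives $\sup_{t\geq 0}\|u(\cdot,t)\|_{L^{p+1}(\Omega)}\le C$ for some $p>\frac{n}{2}-1$. To promote this to a uniform $L^\infty$ bound on $u$, I would iterate Lemma \ref{897hhg} along the dyadic sequence $p_k=2^k(p+1)$ in the Alikakos--Moser style \cite{alikakos1979application}, using the Stroock--Varopoulos inequality (Lemma \ref{lemmawer}) to absorb $\|D^{\alpha/2}u^{p_k/2}\|_{L^2}^2$ and the Nash--Gagliardo--Nirenberg inequality (Lemma \ref{lemmagn}) to close the induction with $k$-independent constants. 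Uniqueness and global continuation then follow from the blow-up criterion recorded in Appendix \ref{90oiuy}.

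\textbf{Duhamel representation.} Starting from \eqref{4.17}, namely $\partial_t^\beta w=-A(\rho(v)u)$ with $w=u-\overline{u_0}$ and $A=(-\Delta)^{\alpha/2}$, I would use $A\overline{u_0}=0$ to rewrite the right-hand side as $-Aw-A((\rho(v)-\rho_0)u)$ up to mean-zero corrections, so that
\[
\partial_t^\beta w + A w = -A\bigl((\rho(v)-\rho_0)u\bigr).
\]
Viewing this as a fractional Cauchy problem posed on $t\geq \tau_0$ with initial datum $w(\tau_0)$, the standard Mittag--Leffler solution formula from \cite{li2018cauchy,kemppainen2017representation} then gives exactly the representation displayed in the theorem.

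\textbf{Exponential stabilization.} Lemma \ref{lemma4.5} already supplies $\|v-\overline{u_0}\|_{W^{1,\infty}(\Omega)}\leq Ce^{(-\theta)^{1/\beta}t}$. To obtain the analogous bound on $u$, I would take the $L^\infty$-norm of the Duhamel identity and apply Lemma \ref{lemma2.4}(i), obtaining
\[
\|w(t)\|_{L^\infty}\leq \|w(\tau_0)\|_{L^\infty}+\frac{1}{\Gamma(\beta)}\int_{\tau_0}^{t}(t-s)^{\beta-1}\bigl\|A\bigl((\rho(v(s))-\rho_0)u(s)\bigr)\bigr\|_{L^\infty}ds.
\]
The integrand is bounded by expanding $A$ via the product rule of Lemma \ref{lemmader}, invoking the uniform $C^{2+\theta,1+\theta/2}$-regularity of $u$ from Lemma \ref{89jhgtt} and the mean-value bound $|\rho(v)-\rho_0|\leq C|v-\overline{u_0}|$; this majorises the integrand by $Ce^{(-\theta)^{1/\beta}s}$. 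Combining with the bound for $v$ yields \eqref{o90875}.

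\textbf{Main obstacle.} The delicate point is preserving the exponential rate through the Riemann--Liouville convolution $\int_{\tau_0}^{t}(t-s)^{\beta-1}e^{(-\theta)^{1/\beta}s}ds$: the kernel is singular at $s=t$ and only polynomially heavy away from it, so naive majorisation loses the exponent and forces one to settle for a slightly sub-optimal rate. I would handle this by splitting $[\tau_0,t]$ at $t/2$, using Lemma \ref{lemma2.4}(i) on the near-singular piece and the smoothing estimate Lemma \ref{lemma2.4}(ii) on the far piece, and choose $\theta'<\theta$ so as to absorb the polynomial losses, which is why the exponent in \eqref{o90875} is generic rather than explicit.
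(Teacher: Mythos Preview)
Your proposal does not address the statement you were asked to prove. The statement is Lemma~\ref{lemma2.4}, which asserts operator-norm bounds for the Mittag--Leffler operators $E_{\beta}(-t^{\beta}A)$ and $E_{\beta,\beta}(-t^{\beta}A)$: part~(i) is a contraction estimate in a general Banach space, part~(ii) is a smoothing estimate for $A=(-\Delta)^{\alpha/2}$. What you have written is instead an outline of the proof of Theorem~\ref{theorem4}, the main result of the paper, in which Lemma~\ref{lemma2.4} is merely \emph{invoked} as a black box (you cite it explicitly in your ``Exponential stabilization'' and ``Main obstacle'' paragraphs). None of the steps you describe---Alikakos--Moser iteration, Duhamel representation for $w=u-\overline{u_0}$, decay of $\|v-\overline{u_0}\|_{W^{1,\infty}}$---bear on the operator inequalities that Lemma~\ref{lemma2.4} claims.

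In the paper itself, Lemma~\ref{lemma2.4} is not proved at all: it is quoted from the external reference \cite{taylorremarks} and used as a tool. A genuine proof would proceed via the subordination formula
\[
E_{\beta}(-t^{\beta}A)f=\int_{0}^{\infty}\Phi_{\beta}(\tau)\,e^{-\tau t^{\beta}A}f\,d\tau,
\qquad
E_{\beta,\beta}(-t^{\beta}A)f=\int_{0}^{\infty}\beta\tau\,\Phi_{\beta}(\tau)\,e^{-\tau t^{\beta}A}f\,d\tau,
\]
where $\Phi_{\beta}$ is the Wright-type probability density. Part~(i) then follows because $\|e^{-sA}f\|_{B}\le\|f\|_{B}$ and $\int_{0}^{\infty}\Phi_{\beta}(\tau)\,d\tau=1$, $\int_{0}^{\infty}\beta\tau\,\Phi_{\beta}(\tau)\,d\tau=1/\Gamma(\beta)$. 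Part~(ii) uses the analytic smoothing $\|e^{-sA}f\|_{H^{\sigma\alpha,p}}\le Cs^{-\sigma}\|f\|_{p}$ for the fractional heat semigroup together with the moment identity $\int_{0}^{\infty}\tau^{-\sigma}\Phi_{\beta}(\tau)\,d\tau<\infty$ for $\sigma\in(0,1]$. Your proposal contains none of this and therefore has a fundamental gap: it proves the wrong theorem.
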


\section{ Relevant complements to proof of Theorem \ref{theorem4} }\label{ijugggrc}
\subsection{Assumptions and properties about function $\rho(s)$}\label{367897}
Through reference \cite{jiang2022boundedness}, the function $ \rho(\cdot )$ is a given function satisfying the following conditions:
\begin{align}\label{1.5}
  \nonumber  &\text{(H0)}:\qquad  \rho (\cdot )\in C^{3}[0,+\infty ), \rho (\cdot )>0,{ \rho }'(\cdot )\leq 0\quad\text{on}\,(0,+\infty ),\lim_{s\rightarrow +\infty } \rho (s)=0.\\
  &\text{(H1)}:\qquad  \rho (s)+s{ \rho }'(s)\geq 0,\qquad \forall s>0,\\
  &\text{(H2)}:\qquad \text{there is}\,k>0\,\text{such that}\,\lim_{s\rightarrow +\infty }s^{k} \rho (s)=+\infty.\\
  &\text{(H3)}:\qquad l_{0}\left | { \rho }'(s) \right |^{2}\leq  \rho(s){ \rho }''(s),\,\text{with some}\,l_{0}> \frac{n+2}{4}\,\text{for all}\,s>0.
\end{align}
\begin{lemma}\textsuperscript{\cite{ahn2019global}}\label{lemma2.2}
    Suppose that $(u,v)$ is the classical solution of \eqref{1.1} up to the maximal time of existence $T_{max}\in (0,\infty ]$. Then, there exists a strictly positive constant $v_{*}=v_{*}(n,\Omega ,\left \| u_{0} \right \|_{L^{1}(\Omega )})$ such that, for all $t\in(0,T_{max})$, it holds that
    $$\underset{x\in \Omega }{\text{inf}}v(x,t)\geq v_{*}.$$
\end{lemma}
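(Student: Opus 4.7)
The plan is to combine the uniform-in-time $L^\infty$ bound produced by the Moser-type machinery of Section~\ref{iokjuyhnnj} with the fractional Duhamel representation of $w=u-\overline{u_0}$ and the decay estimates for Mittag-Leffler semigroups in Lemma~\ref{lemma2.4}. First, I would record that Lemma~\ref{lemma4.3} produces a $p>n/2-1$ with $\sup_{t}\|u(\cdot,t)\|_{L^{1+p}(\Omega)}<\infty$; feeding this into the second equation $(-\Delta)^{\alpha/2}v+v=u$ together with the fractional elliptic/Sobolev regularity (and the $L^\infty$ lower bound $v\ge v_\ast$ from Lemma~\ref{lemma2.2}) yields $\|v\|_{W^{1,\infty}}<\infty$ uniformly in time. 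A standard Alikakos-Moser iteration applied to $u^{p+1}\rho^{q}(v)$, driven by Lemma~\ref{897hhg} and using (H3) to absorb the cross term via Young's inequality, then upgrades the $L^{1+p}$ control to $\sup_{t}\|u\|_{L^\infty(\Omega)}<\infty$. Global existence follows from the continuation criterion mentioned in Appendix~\ref{90oiuy}.

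Next, I would derive the exponential decay of $v-\overline{u_0}$ in $H^{\alpha/2}$. Using $\overline{v}(t)=\overline{u_0}$ and $\,_{0}^{C}\mathrm{D}_{t}^{\beta}\int_{\Omega}v\,dx=0$, the Lyapunov identity \eqref{3.1} of Lemma~\ref{lemma3.1} rewrites as a fractional dissipation inequality for $\|D^{\alpha/2}v\|^2+\|v-\overline{u_0}\|^2$. Combining the uniform lower bound $\rho(v)\ge\rho(v^\ast)$ with the Hardy--Littlewood--Sobolev inequality (Lemma~\ref{lemmaH}) gives $\int_{\Omega}\rho(v)|D^{\alpha}v|^{2}\ge c(\|D^{\alpha/2}v\|^{2}+\|v-\overline{u_0}\|^{2})$. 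Applying the fractional ODE comparison principle (Lemma~\ref{Gao} together with the Mittag-Leffler bound Lemma~\ref{lemma390}) yields
\begin{equation*}
\|D^{\alpha/2}v(t)\|^{2}+\|v(t)-\overline{u_0}\|^{2}\le C\,e^{(-\theta_{0})^{1/\beta}t},\qquad t\ge 0,
\end{equation*}
for some $\theta_{0}>0$. Testing equation \eqref{4.17} against $|u-\overline{u_0}|^{p-2}(u-\overline{u_0})$, bounding the coupling term by Young's inequality and absorbing $\|D^{\alpha/2}u\|^{2}$ via Hardy--Littlewood--Sobolev as in Lemma~\ref{lemma4.5}, I obtain a closed fractional differential inequality for $\int_{\Omega}|u-\overline{u_0}|^{p}\,dx$ whose right-hand side is already shown to decay exponentially; the fractional Gr\"onwall Lemma~\ref{lemma2.14} then propagates exponential decay to any $L^{p}$ norm of $u-\overline{u_0}$.

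To reach the $L^\infty$ norm I would apply fractional Duhamel's principle to \eqref{4.17} with $A=(-\Delta)^{\alpha/2}$. Starting from $\tau_{0}\ge 1$ (so that the parabolic Schauder bounds of Lemma~\ref{89jhgtt} are already active), one has
\begin{equation*}
w(t)=E_{\beta}(-t^{\beta}A)w(\tau_{0})-\int_{\tau_{0}}^{t}(t-s)^{\beta-1}E_{\beta,\beta}(-(t-s)^{\beta}A)A\bigl((\rho(v(s))-\rho_{0})u(s)\bigr)\,ds.
\end{equation*}
The first term is controlled by the contraction estimate of Lemma~\ref{lemma2.4}(i). For the convolution integral, Lemma~\ref{lemma2.4}(ii) with $\sigma=1$ converts the factor $A$ into a short-time singular weight $(t-s)^{-\beta}$, and Lemma~\ref{89jhgtt} bounds $A((\rho(v)-\rho_{0})u)$ in $L^{\infty}$ by $C(\|Av\|_{L^\infty}+\|v-\overline{u_0}\|_{L^\infty})$ after expanding $\rho(v)-\rho(\overline{u_0})$ by the mean value theorem. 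Since both factors decay like $e^{(-\theta_{1})^{1/\beta}s}$ thanks to the $W^{1,\infty}$ decay of $v-\overline{u_0}$ established in Lemma~\ref{lemma4.5}, the convolution decays like $e^{(-\theta)^{1/\beta}t}$ for any $\theta<\theta_{1}$; this gives \eqref{4.25} and hence the $L^{\infty}$ part of \eqref{o90875}. The $W^{1,\infty}$ decay of $v-\overline{u_0}$ then follows by testing the elliptic identity $(I+A)(v-\overline{u_0})=u-\overline{u_0}$ in $L^{p_{0}}$ with $p_{0}>n$ and invoking the Sobolev embedding $W^{2,p_{0}}\hookrightarrow W^{1,\infty}$.

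The main technical obstacle I expect is the Duhamel step: the operator $A$ appearing \emph{inside} the Mittag-Leffler integrand creates a $(t-s)^{-\beta}$ singularity that is only integrable because $\beta<1$, and one must simultaneously exploit (a) the exponential decay of the data produced in Lemma~\ref{lemma4.5} and (b) the Schauder-type $C^{2+\theta}$ bounds of Lemma~\ref{89jhgtt} to make sense of $A((\rho(v)-\rho_{0})u)$ in $L^{\infty}$. Splitting the convolution into $[\tau_{0},t/2]$ and $[t/2,t]$, and carefully tracking how the rate $\theta_{1}$ for $\|v-\overline{u_0}\|_{W^{1,\infty}}$ degrades to the final rate $\theta$ through the fractional convolution, is the delicate part; this is also the mechanism by which the exponent $(-\theta)^{1/\beta}$ (rather than $-\theta t$) appears, matching the Mittag-Leffler asymptotics of Lemma~\ref{lemma390}.
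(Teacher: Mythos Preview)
Your proposal does not address the stated lemma at all. Lemma~\ref{lemma2.2} asserts the existence of a strictly positive, time-independent lower bound $v_{*}$ for $v$, depending only on $n$, $\Omega$, and $\|u_{0}\|_{L^{1}(\Omega)}$. What you have written is instead an outline of the proof of Theorem~\ref{theorem4} (global boundedness and exponential stabilization of $(u,v)$), and in fact you explicitly \emph{invoke} Lemma~\ref{lemma2.2} as an input (``the $L^{\infty}$ lower bound $v\ge v_{\ast}$ from Lemma~\ref{lemma2.2}'') rather than establishing it. None of the steps you list---the Moser iteration, the Lyapunov decay, the Duhamel representation, the Mittag--Leffler estimates---bear on the question of why $\inf_{x}v(x,t)$ stays uniformly positive.

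For the record, in the paper Lemma~\ref{lemma2.2} is imported directly from \cite{ahn2019global} (and \cite{fujie2020global}) and is not reproved: the argument there rests on mass conservation $\int_{\Omega}u(\cdot,t)\,dx=\|u_{0}\|_{L^{1}(\Omega)}$ together with a pointwise lower bound for the Green's function of $I+(-\Delta)^{\alpha/2}$ on $\Omega$, so that $v=(I+(-\Delta)^{\alpha/2})^{-1}u$ inherits a strictly positive infimum depending only on the preserved $L^{1}$ mass and the domain. A correct proof proposal for this lemma should reproduce that mechanism, not the large-time asymptotics of the full system.
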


\begin{remark}
     By reference \cite{jiang2022boundedness}, in view of the time-independent lower bound $0<v_{*}\leq v(x,t),$ one can slightly weaken assumption (H1) as follows:
    \begin{equation}\label{3.4}
        \rho  (s)+s{\rho  }'(s)\geq 0,\quad \forall s\geq v_{*}.
    \end{equation}
    On the other hand,  a direct calculation indicates that the above assumption yields that
    \begin{equation}\label{3.5}
        s\rho  (s)\geq v_{*}\rho  (v_{*}),\quad \forall s\geq v_{*},
    \end{equation}
    and hence $\rho $ fulfills (H2) with any $k>1$.
\end{remark}

\begin{remark}
   By references \cite{jiang2022boundedness},  under above assumption H0-H3, it may infer that there exist $b>0$ and $s_{b}>v_{*}$ such that, for all $s\geq s_{b},$
    \begin{equation}\label{app3}
    \footnote{It is mainly obtained from the property of $\rho$ in reference \cite{jiang2022boundedness}.}1/\rho (s)\leq bs^{k},
\end{equation}
and on the one hand, since $\rho (\cdot )$ is non-increasing,
\begin{equation}\label{app4}
    \footnote{It is mainly obtained from the non-increasing of $\rho$ in reference \cite{jiang2022boundedness}.}1/\rho (s)\leq 1/\rho (s_{b})
\end{equation}
for all $0\leq s<s_{b}$. Therefore,for all $s\geq 0$, it holds that
\begin{equation}\label{3.7}
       \footnote{The following formula can be obtained by combining \eqref{app3} and \eqref{app4}.} 1/\rho (s)\leq bs^{k}+1/\rho (s_{b}).
    \end{equation}
On the other hand, thanks to \eqref{3.7}, one has 
\begin{align}\label{app5}
 \nonumber    \int _{\Omega }v^{p-1}\rho  (v)udx&\geq\int _{\Omega }v^{p-1}(bv^{k}+1/\rho (s_{b}))^{-1}udx\\
    &\geq C_{1}\int _{\Omega }(v_{k}+1)^{-1}v^{p-1}udx.
\end{align}
with $1/C_{1}=\frac{max \left \{  b\rho (s_{b}),1\right \} }{\rho (s_{b})}>0$ independent of $p$ and time, in the view of the fact that
\begin{equation}\label{app6}
    \footnote{The main reference \cite{jiang2022boundedness} is available. } bv^{k}+1/\rho (s_{b})=\frac{1}{\rho (s_{b})} (bv^{k}\rho (s_{b})+1)\leq \frac{max \left \{  b\rho (s_{b}),1\right \} }{\rho (s_{b})}(v^{k}+1).
\end{equation}
Since $v^{k}\geq v_{*}^{k}$ by Lemma \ref{lemma2.2},it holds that
\begin{equation}\label{78uuyh}
    \footnote{The main reference \cite{jiang2022boundedness} is available. } (v^{k}+1)^{-1}v^{p-1} \geq (v^{k}+v_{*}^{-k}v^{k})^{-1} v^{p-1}=\frac{v^{p-k-1} }{1+v_{*}^{-k} }
\end{equation}
from which we deduce that
\begin{equation}\label{8hfhjn}
     \footnote{The main reference \cite{jiang2022boundedness} is available. } \int _{\Omega }v^{p-1}\rho  (v)udx\geq C_{2}\int _{\Omega }v^{p-k-1}udx,
\end{equation}
where $C_{2}>0$ may depend on the initial datum, $n,\Omega$ and $\rho$, but is independent of $p$ and time.
\end{remark}

\subsection{ Optimization  proposition and lemma}\label{0poiuytgb}
We state and prove here some simple technical proposition and lemma that has been used in the proof of Theorem \ref{theorem4}.
\begin{lemma}\label{lemma3.4}
    Assume that $n\geq3$. Suppose that $\rho$ satisfies (H0) and (H2) with some $0<k<\frac{4}{n-2}.$ Let $L>1$ be a generic constant. There exists $C_{0}>0$ depending only on the initial datum, $\Omega,K,\lambda_{1},\lambda_{2}$ and $n$ such that, for any $p>q\geq q_{*}=\frac{2n}{n-2}$ satisfying
    $$q<p=2q-\frac{nk}{2},$$
    it holds that
    \begin{equation}\label{89hygfred}
        _{0}^{C}\textrm{D}_{t}^{\beta }\int _{\Omega }v^{p}dx+\lambda _{2}p\int _{\Omega }v^{p}dx\leq C_{0}L^{\frac{n}{2} }p^{\frac{n+2}{2} }\left ( \int _{\Omega }v^{p}dx \right ) ^{2}.
    \end{equation}
\end{lemma}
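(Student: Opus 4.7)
The starting point is the conclusion of Lemma \ref{lemma3.3}. After discarding the nonnegative term $\lambda_{1}p\int_{\Omega}v^{p-k}dx$ from the left-hand side, it yields
\begin{equation*}
_{0}^{C}\textrm{D}_{t}^{\beta}\int_{\Omega}v^{p}dx+\lambda_{2}p\int_{\Omega}v^{p}dx+\frac{\lambda_{1}p(p-k-1)}{(p-k)^{2}}\int_{\Omega}\bigl|D^{\alpha/2}v^{(p-k)/2}\bigr|^{2}dx\leq 2\lambda_{2}p\int_{\Omega}v^{p}dx.
\end{equation*}
Thus, to obtain the target inequality, it suffices to bound the remaining $\lambda_{2}p\int_{\Omega}v^{p}\,dx$ on the right by the dissipation term (which will be absorbed into the LHS) plus the claimed cubic/superlinear quantity $C_{0}L^{n/2}p^{(n+2)/2}(\int_{\Omega}v^{p}dx)^{2}$.

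The device for this is a tailored fractional Gagliardo--Nirenberg--Nash interpolation (Lemma \ref{lemmagn}). I would set $w=v^{(p-k)/2}$, so that $\int_{\Omega}v^{p}dx=\|w\|_{L^{r}}^{r}$ with $r=2p/(p-k)$, and the dissipation becomes $\|D^{\alpha/2}w\|_{L^{2}}^{2}$. Interpolating $\|w\|_{L^{r}}$ between $\|D^{\alpha/2}w\|_{L^{2}}$ and $\|w\|_{L^{s}}$ with $s=2q/(p-k)$ (so that $\|w\|_{L^{s}}^{s}=\int_{\Omega}v^{q}dx$), the algebraic identity $p=2q-nk/2$ combined with $q\geq q_{*}=2n/(n-2)$ is exactly what forces the interpolation exponent $\theta$ to satisfy $\theta r=2$. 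Consequently, raising the inequality to the $r$-th power produces a factor of $\|D^{\alpha/2}w\|_{L^{2}}^{2}$ exactly matching the dissipation, and a remainder of the form $C(p)(\int_{\Omega}v^{q}dx)^{\mu}$ with $\mu=r(1-\theta)$ computable explicitly in terms of $n$, $\alpha$, $k$.

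With this interpolation in hand, I would apply Young's inequality with a small parameter $\varepsilon=\varepsilon(p)$ of order $(p-k)^{2}/[\lambda_{1}p(p-k-1)]$, so that the $\|D^{\alpha/2}w\|_{L^{2}}^{2}$-part is absorbed by the dissipation on the LHS. Tracking the $p$-dependence through the Gagliardo--Nirenberg constant (which behaves like a fixed power of $r$) and through $\varepsilon^{-r/2}$ yields the factor $p^{(n+2)/2}$. The factor $L^{n/2}$ enters through the generic control of $\int_{\Omega}v^{q}dx$ from the previous Moser step: once one rewrites $(\int_{\Omega}v^{q}dx)^{\mu}$ using Hölder's inequality $\int_{\Omega}v^{q}dx\leq |\Omega|^{1-q/p}(\int_{\Omega}v^{p}dx)^{q/p}$ together with the preceding $L^{q}$ estimate, the exponent collapses to $(\int_{\Omega}v^{p}dx)^{2}$ and the auxiliary constants from the previous iterate are collected into $L^{n/2}$.

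The main technical obstacle is the accurate bookkeeping of $p$-dependent constants: one must verify that, for every $p,q$ in the admissible range prescribed by $q\geq q_{*}$ and $p=2q-nk/2$, the Gagliardo--Nirenberg interpolation exponent $\theta$ lies strictly in $(0,1)$ (this requires $0<k<4/(n-2)$, which is precisely the hypothesis of the lemma), the absorption is uniform, and the $p$-dependence of the resulting constant grows no faster than $p^{(n+2)/2}$. A secondary delicate point is that $v$ is only a priori bounded above on finite time intervals, so passing from $(\int v^{q})^{\mu}$ to $(\int v^{p})^{2}$ via Hölder must be done using the uniform-in-time lower bound $v\geq v_{*}$ from Lemma \ref{lemma2.2}, to ensure that the constants absorbed into $L^{n/2}$ are time-independent.
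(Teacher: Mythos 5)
Your plan follows essentially the same route as the paper: start from Lemma \ref{lemma3.3}, control the surplus $\lambda_{2}p\int_{\Omega}v^{p}dx$ by interpolating $\int_{\Omega}v^{p}dx$ between the dissipation $\|D^{\alpha/2}v^{(p-k)/2}\|_{L^{2}}^{2}$ and a lower-order integral $\int_{\Omega}v^{q}dx$, then use Young's inequality to absorb a $1/L$ fraction into the left-hand side of \eqref{ijut567} and track the $p$-dependence of the remaining constant. The paper compresses exactly this into one displayed inequality and a citation to \cite{jiang2022boundedness}; your identification of the role of $p=2q-\tfrac{nk}{2}$ (it is precisely the relation that makes the H\"older--Sobolev--Young exponents close with squares on both sides) and of the origins of $L^{n/2}$ and $p^{(n+2)/2}$ matches that argument, and your insistence on using the fractional Sobolev/Gagliardo--Nirenberg embedding for $D^{\alpha/2}$ rather than the $H^{1}$ norm is, if anything, more careful than the paper's display.

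The one step that does not survive scrutiny is your final conversion of $\left(\int_{\Omega}v^{q}dx\right)^{\mu}$ into $\left(\int_{\Omega}v^{p}dx\right)^{2}$. The interpolation closes with $\mu=2$ in the $q$-integral, and H\"older goes the wrong way for your purpose: $\int_{\Omega}v^{q}dx\leq|\Omega|^{1-q/p}\left(\int_{\Omega}v^{p}dx\right)^{q/p}$ yields the power $2q/p<2$, not $2$; forcing the exponent up to $2$ via the lower bound $v\geq v_{*}$ introduces a factor $v_{*}^{2q-2p}$ that grows with $q$ and would destroy the $p^{(n+2)/2}$ bookkeeping. This step is, however, unnecessary: the inequality actually used in the Moser iteration (Proposition \ref{ijyuj90977}, where the right-hand side is $\lambda_{2}p_{r}\mathcal{B}_{r}(\mathcal{Q}_{r-1})^{2}$ with $\mathcal{Q}_{r-1}=\sup_{t}\int_{\Omega}v^{p_{r-1}}dx=\sup_{t}\int_{\Omega}v^{q}dx$), and the corresponding lemma in \cite{jiang2022boundedness}, both have $\left(\int_{\Omega}v^{q}dx\right)^{2}$ on the right; the exponent $p$ in \eqref{89hygfred} appears to be a typo. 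Your core estimate therefore lands exactly where the paper needs it, and you should simply stop there rather than append the H\"older conversion.
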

\begin{proof}
    The important proof of this Lemma mainly refers to [\cite{jiang2022boundedness},Lemma3.4]. In the reference \cite{jiang2022boundedness}, author uses Hölder's inequality , Sobolev embedding inequality and Young's inequality to shrink $ 2\lambda _{2}p\int _{\Omega }v^{p}dx $. Then we have that
    $$ 2\lambda _{2}p\int _{\Omega }v^{p}dx \leq \frac{\lambda _{1}p(p-k-1) }{L(p-k)^{2} } \left \| v^{\frac{p-k}{2} }  \right \|_{H^{1}(\Omega )}^{2}+C_{0}L^{\frac{n}{2} }p^{\frac{n+2}{2} }\left ( \int _{\Omega }v^{p}dx \right ) ^{2} .$$
    Combining Lemma \ref{lemma3.3} and recalling that $L>1$, we finally arrive at the following inequality:
    $$ _{0}^{C}\textrm{D}_{t}^{\beta }\int _{\Omega }v^{p}dx+\lambda _{2}p\int _{\Omega }v^{p}dx\leq C_{0}L^{\frac{n}{2} }p^{\frac{n+2}{2} }\left ( \int _{\Omega }v^{p}dx \right ) ^{2}.$$
    This completes the proof.
\end{proof}
\begin{remark}
    The Lemma mainly delates $2\lambda _{2}p\int _{\Omega }v^{p}dx$ through relevant inequalitys and [\cite{jiang2022boundedness},Lemma 3.4]. Further, combined with Lemma \ref{lemma3.3}, we will  obtain the  inequality \eqref{89hygfred}.
\end{remark}

\begin{proposition}\label{ijyuj90977}
     Assume that $n\geq3$ and $\rho$ satisfies (H0),(H1) and (H2) in Appendix \ref{367897} with some $k\in(0,\frac{4}{n-2})$. First, for $(p,q)=(p_{r},p_{r-1})$, we can get
    $$_{0}^{C}\textrm{D}_{t}^{\beta }\int _{\Omega }v^{p_{r}}dx+\lambda _{2}p_{r}\int _{\Omega }v^{p_{r}}dx\leq \lambda _{2}p_{r}\mathcal{B}_{r} (\mathcal{Q}_{r-1})^{2}.$$
 Both $\mathcal{B}_{r} $ and $\mathcal{Q}_{r}$ here are defined below.

Then there is $v^{*}>0$ depending only on the initial datum, $\rho,n$ and $\Omega$ such that
    \begin{equation}
        \underset{0\leq t< T_{max}}{sup} \left \| v(\cdot ,t) \right \|_{L^{\infty  } (\Omega )} \leq v^{*} .
    \end{equation}
\end{proposition}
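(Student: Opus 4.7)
The plan is to run an Alikakos–Moser scheme on a sequence of exponents tailored to Lemma \ref{lemma3.4}. Set $p_{0}:=q_{*}=\tfrac{2n}{n-2}$ and define recursively $p_{r}:=2p_{r-1}-\tfrac{nk}{2}$ for $r\ge1$. Because $k\in(0,\tfrac{4}{n-2})$ gives $p_{0}>\tfrac{nk}{2}$, the sequence is strictly increasing with closed form $p_{r}=2^{r}(p_{0}-\tfrac{nk}{2})+\tfrac{nk}{2}$, so $p_{r}\to\infty$ at rate $2^{r}$, and by construction each pair $(p,q)=(p_{r},p_{r-1})$ satisfies the algebraic requirement $p=2q-\tfrac{nk}{2}$ of Lemma \ref{lemma3.4}. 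Define $\mathcal{Q}_{r}:=\sup_{0\le t<T_{max}}\int_{\Omega}v^{p_{r}}dx$ and $\mathcal{B}_{r}:=\tfrac{C_{0}L^{n/2}p_{r}^{n/2}}{\lambda_{2}}$ with $C_{0}$, $L$ from Lemma \ref{lemma3.4}. Substituting $(p_{r},p_{r-1})$ into \eqref{89hygfred} produces exactly the first claimed inequality, since $\bigl(\int_{\Omega}v^{p_{r-1}}\bigr)^{2}\le \mathcal{Q}_{r-1}^{2}$.

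\textbf{From the fractional inequality to a recursion on $\mathcal{Q}_{r}$.} The right-hand side $\lambda_{2}p_{r}\mathcal{B}_{r}\mathcal{Q}_{r-1}^{2}$ is time-independent once $\mathcal{Q}_{r-1}$ is known to be finite. Invoking Lemma \ref{lemma2.14} with $f_{r}(t):=\int_{\Omega}v^{p_{r}}(\cdot,t)dx$, $c=\lambda_{2}p_{r}$ and $g(s)\equiv\lambda_{2}p_{r}\mathcal{B}_{r}\mathcal{Q}_{r-1}^{2}$, and combining with the explicit Mittag-Leffler solution of the corresponding equality $_{0}^{C}\textrm{D}_{t}^{\beta}f+cf=g$ (bounded for all $t\ge0$ by $\max\{f(0),g/c\}$ since $E_{\beta}(-ct^{\beta})\in[0,1]$), I obtain the time-independent bound
\begin{equation*}
\mathcal{Q}_{r}\;\le\;\max\!\left\{\int_{\Omega}v_{0}^{p_{r}}dx,\;\mathcal{B}_{r}\,\mathcal{Q}_{r-1}^{2}\right\}.
\end{equation*}
The initial term is harmless: by Lemma \ref{0oplkjh} the function $v_{0}=(I+(-\Delta)^{\alpha/2})^{-1}[u_{0}]$ is bounded, hence $\int_{\Omega}v_{0}^{p_{r}}dx\le|\Omega|\|v_{0}\|_{\infty}^{p_{r}}$ and this contribution is absorbed once $r$ is large.

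\textbf{Closing the Moser iteration.} Taking logarithms in the recursion and dividing by $p_{r}$ yields
\begin{equation*}
\frac{\log\mathcal{Q}_{r}}{p_{r}}\;\le\;\frac{\log\mathcal{B}_{r}}{p_{r}}+\frac{2p_{r-1}}{p_{r}}\cdot\frac{\log\mathcal{Q}_{r-1}}{p_{r-1}}.
\end{equation*}
Since $p_{r}\sim 2^{r}$, the forcing $\log\mathcal{B}_{r}/p_{r}=O(r/2^{r})$ is summable, and $2p_{r-1}/p_{r}=1+\tfrac{nk/2}{p_{r}}=1+O(2^{-r})$ so $\prod_{r}(2p_{r-1}/p_{r})$ converges. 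A standard telescoping then bounds $\limsup_{r\to\infty}\mathcal{Q}_{r}^{1/p_{r}}\le v^{*}$ for a constant $v^{*}$ depending only on $u_{0},\rho,n,\Omega$. Because $\|v(\cdot,t)\|_{L^{p_{r}}(\Omega)}\le\mathcal{Q}_{r}^{1/p_{r}}$ uniformly in $t$, and $\|\cdot\|_{L^{p_{r}}}\to\|\cdot\|_{L^{\infty}}$ on the bounded domain $\Omega$ as $r\to\infty$, the desired uniform-in-time $L^{\infty}$ estimate follows.

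\textbf{Principal difficulty.} The delicate point, compared with the classical ($\beta=1$) Moser scheme, is the passage from the fractional inequality $_{0}^{C}\textrm{D}_{t}^{\beta}f+cf\le g$ to the supremum bound $\sup_{t}f\le\max\{f(0),g/c\}$: the Caputo derivative does not admit a chain rule, so one cannot simply multiply by $e^{ct}$. I plan to circumvent this by combining the integral representation of Lemma \ref{lemma2.14} with the Mittag-Leffler comparison principle of Lemmas \ref{Gao}–\ref{lemma390}, exploiting $0\le E_{\beta}(-ct^{\beta})\le1$ for $c,t>0$. All other steps — the Gagliardo–Nirenberg–type reductions inside Lemma \ref{lemma3.4}, and the bookkeeping ensuring $\sum\log\mathcal{B}_{r}/p_{r}<\infty$ — are routine given the exponential growth $p_{r}\sim 2^{r}$.
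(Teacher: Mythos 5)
Your iteration is structurally the same as the paper's: the same exponent sequence $p_{r}=2^{r}(q_{*}-\tfrac{nk}{2})+\tfrac{nk}{2}$, the same substitution of $(p_{r},p_{r-1})$ into Lemma \ref{lemma3.4}, the same recursion $\mathcal{Q}_{r}\le\max\{\cdot,\mathcal{B}_{r}\mathcal{Q}_{r-1}^{2}\}$, and a closing step (your logarithmic telescoping) that is equivalent to the paper's explicit product $\mathcal{B}_{r}\mathcal{B}_{r-1}^{2}\cdots\mathcal{B}_{1}^{2^{r-1}}$ followed by taking $p_{r}$-th roots. The one genuine divergence is the treatment of the fractional differential inequality $\,_{0}^{C}\textrm{D}_{t}^{\beta}y+\lambda_{2}p_{r}y\le g$ with constant $g$. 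The paper applies Lemma \ref{lemma2.14} directly and obtains $y(t)\le y(0)+\frac{g}{\Gamma(\beta)}\int_{0}^{t}(t-s)^{\beta-1}ds$, which carries a factor $T^{\beta}/(\beta\Gamma(\beta))$ and is therefore not actually uniform in time when $T_{max}=\infty$; you instead claim the sharper, genuinely time-independent bound $\sup_{t}y\le\max\{y(0),g/c\}$. Your version is what the proposition actually needs, so on this point your route is preferable to the paper's.

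However, as written that step contains a gap. The bound $\sup_{t}y\le\max\{y(0),g/c\}$ does not follow from the lemmas you invoke: Lemma \ref{Gao} and Lemma \ref{lemma390} treat the equation $\,_{0}^{C}\textrm{D}_{t}^{\beta}u=wu$ with $w>0$ (growing Mittag-Leffler solutions), and Lemma \ref{lemma2.14} only yields the $t^{\beta}$-growing estimate above. What you need is the comparison principle for the damped problem: if $\,_{0}^{C}\textrm{D}_{t}^{\beta}y+cy\le g$ with $c>0$ and $g$ constant, then
\begin{equation*}
y(t)\;\le\;y(0)E_{\beta}(-ct^{\beta})+g\int_{0}^{t}(t-s)^{\beta-1}E_{\beta,\beta}(-c(t-s)^{\beta})\,ds\;=\;y(0)E_{\beta}(-ct^{\beta})+\frac{g}{c}\bigl(1-E_{\beta}(-ct^{\beta})\bigr),
\end{equation*}
which gives the claimed maximum bound because $0\le E_{\beta}(-ct^{\beta})\le 1$. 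This requires (i) a comparison theorem for Caputo inequalities with negative coefficient, (ii) the complete monotonicity of $E_{\beta}(-ct^{\beta})$ and positivity of $E_{\beta,\beta}(-cs^{\beta})$, and (iii) the integral identity $c\int_{0}^{t}s^{\beta-1}E_{\beta,\beta}(-cs^{\beta})ds=1-E_{\beta}(-ct^{\beta})$ — none of which appear in the paper's appendix. You should either state and cite such a comparison lemma explicitly or prove it; without it the passage from the fractional inequality to a $T$-independent $\mathcal{Q}_{r}$ is unsupported, and that passage is precisely the load-bearing step of the whole proposition.
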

\begin{proof}
\footnote{The definition of $p_{r},\mathcal{B}_{r},\mathcal{Q}_{r}$ is mainly referred to [\cite{jiang2022boundedness},Proposition 3.5], but the difference is that the inequality we get is fractional differential inequality rather than differential inequality.} For all $r\in \mathbb{N}$ we define  
    $$p_{r}\triangleq 2^{r}(q_{*}-\frac{nk}{2})+\frac{nk}{2},\qquad p_{0}=q_{*}.$$
Then $p_{r}>q_{*}>\frac{nk}{2}$ and $p_{r}=2p_{r-1}-\frac{nk}{2}.$ We apply Lemma \ref{lemma3.4} with $(p,q)=(p_{r},p_{r-1})$ to get
    $$_{0}^{C}\textrm{D}_{t}^{\beta }\int _{\Omega }v^{p_{r}}dx+\lambda _{2}p_{r}\int _{\Omega }v^{p_{r}}dx\leq \lambda _{2}p_{r}\mathcal{B}_{r} (\mathcal{Q}_{r-1})^{2},$$
where
    $$\mathcal{Q}_{r} \triangleq \underset{0\leq t< T_{max}}{sup}\int _{\Omega }v^{p_{r}}dx\quad \text{and}\quad \mathcal{B}_{r}\triangleq \frac{C_{0}L^{\frac{n}{2} }p^{\frac{n}{2} }}{\lambda _{2}}. $$
 Note that $\mathcal{Q}_{r}$ is finite for all $r$ in view of \eqref{2.2}. Now, letting $y(t)=\int _{\Omega }v^{p_{r}}dx$, we get
    $$_{0}^{C}\textrm{D}_{t}^{\beta }y(t)+\lambda _{2}p_{r}y(t)\leq  \lambda _{2}p_{r}\mathcal{B}_{r} (\mathcal{Q}_{r-1})^{2}.$$
   By Lemma \ref{lemma2.14}, we infer from the above that
    \begin{align}
        y(t)&\leq y(0)+\frac{1}{\Gamma (\beta) } \int_{0}^{t}(t-s)^{\beta-1}(\lambda _{2}p_{r}\mathcal{B}_{r} (\mathcal{Q}_{r-1})^{2})ds\\
        &\leq \left \| v_{0} \right \| _{L^{\infty }(\Omega ) }^{p_{r}} +\lambda _{2}p_{r}\mathcal{B}_{r} (\mathcal{Q}_{r-1})^{2}\int_{0}^{t}(t-s)^{\beta-1}ds\\
        &\leq \left \| v_{0} \right \| _{L^{\infty }(\Omega ) }^{p_{r}}+\frac{\lambda _{2}p_{r}\mathcal{B}_{r} (\mathcal{Q}_{r-1})^{2}T^{\beta } }{\beta \Gamma (\beta)}.
    \end{align}
As a result, we obtain that, for all $r\in \mathbb{N}$,
$$\mathcal{Q}_{r}=\underset{0\leq t< T_{max}}{sup}\int _{\Omega }v^{p_{r}}dx\leq max \left \{ \left \| v_{0} \right \| _{L^{\infty }(\Omega ) }^{p_{r}},\frac{\lambda _{2}p_{r}\mathcal{B}_{r} (\mathcal{Q}_{r-1})^{2}T^{\beta } }{\beta \Gamma (\beta)} \right \} .$$
Since $p_{r}\geq q_{*}$ for all $r\leq 1$, one can choose $L>1$ sufficiently large depending only on the initial datum, $\Omega,n$ and $k$ such that $\mathcal{B}_{r}>1$ for all $r\geq 1$. Moreover, adjusting $C_{0}$ by a proper larger nimber, we have that
$$\mathcal{B}_{r}\leq C_{0}a^{r},$$
with some $a>0$. Furthermore, since  Lemma \ref{lemma3.1}, Sobolev embedding $H^{1}\hookrightarrow L^{q_{*}}$ and $\rho$ satisfies (H0) and (H1),  we may find some large constant $K_{0}>1$ has always dominated $\left \| v_{0} \right \| _{L^{\infty}(\Omega)}$ and $\int _{\Omega }v^{q_{*}}dx$ for all time.

Iteratively, \footnote{By reference [\cite{alikakos1979application}, We do Alikakos-Moser iterate on $\int _{\Omega }v^{p_{r}}dx$ and combine fractional differential inequality to proof \eqref{ikjgtrf}.}we deduce that               
\begin{align*}
    \int _{\Omega }v^{p_{r}}dx&\leq max\left \{ \mathcal{B}_{r}\mathcal{B}_{r-1}^{2}\mathcal{Q}_{r-2}^{4}, \mathcal{B}_{r}K_{0}^{2p_{r-1}},K_{0}^{p_{r}}  \right \} \\
    &= max\left \{ \mathcal{B}_{r}\mathcal{B}_{r-1}^{2}\mathcal{Q}_{r-2}^{4}, \mathcal{B}_{r}K_{0}^{2p_{r-1}} \right \}\\
    &\leq \cdots \\
    &\leq max\left \{ \mathcal{B}_{r}\mathcal{B}_{r-1}^{2}\mathcal{B}_{r-2}^{4}\cdots\mathcal{B}_{1}^{2^{r-1}}\mathcal{Q}_{0}^{2^{r}}, \mathcal{B}_{r}\mathcal{B}_{r-1}^{2}\cdots\mathcal{B}_{2}^{2^{r-2}} K_{0}^{2^{r-1}p_{1}} \right \}\\
    &\leq max\left \{ \mathcal{B}_{r}\mathcal{B}_{r-1}^{2}\mathcal{B}_{r-2}^{4}\cdots \mathcal{B}_{1}^{2^{r-1}}K_{0}^{2^{r}}, \mathcal{B}_{r}\mathcal{B}_{r-1}^{2}\cdots\mathcal{B}_{2}^{2^{r-2}} K_{0}^{2^{r-1}p_{1}} \right \}\\
    &\leq C_{0}^{2^{0} +2^{1} +\cdots+2^{r-1} }\times a^{1\cdot r+2(r-1)+2^{2}(r-2)+\cdots +2^{r-1} (r-(r-1)) }\times \tilde{K} _{0}^{2^{r} } \\
    &=C_{0}^{2^{r}-1}a^{2^{1+r}-r-2}\tilde{K} _{0}^{2^{r} },
\end{align*}
where $\tilde{K}=max\left \{ K_{0},K_{0}^{p-1} \right \} $. Finally, recalling that $p_{r}=2^{r}(q_{*}-\frac{nk}{2})+\frac{nk}{2}$, we deduce that
\begin{equation}\label{ikjgtrf}
    \left \| v \right \| _{L^{\infty }(\Omega ) } \leq \lim_{r \nearrow  \infty }\left ( C_{0}^{2^{r}-1}a^{2^{1+r}-r-2}\tilde{K} _{0}^{2^{r} }  \right )^{1/p_{r}}=\left ( C_{0}a^{2}\tilde{K} _{0} \right )^{\frac{2}{2q_{*}-nk} },
\end{equation}
The completes the proof.
\end{proof}
\begin{remark}
    The method of proof the Lemma is mainly referenced [\cite{jiang2022boundedness},Proposition 3.5], but in the process of proven, we used fractional differential inequality to narrow $y(t)=\int _{\Omega }v^{p_{r}}dx$ instead of differential equation.
\end{remark}
\begin{remark}
    Due to the establishment of Proposition \ref{ijyuj90977}, then $v$ has a uniform-in-time upper bound in $\bar{\Omega }\times [0,T_{max} )$ in the sense of fractional derivative for $\rho (\cdot )$ satisfies (H0),(H1) and (H3) in Appendix \ref{367897}.
\end{remark}

\begin{lemma}\textsuperscript{\cite{jiang2022boundedness}}\label{lemmaiojd}
    Assume that $\rho (\cdot)$ satisfies (H0),(H1) and (H3). For any $1+p\in(0,l_{0}^{2})$, there exist time-independent constants $q=\frac{pl_{0}}{2}>0$ and $\delta _{0}=\delta _{0}(p,q)\in (0,1)$ such that
    \begin{align*}
        &\frac{(p+1)(p+2q)^{2}}{4p(1-\delta_{0})}\int _{\Omega }u^{1+p}\rho  ^{q-1}\left | {\rho  }' \right |^{2}\left | \nabla v \right |^{2}dx\\
&\leq q\int _{\Omega }\left ( (p+q+1)\left | {\rho  }'(v) \right |^{2}+\rho  {\rho  }'' \right )u^{1+p}\rho  ^{q-1}\left | \nabla v \right |^{2}dx
    \end{align*}
\end{lemma}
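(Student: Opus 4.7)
The plan is to reduce the claimed integral inequality to a pointwise algebraic inequality between coefficients, since every term appearing in both integrands shares the same non-negative factor. Concretely, on $\Omega$ the product $u^{1+p}\rho^{q-1}(v)|\nabla v|^{2}$ is non-negative (by (H0), $\rho>0$, and $u\geq 0$), so it suffices to establish the pointwise bound
\[
\frac{(p+1)(p+2q)^{2}}{4p(1-\delta_{0})}|\rho'(v)|^{2}\;\leq\; q(p+q+1)|\rho'(v)|^{2}+q\,\rho(v)\rho''(v).
\]
This is where hypothesis (H3) enters: it gives $\rho(v)\rho''(v)\geq l_{0}|\rho'(v)|^{2}$, so after dividing by $|\rho'(v)|^{2}$ (if $\rho'(v)\equiv 0$ the claim is trivial) it is enough to verify the purely numerical inequality
\[
\frac{(p+1)(p+2q)^{2}}{4p(1-\delta_{0})}\;\leq\;q(p+q+1+l_{0}).
\]

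Next I would substitute the given choice $q=pl_{0}/2$, so that $p+2q=p(1+l_{0})$ and $(p+2q)^{2}=p^{2}(1+l_{0})^{2}$. The inequality becomes
\[
\frac{(p+1)(1+l_{0})^{2}}{4(1-\delta_{0})}\;\leq\;\frac{l_{0}}{4}\bigl((2+l_{0})p+2(l_{0}+1)\bigr),
\]
equivalently
\[
(p+1)(1+l_{0})^{2}\;\leq\;(1-\delta_{0})\,l_{0}\bigl((2+l_{0})p+2(l_{0}+1)\bigr).
\]
A direct expansion of both sides at $\delta_{0}=0$ shows that
\[
l_{0}\bigl((2+l_{0})p+2(l_{0}+1)\bigr)-(p+1)(1+l_{0})^{2}=l_{0}^{2}-(p+1),
\]
so the $\delta_{0}=0$ version of the required bound is equivalent to $l_{0}^{2}\geq p+1$. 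The assumption $1+p<l_{0}^{2}$ furnishes a strict inequality, hence by continuity there exists $\delta_{0}=\delta_{0}(p,q)\in(0,1)$ small enough that the inequality still holds. Multiplying back through by the common factor and integrating over $\Omega$ yields the stated lemma.

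The only real obstacle is the algebraic verification of the identity $l_{0}((2+l_{0})p+2(l_{0}+1))-(p+1)(1+l_{0})^{2}=l_{0}^{2}-(p+1)$; once this is in hand, both the fact that $q$ can be chosen exactly as $pl_{0}/2$ and the admissibility range $1+p<l_{0}^{2}$ are exposed as sharp. Note that the constant $\delta_{0}$ plays only the role of a safety margin so that the inequality survives the division by $1-\delta_{0}$ in \eqref{4.1}; it is chosen a posteriori from the gap $l_{0}^{2}-(p+1)>0$, and depends only on $p$ and $q$ (equivalently on $p$ and $l_{0}$), not on time or on the solution.
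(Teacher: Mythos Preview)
Your argument is correct: reducing to a pointwise comparison of coefficients via the common non-negative weight $u^{1+p}\rho^{q-1}|\nabla v|^{2}$, invoking (H3) to replace $\rho\rho''$ by $l_{0}|\rho'|^{2}$, and then checking the resulting scalar inequality is exactly the right mechanism. Your algebraic identity
\[
l_{0}\bigl((2+l_{0})p+2(l_{0}+1)\bigr)-(p+1)(1+l_{0})^{2}=l_{0}^{2}-(p+1)
\]
is easily verified by expanding both sides, and it makes transparent why the threshold $1+p<l_{0}^{2}$ appears and why $q=pl_{0}/2$ is the natural choice. The existence of $\delta_{0}\in(0,1)$ then follows immediately from the strict positivity of $l_{0}^{2}-(p+1)$, as you say.

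As for comparison with the paper: the paper does not supply its own proof of this lemma at all. It is quoted directly from \cite{jiang2022boundedness} (Lemma~4.2 there), and the only addition in the present paper is the remark that the same pointwise inequality carries over when $|\nabla v|^{2}$ is replaced by $|D^{\alpha/2}v|^{2}$. Your proof is precisely the argument one finds in that reference, so there is no methodological divergence to discuss. One minor remark: the hypothesis (H1) listed in the statement plays no role in your argument, nor should it---only (H0) (for non-negativity of the weight) and (H3) (for the key pointwise bound) are used; and implicitly you need $p>0$ so that $q>0$ and the division by $4p(1-\delta_{0})$ is legitimate, which is consistent with the range $1+p\in(1,l_{0}^{2})$ actually used in Lemma~\ref{lemma4.3}.
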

\begin{remark}
    Through the above conclusions and references [\cite{jiang2022boundedness},Lemma 4.2], we further get the following inequality
    \begin{align}\label{lemma4.2}
    \nonumber   & \frac{(p+1)(p+2q)^{2}}{4p(1-\delta_{0})}\int _{\Omega }u^{1+p}\rho  ^{q-1}\left | {\rho  }' \right |^{2}\left | D^{\alpha/2 }v \right |^{2}dx\\
&\leq q\int _{\Omega }\left ( (p+q+1)\left | {\rho  }'(v) \right |^{2}+\rho  {\rho  }'' \right )u^{1+p}\rho  ^{q-1}\left | D^{\alpha/2 }v \right |^{2}dx
    \end{align}
\end{remark}

\end{appendices}


\bibliographystyle{elsarticle-num}
\bibliography{Ref}

\begin{thebibliography}{10}
\expandafter\ifx\csname url\endcsname\relax
  \def\url#1{\texttt{#1}}\fi
\expandafter\ifx\csname urlprefix\endcsname\relax\def\urlprefix{URL }\fi
\expandafter\ifx\csname href\endcsname\relax
  \def\href#1#2{#2} \def\path#1{#1}\fi

\bibitem{jiang2022boundedness}
J.~Jiang, \textit{Boundedness and exponential stabilization in a
  parabolic-elliptic Keller—Segel model with signal-dependent motilities for
  local sensing chemotaxis}, Acta Mathematica Scientia \textbf{42}~(3) (2022)
  825--846.

\bibitem{kemppainen2017representation}
J.~Kemppainen, J.~Siljander, R.~Zacher, \textit{Representation of solutions and
  large-time behavior for fully nonlocal diffusion equations}, Journal of
  Differential Equations \textbf{263}~(1) (2017) 149--201.

\bibitem{li2018cauchy}
L.~Li, J.-G. Liu, L.~Wang, \textit{Cauchy problems for Keller--Segel type
  time--space fractional diffusion equation}, Journal of Differential Equations
  \textbf{265}~(3) (2018) 1044--1096.

\bibitem{jiang2021weak}
Z.-w. Jiang, L.-z. Wang, \textit{Weak solutions to the Cauchy problem of
  fractional time-space Keller--Segel equation}, Mathematical Methods in the
  Applied Sciences \textbf{44}~(18) (2021) 14094--14113.

\bibitem{nguyen2023cauchy}
A.~T. Nguyen, N.~H. Tuan, C.~Yang, \textit{On Cauchy problem for fractional
  parabolic-elliptic Keller-Segel model}, Advances in Nonlinear Analysis
  \textbf{12}~(1) (2023) 97--116.

\bibitem{ahn2019global}
J.~Ahn, C.~Yoon, \textit{Global well-posedness and stability of constant
  equilibria in parabolic--elliptic chemotaxis systems without gradient
  sensing}, Nonlinearity \textbf{32}~(4) (2019) 1327.

\bibitem{fujie2021boundedness}
K.~Fujie, J.~Jiang, \textit{Boundedness of classical solutions to a degenerate
  Keller--Segel type model with signal-dependent motilities}, Acta Applicandae
  Mathematicae \textbf{176}~(1) (2021) 1--36.

\bibitem{wang2021parabolic}
Z.-A. Wang, \textit{On the parabolic-elliptic Keller-Segel system with
  signal-dependent motilities: a paradigm for global boundedness and steady
  states}, Mathematical Methods in the Applied Sciences \textbf{44}~(13) (2021)
  10881--10898.

\bibitem{azevedo2019existence}
J.~Azevedo, C.~Cuevas, E.~Henriquez, \textit{Existence and asymptotic behaviour
  for the time-fractional Keller--Segel model for chemotaxis}, Mathematische
  Nachrichten \textbf{292}~(3) (2019) 462--480.

\bibitem{cuevas2020time}
C.~Cuevas, C.~Silva, H.~Soto, \textit{On the time-fractional Keller-Segel model
  for chemotaxis}, Mathematical Methods in the Applied Sciences \textbf{43}~(2)
  (2020) 769--798.

\bibitem{zhu2020decay}
S.~Zhu, Z.~Liu, L.~Zhou, \textit{Decay estimates for the classical solution of
  Keller--Segel system with fractional Laplacian in higher dimensions},
  Applicable Analysis \textbf{99}~(3) (2020) 447--461.

\bibitem{huang2016well}
H.~Huang, J.-G. Liu, \textit{Well-posedness for the Keller-Segel equation with
  fractional Laplacian and the theory of propagation of chaos}, Kinetic \&
  Related Models \textbf{9}~(4) (2016) 715.

\bibitem{escudero2006fractional}
C.~Escudero, \textit{The fractional keller--segel model}, Nonlinearity
  \textbf{19}~(12) (2006) 2909.

\bibitem{biler2009two}
P.~Biler, G.~Wu, \textit{Two-dimensional chemotaxis models with fractional
  diffusion}, Mathematical methods in the applied sciences \textbf{32}~(1)
  (2009) 112--126.

\bibitem{zhang2019global}
W.~Zhang, Z.~Liu, L.~Zhou, \textit{Global existence and asymptotic behavior of
  classical solutions to a fractional logistic Keller--Segel system}, Nonlinear
  Analysis \textbf{189} (2019) 111624.

\bibitem{Ahmed2017A}
A.~Alsaedi, B.~Ahmad, M.~Kirane, \textit{A survey of useful inequalities in
  fractional calculus}, Fract. Calc. Appl. Anal. \textbf{20} (2017) 574--594.

\bibitem{2015Maximum}
A.~Alsaedi, B.~Ahmad, M.~Kirane, \textit{Maximum principle for certain
  generalized time and space fractional diffusion equations}, Q. Appl. Math.
  \textbf{73} (2015) 163--175.

\bibitem{alikakos1979application}
N.~D. Alikakos, \textit{An application of the invariance principle to
  reaction-diffusion equations}, Journal of Differential Equations
  \textbf{33}~(2) (1979) 201--225.

\bibitem{Kilbas2006}
A.~A. Kilbas, H.~M. Srivastava, J.~J. Trujillo, \textit{Theory and applications
  of fractional differential equations}, Vol. \textbf{204}, Elsevier, New York,
  2006.

\bibitem{li2018some}
L.~Li, J.-G. Liu, \textit{Some compactness criteria for weak solutions of time
  fractional PDEs}, SIAM Journal on Mathematical Analysis \textbf{50}~(4)
  (2018) 3963--3995.

\bibitem{fujie2020global}
K.~Fujie, J.~Jiang, \textit{Global existence for a kinetic model of pattern
  formation with density-suppressed motilities}, Journal of Differential
  Equations \textbf{269}~(6) (2020) 5338--5378.

\bibitem{de2012general}
A.~de~Pablo, F.~Quir{\'o}s, A.~Rodr{\'\i}guez, J.~L. V{\'a}zquez, \textit{A
  general fractional porous medium equation}, Commun. Pur. App. Math.
  \textbf{65} (2012) 1242--1284.

\bibitem{stein1970singular}
E.~M. Stein, \textit{Singular integrals and differentiability properties of
  functions}, Vol. \textbf{2}, Princeton university press, 1970.

\bibitem{caffarelli2007extension}
L.~Caffarelli, L.~Silvestre, \textit{An extension problem related to the
  fractional Laplacian}, Communications in partial differential equations
  \textbf{32}~(8) (2007) 1245--1260.

\bibitem{alikhanov2010priori}
A.~Alikhanov, \textit{A priori estimates for solutions of boundary value
  problems for fractional-order equations}, Differential equations
  \textbf{46}~(5) (2010) 660--666.

\bibitem{2001Fractional}
E.~G.~B. jlekova, \textit{Fractional Evolution Equations in Banach Spaces},
  Eindhoven University of Technology Ndhoven \textbf{14} (2001) 737--745.

\bibitem{2016Weak}
Y.~Zhou, L.~Peng, \textit{Weak solutions of the time-fractional Navier-Stokes
  equations and optimal control}, Computers \& Mathematics with Applications
  \textbf{73} (2017) 1016--1027.

\bibitem{hardy1928some}
G.~H. Hardy, J.~E. Littlewood, \textit{Some properties of fractional integrals.
  I.}, Mathematische Zeitschrift \textbf{27}~(1) (1928) 565--606.

\bibitem{bonforte2014quantitative}
M.~Bonforte, J.~L. V{\'a}zquez, \textit{Quantitative local and global a priori
  estimates for fractional nonlinear diffusion equations}, Adv. Math.
  \textbf{250} (2014) 242--284.

\bibitem{taylorremarks}
M.~Taylor, \textit{Remarks on fractional diffusion equations}, in:Chapter 6 of
  Lecture notes "Diffusion processes and other random processes".

\end{thebibliography}
\end{document}